\tikzset{
  treenode/.style = {shape=rectangle, rounded corners,
                     draw, align=center,
                     top color=white, bottom color=blue!20},
  root/.style     = {treenode, font=\Large, bottom color=red!30},
  env/.style      = {treenode, font=\ttfamily\normalsize},
  dummy/.style    = {circle,draw}
}
\newcommand{\NN}{\mathbb{N}}
\newcommand{\ZZ}{\mathbb{Z}}
\newcommand{\Mod}{\operatorname{Mod}}
\newcommand{\Map}{\operatorname{MCG}}
\newcommand{\Teich}{\operatorname{Teich}}
\definecolor{lightgrey}{gray}{.85}
\numberwithin{equation}{section}
\theoremstyle{plain}
\newtheorem{thm}{Theorem}[section]
\newtheorem{cor}[thm]{Corollary}
\newtheorem{prop}[thm]{Proposition}
\newtheorem{lem}[thm]{Lemma}
\newtheorem{rem}[thm]{Remark}
\newtheorem{definition}[thm]{Definition}
\def\moverlay{\mathpalette\mov@rlay}
\def\mov@rlay#1#2{\leavevmode\vtop{%
   \baselineskip\z@skip \lineskiplimit-\maxdimen
   \ialign{\hfil$\m@th#1##$\hfil\cr#2\crcr}}}
\newcommand{\charfusion}[3][\mathord]{
    #1{\ifx#1\mathop\vphantom{#2}\fi
        \mathpalette\mov@rlay{#2\cr#3}
      }
    \ifx#1\mathop\expandafter\displaylimits\fi}
\title{A Bers type classification of big mapping classes}
\author{Ara Basmajian}
\address[Ara Basmajian]{The Graduate Center, CUNY \\ 365 Fifth Ave., N.Y., N.Y., 10016, and   Hunter College, CUNY, 695 Park ave., N.Y.,N.Y., 10065, USA} 
 \email{abasmajian@gc.cuny.edu}
 \thanks{AB  supported by PSC CUNY Award 65245-00 53
and partially supported by  Simons  Collaboration Grant (359956, A.B.)}
 \author{Yassin Chandran}
\address[Yassin Chandran]{The Graduate Center, CUNY \\ 365 Fifth Ave., N.Y., N.Y., 10016, USA} 
\email{ychandran@gradcenter.cuny.edu}
\thanks{YC supported by  a CUNY pre-dissertation fellowship in 2022}
\keywords{Big mapping class group,  hyperbolic structure, infinite type surface, modular group, quasiconformal homeomorphism}
\subjclass[2020]{Primary 57K20, 20F65, 30F6014E20 ; Secondary 54C40, 53C22}
\begin{document}
\begin{abstract}  For an infinite type surface $\Sigma$, we consider the space of (marked) convex  hyperbolic structures  on $\Sigma$, 
denoted $H(\Sigma)$, with the Fenchel-Nielsen topology.  The (big) mapping class group  acts faithfully on this space
allowing us to  investigate a number of  mapping class group invariant subspaces of $H(\Sigma)$ which arise from various geometric properties 
(e.g. geodesic or metric completeness, ergodicity of the geodesic flow, lower systole bound, discrete length spectrum) of the hyperbolic structure. In particular, we  show  that the space of    geodesically complete  convex hyperbolic structures in $H(\Sigma)$    is  locally path connected, connected and decomposes naturally into Teichm\"uller subspaces.  The big mapping class  group of $\Sigma$ acts faithfully on this space  allowing us to  classify mapping classes into three types ({\it always quasiconformal, sometimes quasiconformal, and never quasiconformal})   in terms  of their   dynamics  on the Teichm\"uller subspaces. Moreover,  each  type contains  infinitely many mapping classes, and  the type is relative  to the underlying subspace of  $H(\Sigma)$ that is being considered. 

As an application of our work, we show that if the mapping class group of a general topological surface  $\Sigma$ is  algebraically isomorphic to the   modular  group of a  Riemann  surface
$X$,  then $\Sigma$ is of finite topological type and $X$ is homeomorphic to it.  Moreover, a big mapping class group can not act on any Teichm\"uller
   	space with orbits equivalent to modular group orbits.
\end{abstract}
\maketitle


\section{Introduction and results}

For finite   type surfaces, Teichm\"uller space  is the  space of marked hyperbolic structures  for which the mapping class group acts with the quotient  moduli space.   It  has been very successfully used to study deformations of hyperbolic structures, 3-manifolds, mapping class groups, as well as many other phenomena. Bers studied the action of $\Map(\Sigma)$ on $\mathcal{T}(\Sigma)$, and gave a different  proof of the celebrated Nielsen-Thurston classification using quasiconformal homeomorphisms  \cite{Bers}. For infinite type surfaces, there is no canonical choice of component of Teichm\"uller space. Indeed, there are uncountably many components that are fundamentally different  (see for example \cite{Ba1}, \cite{BaKim}, \cite{Ma2}). As a result, many earlier works have focussed  on a particular component by fixing a conformal structure, and the  
 action of the modular group (quasiconformal mapping class group). 
 
 In this work we take an expanded view by considering convex  hyperbolic structures on $\Sigma$. A {\it convex hyperbolic surface}  $X$ is  by definition  the interior of the convex core of  a geodesically complete hyperbolic surface. 
Examples of convex hyperbolic surfaces are illustrated in  Figures \ref{fig: Flute minus half-planes} and   \ref{fig: BiFlute}.
In Figure \ref{fig: Flute minus half-planes}  the convex hyperbolic surface 
(that is, the interior of its convex core) is a tight flute surface with half-planes deleted. In Figure  \ref{fig: BiFlute} the convex hyperbolic surface is the bi-infinite flute with its funnels deleted. In both figures the convex core is the red shaded region. 
 A {\it (marked) convex hyperbolic structure}  on $\Sigma$ is a  homeomorphism 
$f: \Sigma \rightarrow X$, where $X$  is a convex hyperbolic surface with the usual equivalence of two marked structures.  Such an equivalence class we usually denote $(X,f)$, and   the space of all convex   hyperbolic structures  with the Fenchel-Nielsen (product) topology is denoted
$H(\Sigma)$,  allowing us to set the groundwork for studying this space and  the associated effective action of the mapping class group on it.  Reminiscent of Bers approach to investigating  a mapping class we  consider to  what extent a big mapping class  has  a quasiconformal representative and use this to derive a classification of big mapping class elements.   
See Sections \ref{sec: Preliminaries} and \ref{sec: property $P$  subspaces} for definitions and details.

\begin{figure}[htb]\label{fig: Flute minus half-planes}
 \begin{center}
     \includegraphics[width=5.5in]{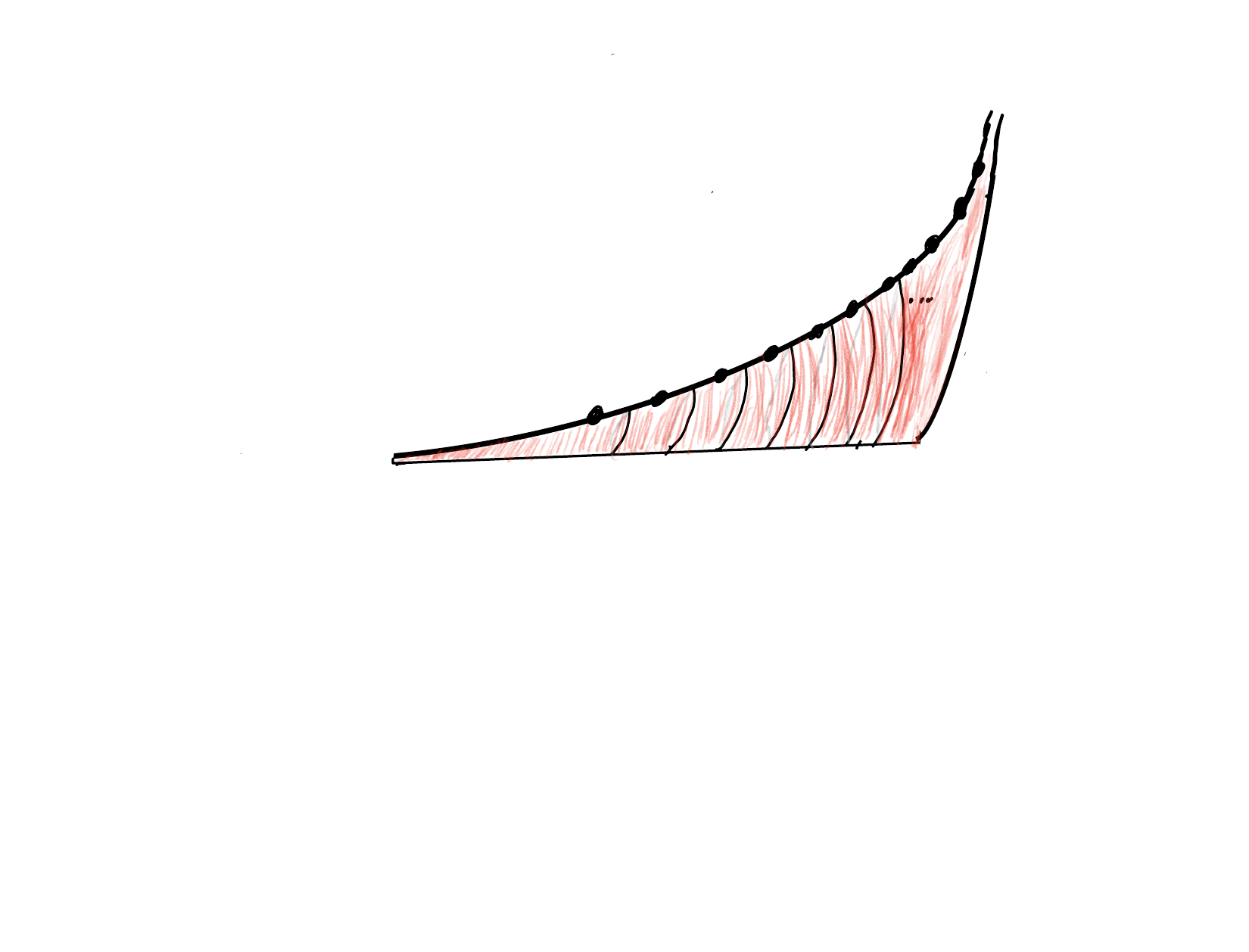}
     \vspace{-150pt}
    \caption{Tight flute minus half-planes}
  \end{center}
\end{figure}

\begin{figure}[htb] 
 \begin{center}
     \includegraphics[width=5.5in]{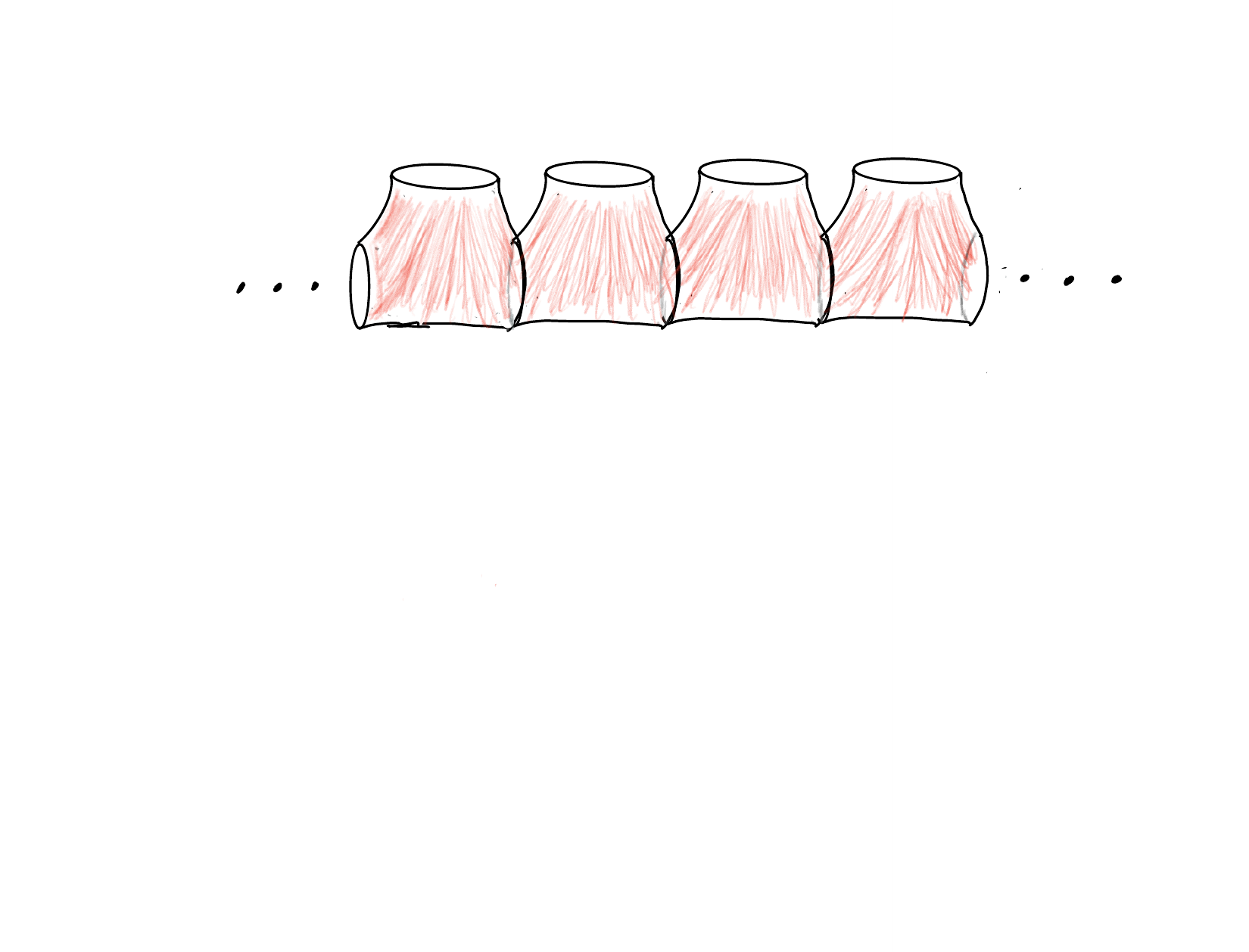}
     \vspace{-200pt}
    \caption{Bi-infinite flute minus funnels}
    \label{fig: BiFlute}
  \end{center}
\end{figure}

It is  a consequence of \cite{BaKim} and \cite{BaSa} that the Fenchel-Nielsen length coordinates even for   complete hyperbolic surfaces can be wildly different. For example, the length coordinates can grow  super exponentially fast  or  super exponentially small.  One motivating question (due to Hugo Parlier) is the following, 
\vskip10pt
\noindent\textbf{Question:}	Does there exist   a continuous deformation between any two geodesically complete convex  hyperbolic structures through geodesically complete convex  hyperbolic structures. \vskip10pt

We note that F-N coordinates impose a straight line geometry on
$H(\Sigma)$. However  straight line paths between two geodesically complete convex structures do  not necessarily remain in the subspace of geodesically complete convex structures in $H(\Sigma)$.
  See Theorem \ref{thm: non-convexity}.  

 Let $P$ be a   quasiconformally invariant  property. We define
  the following subspaces

$$
\mathcal{C}_{P}=
\{(X,h) \in H(\Sigma): X \text{ has property P, all isolated planar ends are cusps}\}.
$$
$$
\mathcal{C}=
\{(X,h) \in H(\Sigma): \text{X is geodesically complete}\}.
$$
$$
\mathcal{C}^{\prime}=
\{(X,h) \in H(\Sigma): \text{X is metrically complete}\}.
$$
In the above,  $X$ is {\it geodesically complete}  if every geodesic extends infinitely in both directions.  $X$ is {\it metrically complete}  if its geodesic completion does not require adding $\text{half-planes}$.  Every convex hyperbolic structure has a unique extension to a geodesically complete structure. 

The subspace $\mathcal{C}$ is in fact  the set of $(X,f)$ where the isolated planar ends of $X$ are cusps and  for  any other end there is a  sequence of simple closed geodesics whose distances from any point in $X$ go to $\infty$.  Note  that being a geodesically complete hyperbolic structure or a metrically complete  hyperbolic structure is a quasiconformally  invariant property (see section 
\ref{sec: property $P$  subspaces} for details).

 \medskip\noindent\textbf{Theorem A.}[Theorem  \ref{thm: property P connectivity}]
 \textit{The subspace
   $\mathcal{C}_{P} \subset H(\Sigma)$ is   locally path connected,  connected and the subspace of metrically complete structures $\mathcal{C}^{\prime} \subset H(\Sigma)$ is  path connected. 
 In particular, the subspace of geodesically complete structures 
  $\mathcal{C} \subset H(\Sigma)$ is locally path connected, connected. }
\medskip

A {\it Teichm\"uller subspace}  is the set of quasiconformal deformations of a fixed marked structure.  The {\it Teichm\"uller space corresponding to 
the Riemann surface $X$},  denoted $\mathcal{T}(X)$, is the set of equivalence classes of quasiconformal deformations of $X$. We note here  that Teichm\"uller subspaces are defined for all convex structures,  whereas  by our convention a Teichm\"uller  space is only defined for a geodesically complete convex hyperbolic structure. See section  \ref{sec: QC homeomorphisms and Teich  subspaces} for the definition and  properties.

 \medskip\noindent\textbf{Corollary B.}[Corollary \ref{cor: disjoint union of Teichspaces}]
\textit{$H(\Sigma)$  as well as  the subspaces $\mathcal{C}_{P}$,  $\mathcal{C}$, 
 and $\mathcal{C}^{\prime}$   are partitioned into the disjoint union of  Teichm\"uller subspaces. In addition,  if  a Teichm\"uller subspace is comprised of geodesically complete structures    then there is an injective  continuous  mapping of the  Teichm\"uller space to the Teichm\"uller subspace. In particular, $\mathcal{C}$ is partitioned into the disjoint union of 
 Teichm\"uller spaces- one for each qc-type of Riemann surface structure on 
 $\Sigma$.}
\medskip

The {\it mapping class group of} $\Sigma$ is  
$$\Map(\Sigma) := \text{Homeo}^{+}(\Sigma)/\text{Homeo}_{0}(\Sigma)$$
where $\text{Homeo}^{+}(\Sigma)$ is the group of orientation preserving homeomorphisms of $\Sigma$,  and $\text{Homeo}_{0}(\Sigma)$ the normal subgroup of homeomorphisms isotopic to the identity.  
See \cite{Nick}.

 A mapping class $\phi$  is {\it quasiconformal}  if   there is a convex  hyperbolic structure $(X,f)$  and a quasiconformal mapping 
	$q: X \rightarrow X$ so that  the following diagram commutes up to isotopy 
	
\[ \begin{tikzcd} \label{dia: commuting diagram}
	\Sigma \arrow{r}{\phi} \arrow[swap]{d}{f} & \Sigma \arrow{d}{f} \\%
	X \arrow{r}{q}& X
\end{tikzcd}
\]

\subsection*{Definition:}

	Let  $\mathcal{D} \subseteq  H(\Sigma)$ be a subspace which is path connected and invariant under the mapping class group.  Then for $\phi \in \Map(\Sigma)$ we say that $\phi$ relative to $\mathcal{D}$ is
	\begin{itemize}
	\item  {\it always quasiconformal} if for any convex  hyperbolic structure $(X,f) \in \mathcal{D}$, $\phi$ admits a quasiconformal representative.
	\item {\it sometimes quasiconformal} if there exists  $(X,f), (Y,g) \in \mathcal{D}$ such that $\phi$ admits a quasiconformal representative with respect to $(X,f)$ but not $(Y,g)$.
	\item {\it never quasiconformal} if there does not exist  $(X,f) \in \mathcal{D}$ for which $\phi$ admits a quasiconformal representative. 
	\end{itemize}

Note that the three classes  are mutually exclusive and each $\phi \in \Map(\Sigma)$ falls into exactly one of the classes. We sometimes refer to this trichotomy as the three possible types of a mapping class. 
 We are mostly interested in the classification when the subspace 
 $\mathcal{D}$ is  either  $\mathcal{C}_{P}$, $\mathcal{C}$,  $\mathcal{C}^{\prime}$  or $H(\Sigma)$ itself. The  trichotomy is invariant under conjugation and therefore descends to any quotient of the mapping class group.

\subsection*{Action of $\Map$ on $H(\Sigma)$:}

The mapping class group, denoted  $\Map(\Sigma)$, acts on $H(\Sigma)$  by precomposition of the marking map, 
$$\phi \cdot (X,f) := (X, f \circ \phi^{-1}).$$

We have the following theorem

 \medskip\noindent\textbf{Theorem C.}[Theorem \ref{thm: type classification}]
 \textit{ $\Map(\Sigma)$ acts faithfully as a group of homeomorphisms of 
$H(\Sigma)$,  keeps invariant $\mathcal{C}_{P} \subset H(\Sigma)$, and 
permutes  the set of Teichm\"uller subspaces of $\mathcal{C}_{P}$. 
Moreover, for $\phi \in \Map(\Sigma)$ }
\begin{itemize}
\item \textit{$\phi$ is always quasiconformal rel. $\mathcal{C}_{P}$  if and only if  $\phi$ keeps invariant  every Teichm\"uller subspace in $\mathcal{C}_{P}$  if and only if $\phi$ is finitely  supported. }

\item \textit{$\phi$ is sometimes  quasiconformal rel. $\mathcal{C}_{P}$ if and only if there exists at least one Teichm\"uller subspace  in $\mathcal{C}_{P}$  that is kept invariant and at least one in $\mathcal{C}_{P}$ that is not kept invariant. }

\item \textit{$\phi$ is never quasiconformal rel. $\mathcal{C}_{P}$ if and only if   there is no Teichm\"uller subspace in  $\mathcal{C}_{P}$ kept invariant by 
 $\phi$.}
\end{itemize}
\medskip

We remark that the almost qc and never qc  types  contain   uncountably many conjugacy classes  of elements. 
Though the above theorem is phrased in terms of the action of $\Map(\Sigma)$ on $\mathcal{C}_{P}$, in fact the same statement is true with 
$\mathcal{C}_{P}$ replaced by $H(\Sigma)$ and  finitely supported  replaced by compactly supported.  Since being geodesically complete is a quasi-invariant property  the above theorem applies to the subspace of  geodesically complete structures,  $\mathcal{C}$.

A feature of this classification is that the type of the element is relative to the subspace the mapping class group acts on. 
 In Section \ref{sec: rel. trichotomies}, we illustrate this with  a number of examples, which  in fact show how the context of which subspace $\Map$ acts  influences the analytic characterization  of the mapping class. 
 
We denote  the {\it quasiconformal mapping class group} of a Riemann surface $X$
 by $\Mod(X)$. As  an application  of our work we show that for any infinite type surface $\Sigma$ and for $X$ any hyperbolic surface, there does not exist a group  isomorphism $\rho: \Map(\Sigma) \to \Mod(X)$. We emphasize that $\Sigma$ and $X$ need not be related, allowing  us to conclude that there 
   is no natural action of a big mapping class group on \textit{any} Teichm\"uller space.
 
   \medskip\noindent\textbf{Corollary D.}[Corollary \ref{cor: MCG does not act}]
   \textit{   Suppose $\Sigma$ is a  general topological surface of negative Euler characteristic.  If $\Map(\Sigma)$ is isomorphic to $\Mod(X)$ for some Riemann surface
   	$X$, then $\Sigma$ has finite topological type and X is homeomorphic to $\Sigma$.  Moreover, a big mapping class group can not act on any Teichm\"uller
   	space with orbits equivalent to modular group orbits.  }
   \medskip

\subsection{Some background:}

Examples of infinite type surfaces include, the  tight flute surface, the  Loch Ness Monster,
or the  Cantor Surface  (See Figure  \ref{fig:Surfaces}).

\begin{figure}[htb]\label{fig:Surfaces}
 \begin{center}
     \includegraphics[width=5.5in]{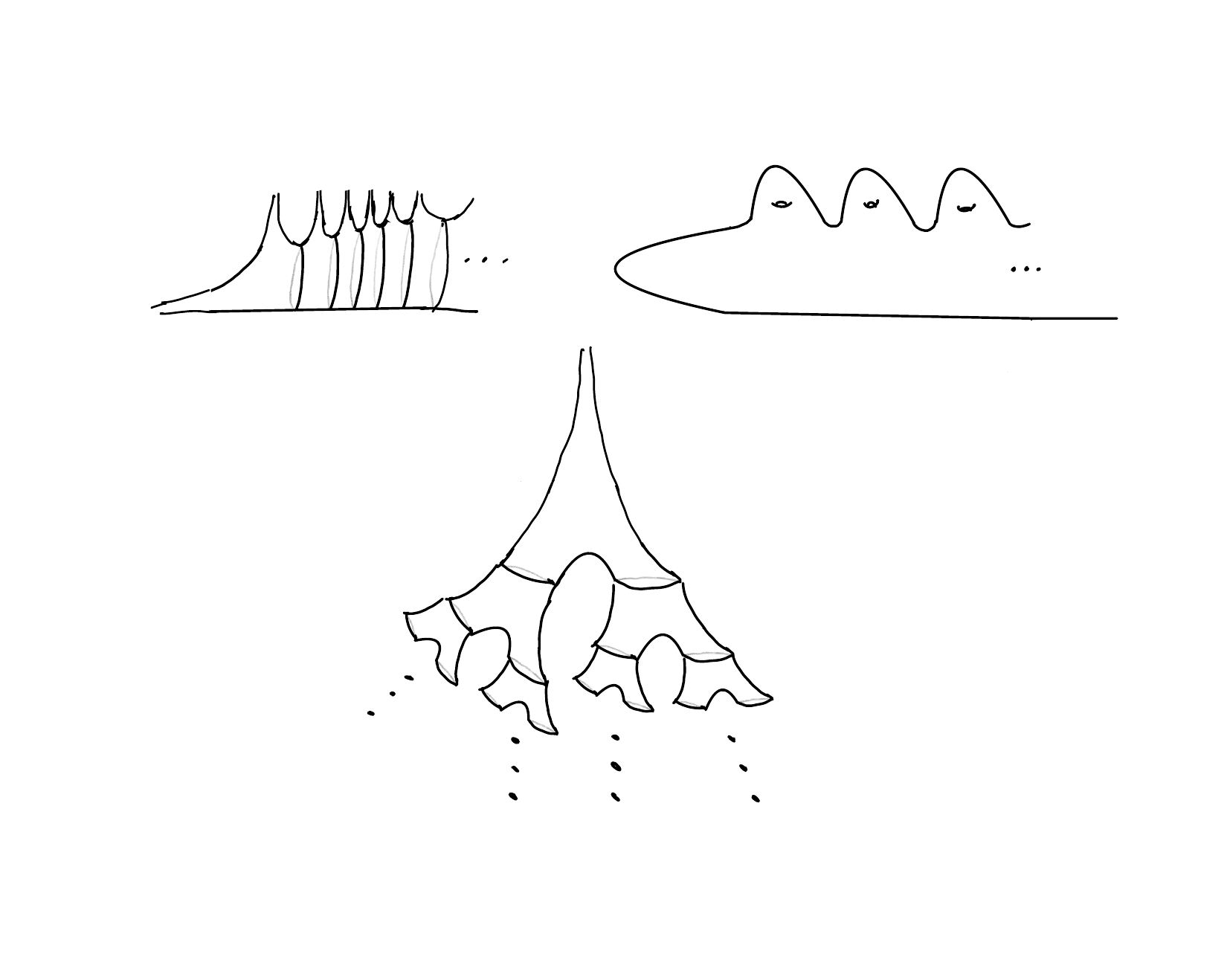}
     \vspace{-20pt}
    \caption{Flute, Loch Ness Monster, and Cantor tree Surface}
  \end{center}
\end{figure}

In this paper, we introduce the  space of (marked) convex  hyperbolic structures associated to $\Sigma$, denoted  $H(\Sigma)$,  in order to study simultaneously all hyperbolic structures on $\Sigma$ as well as organize big mapping classes into a trichotomy according to their action on the space. We also use this space to study continuous deformations of various hyperbolic structures (Theorem \ref{thm: property P connectivity}).
 
$H(\Sigma)$ by definition is completely metrizable and  has the  property that
 for any  convex hyperbolic structure $(X,f)$ there is an associated 
 {\it Teichm\"uller subspace} through $(X,f)$, denoted  $\mathcal{T} (X,f)$. 
 Moreover, if the marked hyperbolic structure is complete then there 
 is a continuous injection of $\mathcal{T}(X) \hookrightarrow H(\Sigma)$ whose image is $\mathcal{T} (X,f)$.

For  $P$ a quasiconformally invariant property, the possible subspaces 
$\mathcal{C}_{P}$ are rich and quite varied. 
Some examples of historically studied quasi-invariant properties include: geodesic completeness (\cite{Ba2} \cite{BaSa}),  ergodicity of the  geodesic flow (equivalently not carrying a Green's function) \cite{BHS},  discreteness of the  length spectrum  \cite{BaKim}, admitting a bounded pants decomposition (\cite{Shi} \cite{Kin}),  
having a countable quasiconformal mapping class group \cite{Ma2},  bounded injectivity radius \cite{BPV}, and    being of the first kind. Quasi-invariant properties of the length spectrum are also a source of possible subspaces of hyperbolic structures. 

An important tool we use in proving the connectivity results is the following

 \medskip\noindent\textbf{Zig-Zag. }[Lemma \ref{lem: zig-zag path}]
\textit{Given   $(X,f)$ and  $(Y, g)$ in  $\mathcal{C}_{P} \subset H(\Sigma)$, there exists a  zig-zag path $L:[0,1] \to  \mathcal{C}_{P}$ from  $(X,f)$ to $(Y,g)$,
where 
	$L(t)$ for any $t \in [0,1)$ is in the same Teichm\"uller subspace as $L(0) =(X,f)$.}
\medskip

\noindent{{\bf Organization of the sections:}} 
The  sections in the sequel are organized in the following way. In  section 
\ref{sec: Preliminaries} we set  preliminaries and notation.  In section 
 \ref{sec: property $P$  subspaces} we discuss end geometry. 
In section \ref{sec: QC homeomorphisms and Teich  subspaces} we define Teichm\"uller subspaces of the space of hyperbolic structures,  subspaces  of hyperbolic structures that have property $P$,  and 
prove connectivity properties. In section 
\ref{sec: The space of complete structures is not convex} we show that 
the space of geodesically complete structures is not convex.
In section  \ref{sec: MCG action}, we introduce the action of the mapping class group on the space of hyperbolic structures.
In section  \ref{sec: trichotomy} we introduce the trichotomy for big mapping class groups and prove that always 
$qc$-mappings are compactly supported. 
In section  \ref{sec: Constructing never qc maps}, we construct never 
$qc$ mappings. In section \ref{sec: MCG action 
on Teichmuller spaces} we show that a big mapping class group is algebraically distinct from any modular group and does not act naturally on any Teichm\"uller space.  
Finally, in section \ref{sec: rel. trichotomies}, we investigate 
the type of a mapping class  relative to a subspace. Among the examples we construct is an infinite nested set of subspaces in $H(\Sigma)$ for which each pair of adjacent subspaces has a mapping class that changes type when going from one to the other.

{\noindent{\bf Acknowledgement:}}  The authors would like to thank Chris Bishop, Hugo Parlier, Dragomir \v{S}ari\'{c},  Ser Peow Tan, Perry Susskind, and Nick Vlamis  for helpful conversations. Parlier asked   the initial question of connectivity.


\section{Preliminaries}
\label{sec: Preliminaries}

\subsection{Basics and notation}
As basic references we refer to  the books,
 Buser \cite{Bus} for hyperbolic geometry basics and   Farb-Margalit  \cite{F-M} for both hyperbolic geometry and the basics on the mapping class group. For the theory of quasiconformal mappings we refer to 
 \cite{Ahl, Vis},   and for Teichm\"uller theory \cite{F-M, Fl-Ma}.


\begin{center}
	\begin{tabular}{| |l  c |  }
		\hline \hline 
		{\bf Definition, Notation} & {\bf Section}  \\
		\hline
		\hline 
		
		\hline
		Topological surface, $\Sigma$ &  \\
		\hline

		Fenchel-Nielsen coordinates, $\vec{z} = (z_1, z_2, \cdots)$  & 
		\ref{sec: Preliminaries}  \\
		\hline
		(Marked) convex hyperbolic structure, $(X,f)$ &  2  \\
		\hline
		Geodesically complete  convex structure & \ref{sec: Preliminaries}    \\
		\hline
		Metrically  complete  convex structure & \ref{sec: Preliminaries}    \\
		\hline
		Convex core of $X$ & \ref{sec: Preliminaries}   \\
		\hline
		End geometries on $\Sigma$  &\ref{sec: property $P$  subspaces}   \\
		\hline
		Space of marked (convex) hyperbolic structures, $H(\Sigma)$  & 
		\ref{sec: QC homeomorphisms and Teich  subspaces}  \\
		\hline 
		Space of geodesically complete structures,  $\mathcal{C}$ & 
		\ref{sec: QC homeomorphisms and Teich  subspaces}  \\
		\hline 
		Space of metrically complete structures, $\mathcal{C}^{\prime}$ & \ref{sec: QC homeomorphisms and Teich  subspaces} \\
		\hline
		Space of convex structures with property $P$, 	$\mathcal{C}_{P} \subset H(\Sigma)$  & \ref{sec: QC homeomorphisms and Teich  subspaces}  
	   \\

		\hline
		Teichm\"uller space of $X$, $\mathcal{T}(X)$ & 
		\ref{sec: QC homeomorphisms and Teich  subspaces}    \\
		\hline
		Teichm\"uller subspace through $(X,h)$,  $\mathcal{T} (X,h)$  & 
		\ref{sec: QC homeomorphisms and Teich  subspaces}  \\
				\hline

		Mapping class group of $\Sigma$, $\Map=\Map(\Sigma)$ & 
		\ref{sec: MCG action}  \\
		\hline
		Modular group of the Riemann surface $X$, $\text{Mod}(X)$& 
		\ref{sec: MCG action on Teichmuller spaces}
 \\
		\hline
		
		\hline

	\end{tabular} 
\end{center}
\label{Table:notation}

\subsection{Hyperbolic structures}
Denoting the hyperbolic plane by $\mathbb{H}$, a  {\it geodesically complete hyperbolic surface}  is by definition  $\mathbb{H}/G$, where $G$ is a torsion-free discrete subgroup of orientation preserving isometries of 
$\mathbb{H}$. Denote the limit set of $G$ by 
$\Lambda (G)$. The {\it convex core}  of  $\mathbb{H}/G$ is the  projection of the convex hull of $\Lambda (G)$ to the quotient. It  is the smallest geodesically convex subspace of $\mathbb{H}/G$ which contains all the homotopy classes of curves. The complement of the convex core consists of  the (possibly empty) union of  funnels and half-planes (see \cite{Ba2} and \cite{BaSa}).  
The convex core uniquely determines its completion to a geodesically complete hyperbolic surface. 
A {\it convex  hyperbolic surface} $X$  is  by definition  the interior of the convex core of  a geodesically complete hyperbolic surface
(see Figures  \ref{fig: Flute minus half-planes} and   \ref{fig: BiFlute}).  $X$  is said to be 
{\it geodesically complete}  if every geodesic can be extended infinitely far in both directions. Equivalently,  $X$ is equal to its geodesic completion. 
The convex hyperbolic surface $X$ is said to be {\it metrically complete} if it satisfies one of the following equivalent conditions:

\begin{itemize}
\item  the  geodesic completion of $X$ consists  of  adding a (possibly empty) union of funnels 
\item the   geodesic completion of $X$ does not require the addition of 
half-planes 
\item  the boundary components of the metric closure of $X$  are compact. 
\end{itemize}

Fix $\Sigma$ an infinite-type topological (orientable) surface.
A {\it (marked) convex  hyperbolic structure}  on $\Sigma$ is an orientation preserving homeomorphism 
$f: \Sigma \rightarrow X$, where $X$  is a convex hyperbolic surface. 
Two  such  homeomorphisms 
$f_1:\Sigma \rightarrow X_2$,  and  $f_2:\Sigma \rightarrow X_2$ are said  to be equivalent (denoted $f_1 \sim f_2$),  if there exists an isometric  mapping 
$c:X_1 \rightarrow X_2$ so that the following diagram commutes up to homotopy,

\begin{equation}\label{diagram: commuting up to homotopy}
	\begin{tikzcd} 
		\Sigma  \arrow{r}{f_1} \arrow[swap]{dr}{f_2} & X_1 \arrow{d}{c} \\
		& X_2
	\end{tikzcd}
\end{equation}

We denote the equivalence class of $f :\Sigma \rightarrow X$ by $(X,f)$.
  The  space of all (marked)  convex hyperbolic structures  on $\Sigma$ is 

$$H(\Sigma):= \{(X,f): X \text{ a convex hyperbolic surface} \}.$$

\begin{rem}
	If when defining $H(\Sigma)$ we did not require that $X$ is a convex hyperbolic surface and instead adopted the standard convention used for finite-type surfaces,  that $X = \mathbb{H}/ G$ for some discrete torsion-free subgroup $G$ of isometries of $\mathbb{H}$, then $H(\Sigma)$ would no longer contain all hyperbolic structures carried by $\Sigma$. For $\Gamma$ a pants decomposition of $\Sigma$ and  any choice of Fenchel-Nielsen coordinates determining  $X$ so that $X-C(X)$ contains half-planes, the marking map fails to be a homeomorphism. To see this must be so, note that  $\Sigma - \Gamma$ is a union of pairs of pants   but $X -  f(\Gamma)$ contains half-planes.
\end{rem}

\noindent We say that $(X,f)$ is a {\it geodesically complete structure} if $X$ is a geodesically complete  convex hyperbolic surface, and that $(X,f)$ is  a {\it metrically complete structure}  if $X$ is a metrically complete  convex hyperbolic surface.

For $\gamma$ either a  closed curve or an arc between two simple closed curves on $\Sigma$ we define the length function 
$\ell_{\gamma} : H(\Sigma) \rightarrow \mathbb{R}_{\geq 0}$ of a  
marked structure $(X,f)$  to  be the length of the geodesic in the free homotopy class of  $f(\gamma)$. If $\gamma$ is an arc  between boundary  curves then the homotopy is relative to the boundary. A simple closed curve is said to be {\it essential} if it is not homotopic to a point or boundary parallel.
A {\it peripheral}  simple closed curve is one that bounds a puncture of 
$\Sigma$.

We will put {\it Fenchel-Nielsen  coordinates} (FN-coordinates)   on  $H(\Sigma)$. To that end,  suppose $\Gamma$  is a topological pants decomposition of $\Sigma$  with a choice of seams on each nonperipheral pants curve on 
$\Sigma$, and let $\{\gamma_i \}$ be  an ordered list of the pants curves.  We assign to the convex hyperbolic structure   $(X,f)  \in H(\Sigma)$  the FN-coordinates
$(\ell_{f(\gamma_i)}(X),  t_{f(\gamma_i)}(X))$
endowed    with the product topology thus making  $H(\Sigma)$ homeomorphic to 
$(\mathbb{R_{+}}\times \mathbb{R})^{\infty}$. We remark here that 
the image of the pants curves when straightened are not necessarily a geometric pants decomposition for the geodesic completion of $X$. Note that for peripheral coordinates we stipulate that the twist parameter is equal to zero.
As this is a countable product of the real numbers it is  metrizable. For convenience we  use the following metric for 
FN-coordinates which induces the product topology.
For $(\ell_n, t_n), (\ell^{\prime}_n,t^{\prime}_n)  \in  (\mathbb{R_{+}}\times \mathbb{R})^{\infty}$,

\begin{equation}
	d((\ell_n,t_n),(\ell^{\prime}_n,t^{\prime}_n))=
	\sum_{i=1}^{\infty}  \frac{1}{2^i} \left[\frac{ | \ell_n -\ell_n^{\prime}|}{(1+  |\ell_n -\ell_n^{\prime}|)}+
	\frac{ |t_n - t_n^{\prime}|}{(1+  |t_n -t_n^{\prime}|)}\right].
\end{equation}

To ease notation,  we sometimes use complex coordinates. Let 
$\text{U} \subset \mathbb{C}$ denote the upper half-plane. Then for points in FN-coordinates we sometimes use the notation,
$\vec{z}=(z_1,z_2,z_3,...)$ for a point in the infinite product $\text{U}^{\infty}$.

We emphasize that in  this paper we exclusively use the product topology on the Fenchel-Nielsen coordinates. For previous works considering the product topology on such a space see   \cite{Ba2},  \cite{Ba1}, and \cite{BaKim}.

\begin{lem}
	The topology on (any subspace of) $H(\Sigma)$ is independent of the choice of pants decomposition. In particular, given two pants decompositions $\Gamma_1, \Gamma_2$, and their induced coordinate spaces $H_1, H_2$ the identity map on the space of marked hyperbolic structures induces a homeomorphism between the coordinate spaces.  If $\gamma$ is the homotopy class of a closed curve then the  length function 
	$\ell_{\gamma} : H(\Sigma) \rightarrow \mathbb{R}_{\geq 0}$ is a continuous function. 
\end{lem}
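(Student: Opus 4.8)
The plan is to reduce everything to the universal property of the product topology: a map into a space carrying the product topology is continuous if and only if each of its coordinate functions is continuous. Writing $\iota\colon H_1 \to H_2$ for the identity on the underlying set of marked structures, the coordinate functions of $\iota$ are precisely the $\Gamma_2$--Fenchel--Nielsen coordinates, namely the lengths $\ell_{\delta_j}$ and twists $t_{\delta_j}$ of the curves $\delta_j \in \Gamma_2$, now regarded as functions on $H_1$. Thus it suffices to prove that each of these is a continuous function of the $\Gamma_1$--coordinates; continuity of $\iota^{-1}$ then follows by interchanging the roles of $\Gamma_1$ and $\Gamma_2$, and the final assertion about $\ell_\gamma$ is the special case of a single length coordinate. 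The whole lemma therefore rests on one principle: each length or twist coordinate is a continuous function of only \emph{finitely many} $\Gamma_1$--coordinates.

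First I would establish finite dependence for lengths. Let $\gamma$ be any closed curve (for the final statement) or a curve $\delta_j \in \Gamma_2$. Since a pants decomposition is a locally finite family of disjoint simple closed curves and $\gamma$ is compact, $\gamma$ meets only finitely many curves of $\Gamma_1$, hence lies in a finite subsurface $S$ obtained as the union of the finitely many $\Gamma_1$--pants it enters. After straightening, the $\Gamma_1$--curves bounding $S$ are geodesics, so $S$ is totally geodesic along its boundary and therefore convex; consequently the closed geodesic in $X$ representing the free homotopy class of $\gamma$ lies entirely in $S$. Hence $\ell_\gamma(X)$ equals the corresponding geodesic length computed inside the finite-type surface-with-geodesic-boundary $S$, whose hyperbolic metric is determined by the lengths of the $\Gamma_1$--curves in $S$ together with the twists of the $\Gamma_1$--curves interior to $S$ --- finitely many coordinates in all. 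By the classical theory of finite-type surfaces this length is a real-analytic, in particular continuous, function of those coordinates, being $2\cosh^{-1}\!\big(\tfrac12\lvert\tr\rho(\gamma)\rvert\big)$ for the holonomy $\rho$, which depends real-analytically on the Fenchel--Nielsen data of $S$.

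Then I would treat the twist coordinates $t_{\delta_j}$. The twist along $\delta_j \in \Gamma_2$, measured relative to the chosen seams, is an intrinsic geometric quantity of the union of the two $\Gamma_2$--pants adjacent to $\delta_j$; this union is compact and so is contained in a finite $\Gamma_1$--subsurface $S_j$, whose metric again depends only on finitely many $\Gamma_1$--coordinates. The passage from the $\Gamma_1$--coordinates of $S_j$ to the pair $(\ell_{\delta_j}, t_{\delta_j})$ is exactly a change of Fenchel--Nielsen coordinates on the finite-type Teichm\"uller space of $S_j$, which is a real-analytic diffeomorphism by the classical theory; in particular $t_{\delta_j}$ is a continuous function of finitely many $\Gamma_1$--coordinates. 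Composing these finite-dependence facts with the continuous coordinate projections of the product $H_1 \cong (\mathbb{R}_+\times\mathbb{R})^{\infty}$ shows that every coordinate function of $\iota$ is continuous, so $\iota$ is continuous; by symmetry so is $\iota^{-1}$, and $\iota$ is a homeomorphism. The continuity of $\ell_\gamma$ is precisely the content of the second paragraph.

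The main obstacle is the justification that only finitely many $\Gamma_1$--coordinates enter, and this hinges on two geometric points that must be stated carefully: the local finiteness of the pants decomposition, so that a compact curve crosses finitely many pants curves, and the convexity of a subsurface with geodesic boundary, so that the relevant geodesic representative and the seam data defining the twist are confined to a finite subsurface and never feel the infinite-type geometry or the boundary twists of $S$. Once finite dependence is secured, continuity is an immediate appeal to the classical real-analyticity of length functions and of changes of Fenchel--Nielsen coordinates on finite-type Teichm\"uller spaces, together with the continuity of projections in the product topology.
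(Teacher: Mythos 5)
Your proposal is correct and follows essentially the same route as the paper's proof: reduce each $\Gamma_2$-length and twist (and each $\ell_\gamma$) to a function of the finitely many $\Gamma_1$-coordinates of a finite-type subsurface containing the relevant curves, then invoke the classical finite-type theory for continuity. The only cosmetic difference is that you phrase continuity via the universal property of the product topology while the paper checks sequential convergence coordinate-by-coordinate, and you spell out the convexity/locally-finite details that the paper leaves implicit.
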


\begin{proof}
	Let $\Gamma_1$ and $\Gamma_2$ be two pants decompositions of $\Sigma$, and let $H_1, H_2$ be the corresponding coordinate spaces each homeomorphic to $\prod (\mathbb{R}_{>0}, \mathbb{R})$. Identify each coordinate with the upper half-plane. A sequence in $H_1$ converges if each coordinate converges. If $(X_i, f_i)$ converges to $(X,f)$ in $H_1$ then each coordinate $z_{\gamma, i}$ converges to $z_\gamma$ for each $\gamma \in \Gamma_1$. Each $\alpha \in \Gamma_2$ is contained in a finite subsurface $S$ whose boundary is a finite collection of curves in $\Gamma_1$. Since the Fenchel-Nielsen coordinates on $S$ converge, the corresponding length, and twist parameter $t_\alpha$ also converges.  
	
	The fact that $\ell_{\gamma}$ is continuous follows from the fact that 
	$\gamma$ is contained in topologically finite subsurface $\Sigma^{\prime} \subset \Sigma$, and hence   $\ell_{\gamma}$ depends on only finitely many Fenchel-Nielsen coordinates. Continuity now follows 
	(see \cite{F-M}).
\end{proof}


\section{End geometry of a hyperbolic surface}
\label{sec: property $P$  subspaces}

Denote the end topology of an orientable topological surface $\Sigma$ by $\mathcal{E}(\Sigma)$.
We give a brief description here of the space of ends. For more details on ends  and other terminology we refer the reader to \cite{BaSa}.  Let 
$\{\Sigma_n \}$  be a compact exhaustion of $\Sigma$. An {\it end} of 
$\Sigma$ is a decreasing sequence  of complementary connected components of 
$\{\Sigma-\Sigma_n \}$ given by 
$$
\{\mathcal{U}_{1} \supset \mathcal{U}_{2}  \supset \mathcal{U}_{3} \supset ...\}.
$$
The {\it space of ends}  inherits a natural topology from the topology of $\Sigma$ which does not depend on the choice of compact exhaustion.  
  A {\it non-planar end}  is an end for which $\mathcal{U}_{n}$ has infinite genus for all $n$. Otherwise, the end is said to be planar. The non-planar ends form a closed subspace of the space of ends. 
 It is well-known (\cite{Ri}) that the genus and the double topological space (non-planar ends, ends) determines the topology of $\Sigma$. 
Moreover, a  homeomorphism  between topological surfaces 
$f: \Sigma_1 \rightarrow \Sigma_2$ induces a homeomorphism $f_{\ast}: \mathcal{E}(\Sigma_1)\rightarrow \mathcal{E}(\Sigma_2)$.

Let  $e=\{\mathcal{U}_{1} \supset \mathcal{U}_{2}  \supset \mathcal{U}_{3} \supset ...\}$  be an end of  the topological  surface $\Sigma$, and
 $(X,f) \in H(\Sigma)$. Recalling that $X$ is a convex hyperbolic surface,  the possible geometries  of the end  $f_{*}(e)$ are either  
 \begin{itemize}
 \item  a cusp  if  $f(\mathcal{U}_{i})$ for some $i$   is conformal to  a punctured  disc in  $X$
 \item a funnel   if $f(\mathcal{U}_{i})$ for some $i$   is contained in a collar neighborhood about a compact  boundary component of the metric closure   of $X$
 \item for every  sequence of simple closed curves  $\{\gamma_{i}\}$ in 
   $\Sigma$ for which  $\gamma_{i} \rightarrow e$,  the straightened
   $X$-geodesics $\{f(\gamma_i)\}$ leave every compact subset of $X$
 \item there exists a   sequence of simple closed curves  $\{\gamma_{i}\}$ in 
   $\Sigma$ for which  $\gamma_{i} \rightarrow e$,  but  infinitely many of the straightened
   $X$-geodesics $\{f(\gamma_i)\}$  are a finite distance from any point in $X$. 
 \end{itemize}
 
 Thus the  geometry associated to an isolated planar end is either a cusp or a funnel. For  the other possible topological ends (isolated non-planar or not isolated) the ending geometry will be either a sequence of simple closed geodesics  or the disjoint union of  (up to a countable number)  half-planes. 
If the hyperbolic surface is   metrically  complete   then the geometry of any end   is   either  a cusp, a  funnel, or a sequence of simple closed geodesics that leave every compact set. If the hyperbolic surface   is geodesically complete then  the geometry of an  end is  either a cusp or a  sequence of simple closed  geodesics that leave very compact set. (See \cite{Ba2} and \cite{BaSa}) for details.


\section{QC homeomorphisms, Teichm\"uller  subspaces, and Zig-zag paths}
\label{sec: QC homeomorphisms and Teich  subspaces}

In this section we bring quasiconformal  (qc) mappings  and 
Teichm\"uller space into the picture. We begin with a Lemma.

\begin{lem} \label{lem: change an FN-coordinate}
	Let $\{\gamma_i \}$  be the pants curves of a  pants decomposition  of $\Sigma$. Suppose 
	$f: \Sigma \rightarrow X$ and $g: \Sigma \rightarrow Y$ are marked convex hyperbolic structures  in $H(\Sigma)$
	having  identical  FN-coordinates for all $\{\gamma_i\}$ except one coordinate which is nonzero in both structures.  Then there exists a qc-mapping $q: X \rightarrow Y$.
\end{lem}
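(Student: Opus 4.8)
The plan is to build $q$ explicitly from the pants decomposition, concentrating all quasiconformal distortion in the one or two pairs of pants that meet $\gamma_j$ together with a collar about $\gamma_j$, and letting $q$ be an isometry everywhere else. Write $\Gamma=\{\gamma_i\}$ and let $\gamma_j$ be the curve whose coordinate differs; by hypothesis its length is positive in both $X$ and $Y$, so $\gamma_j$ is realized by a genuine simple closed geodesic in each. Using the markings, realize $\Gamma$ geodesically in $X$ and in $Y$, cutting each into pairs of pants indexed the same way. Every pair of pants not having $\gamma_j$ as a cuff has all three of its cuff lengths identical in $X$ and $Y$, hence is isometric to its counterpart, and the gluing (twist) data along every curve other than $\gamma_j$ agrees. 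On the union of these pieces I would declare $q$ to be the marking-respecting isometry, so the problem reduces to the finite piece $N$ consisting of the one or two pants adjacent to $\gamma_j$ glued along $\gamma_j$, together with a collar of $\gamma_j$.

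Next I would split into the two kinds of coordinate change. If the differing coordinate is the twist $t_j$, then every pair of pants is isometric in $X$ and $Y$ and only the gluing along $\gamma_j$ differs, by $\Delta t=t_j(Y)-t_j(X)$. In Fermi coordinates $(\rho,\theta)$ on a collar $A=\{|\rho|<\rho_0\}$ about $\gamma_j$ I would define the shear $(\rho,\theta)\mapsto(\rho,\theta+\psi(\rho))$, where $\psi$ is smooth, equal to $0$ near $\rho=-\rho_0$ and equal to the required fractional-twist amount near $\rho=+\rho_0$; composed with the fixed isometries on the two sides this realizes the extra twist $\Delta t$. This map is a diffeomorphism of the compact collar with uniformly bounded derivative, hence bi-Lipschitz and so quasiconformal, and it is the identity on $\partial A$, so it glues with the isometry on the complement.

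If instead the differing coordinate is the length $\ell_j$, changing from $\ell>0$ to $\ell'>0$, then the one or two pants having $\gamma_j$ as a cuff change shape (that cuff length goes from $\ell$ to $\ell'$, the remaining cuffs fixed), while all other pants stay isometric. Cutting such a pair of pants along its three seams into two right-angled hexagons, exactly one side length changes; I would interpolate between the old and new hexagons by a diffeomorphism that is the identity on the two fixed cuffs and the seams meeting them and that reparametrizes the $\gamma_j$-cuff by the constant-speed scaling $\ell'/\ell$, then double it to a diffeomorphism of the pair of pants. As a diffeomorphism between compact hyperbolic pairs of pants it is bi-Lipschitz, hence quasiconformal; the constant-speed reparametrization of the $\gamma_j$-cuff makes the maps on the two sides of $\gamma_j$ agree there, respecting the common fixed twist, so they glue across $\gamma_j$.

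Finally I would assemble the pieces: the isometry on the complement of $N$ and the quasiconformal map on $N$ agree on their common boundary geodesics by construction, so they patch to a global orientation-preserving homeomorphism $q\colon X\to Y$ compatible with the markings. Since $q$ is an isometry off a compact set and quasiconformal with a finite dilatation bound on the finitely many nontrivial pieces, and a homeomorphism that is quasiconformal off a finite union of smooth arcs and geodesics is quasiconformal, $q$ is globally quasiconformal. I expect the main obstacle to be the length case: producing the diffeomorphism between the two pairs of pants so that its dilatation is controlled by $\ell,\ell'$ and the fixed cuff lengths, and so that its boundary behavior on the $\gamma_j$-cuff matches on both sides and glues without extra distortion along $\gamma_j$; the twist case and the global patching are comparatively routine.
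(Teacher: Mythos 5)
Your construction is correct in outline but takes a genuinely different route from the paper's. The paper factors the coordinate change exactly as you do --- length first, then twist --- but handles each factor with softer tools: for the length change it simply invokes Bishop's theorem (\cite{Bish}) on quasiconformal maps of pairs of pants (your hexagon interpolation amounts to an explicit proof of the special case needed), and for the twist it lifts to $\mathbb{H}$, builds an equivariant quasi-isometry, passes to the induced equivariant quasisymmetric map on $\partial \mathbb{H}$, extends to an equivariant quasiconformal map of $\mathbb{H}$, and descends to the quotient; your Fermi-coordinate collar shear achieves the same thing downstairs in one step and is arguably cleaner. What the paper's route buys is brevity and quantitative dilatation control (it also remarks that the lemma follows from Lemma 8.2 of \cite{ALPS} together with Matsuzaki's multi-twist estimates \cite{Ma1}); what yours buys is a self-contained cut-and-paste argument, at the cost of the gluing bookkeeping you acknowledge. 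Two points in your write-up need repair, neither fatal. First, your pants map cannot be the identity on the seams: when $\ell_j$ changes, all three seam lengths of an adjacent pair of pants change (including the seam joining the two fixed cuffs), so the map should only be required to be the identity on the fixed cuffs and constant-speed (affine) on each seam; a bi-Lipschitz diffeomorphism with these boundary values exists since the hexagons are compact. Second, constant-speed scaling of the $\gamma_j$-cuff preserves the twist only as a fraction of length, so if twists are measured as lengths your length-change map perturbs $t_j$ by the factor $\ell'/\ell$ and must be followed by a collar shear; this correction also covers the case, permitted by the hypothesis and adopted in the paper's reading, that the length and twist at $\gamma_j$ differ simultaneously, and it costs nothing since a composition of quasiconformal maps is quasiconformal.
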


\begin{proof} Let $\gamma_n$ be the pants curve for which the marked hyperbolic structure length and twist coordinates differ. That is,
	$\ell_{\gamma_n} (X) \neq  \ell_{\gamma_n} (Y)$ and
	$t_{\gamma_n} (X) \neq  t_{\gamma_n} (Y)$.
	Use Bishop's theorem  (see \cite{Bish})  to get a qc mapping that fixes all the coordinates except the one for $\gamma_n$ for which it takes 
	$\ell_{\gamma_n} (X)$ to  $\ell_{\gamma_n} (Y)$. We next will  follow this map by the qc mapping which is a Nielsen twist along $\gamma_n$. To this  end we outline the steps. Assume   the $FN$-coordinates for $X$ and $Y$ differ only with the twists $t_{\gamma_n} (X)$ and $t_{\gamma_n} (Y)$. Set 
	$X=\mathbb{H}/G_{X}$ and $Y=\mathbb{H}/G_{Y}$. Lift this configuration to $\mathbb{H}$ and normalize the lift of $\gamma_n$ as the positive imaginary axis in the upper half-plane. There exists an equivariant quasi-isometry from 
	$\mathbb{H}$ to $\mathbb{H}$. Now the quasi-isometry induces an equivariant quasisymmetric mapping on 
	$\partial \mathbb{H}$, and  this quasisymmetric mapping extends to  an equivariant qc mapping of the upper half-plane. Finally, being equivariant,  this map descends to a qc mapping from $X$ to $Y$.
\end{proof}
The above lemma also follows from Lemma 8.2 in \cite{ALPS} and Matsuzaki's estimates on dilatations of multi-twists from \cite{Ma1}.

\subsection*{Teichm\"uller subspaces:} For each  $(X,h) \in H(\Sigma)$ we define the {\it Teichm\"uller subspace}, $\mathcal{T} (X,h)$, 
passing through $(X,h)$ to be 
\begin{displaymath}
\mathcal{T} (X,h) :=\{(Y,g) \in H(\Sigma): \exists \,
q:X \xrightarrow{qc} Y \text{where  diagram  \ref{diagram: qc, commuting up to homotopy} commutes up to homotopy}\}
\end{displaymath}

\begin{equation}\label{diagram: qc, commuting up to homotopy}
	\begin{tikzcd} 
		\Sigma  \arrow{r}{h} \arrow[swap]{dr}{g} & X  \arrow{d}{q} \\
		& Y
	\end{tikzcd}
\end{equation}

\begin{lem}  \label{lem: Teich subspaces disjoint}  Teichm\"uller subspaces  satisfy the following:
	
	\begin{enumerate}
		
		\item $\mathcal{T}(X,h)$  has no interior
		\item (Teichm\"uller subspaces are disjoint) 
		For $(X,h), (Y,g) \in H(\Sigma)$
\begin{displaymath}
\mathcal{T}(X,h) \cap \mathcal{T}(Y,g)	\ne \emptyset
	\Leftrightarrow 
		 \mathcal{T}(X,h)= \mathcal{T}(Y,g) \Leftrightarrow
	 g \circ h^{-1} \text{ is homotopic to a qc map}.
		\end{displaymath}
\end{enumerate}
\end{lem}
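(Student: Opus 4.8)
The plan is to dispatch the two parts separately: statement (2) is essentially formal once one recognizes that membership in a Teichm\"uller subspace is an equivalence relation, while statement (1) requires a genuine topological construction.

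For part (2), I would first unwind the definition of $\mathcal{T}(X,h)$ via diagram \ref{diagram: qc, commuting up to homotopy}: the point $(Y,g)$ lies in $\mathcal{T}(X,h)$ precisely when there is a qc map $q\colon X\to Y$ with $g$ homotopic to $q\circ h$, equivalently when $g\circ h^{-1}$ is homotopic to a qc map. So membership in $\mathcal{T}(X,h)$ is exactly condition (c). I would then show that the relation ``$g\circ h^{-1}$ is homotopic to a qc map'' is an equivalence relation on $H(\Sigma)$: reflexivity follows because the identity is conformal, hence qc; symmetry follows because the inverse of a qc map is qc and a homotopy inverse of $g\circ h^{-1}$ is homotopic to $h\circ g^{-1}$; transitivity follows because a composition of qc maps is qc, using $k\circ h^{-1}\simeq(k\circ g^{-1})\circ(g\circ h^{-1})$. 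Since isometries are conformal, the relation does not depend on the chosen representatives of the marked structures and so descends to $H(\Sigma)$. Once this is in place, $\mathcal{T}(X,h)$ is simply the equivalence class of $(X,h)$, and the chain of equivalences among $\mathcal{T}(X,h)\cap\mathcal{T}(Y,g)\neq\emptyset$, the equality $\mathcal{T}(X,h)=\mathcal{T}(Y,g)$, and condition (c) is the standard fact that equivalence classes either coincide or are disjoint.

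For part (1), I would show no point of $\mathcal{T}(X,h)$ is interior by producing, near any $(Y,g)\in\mathcal{T}(X,h)$ and for any $\varepsilon>0$, a structure $(Z,k)\in H(\Sigma)$ with $d\bigl((Y,g),(Z,k)\bigr)<\varepsilon$ but $(Z,k)\notin\mathcal{T}(Y,g)=\mathcal{T}(X,h)$. The key observation is that in the product metric the coordinate of index $i$ contributes at most $\frac{1}{2^{i-1}}$, so choosing $N$ with $\sum_{i>N}\frac{1}{2^{i-1}}<\varepsilon$, I may alter the coordinates of index $i>N$ by arbitrarily large amounts while remaining inside the $\varepsilon$-ball. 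Since $\Sigma$ has infinite type, the pants decomposition has infinitely many nonperipheral curves $\gamma_i$; I leave the first $N$ coordinates of $(Y,g)$ unchanged, keep all twists fixed, and rescale the remaining lengths so that the ratios $\ell_{\gamma_i}(Z)/\ell_{\gamma_i}(Y)$ are unbounded (for instance by multiplying the $i$-th length by $i$). By our convention of working with convex hyperbolic surfaces (cf.\ the Remark following the definition of $H(\Sigma)$), every choice of Fenchel--Nielsen coordinates is realized, so $(Z,k)$ is a legitimate point of $H(\Sigma)$.

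It then remains to verify $(Z,k)\notin\mathcal{T}(Y,g)$. If it were, there would be a qc map $q\colon Y\to Z$ homotopic to $k\circ g^{-1}$, hence $K$-qc for some finite $K$, carrying the free homotopy class of $g(\gamma_i)$ to that of $k(\gamma_i)$; the standard length-distortion inequality for quasiconformal maps (Wolpert) would then force $\frac{1}{K}\,\ell_{\gamma_i}(Y)\le\ell_{\gamma_i}(Z)\le K\,\ell_{\gamma_i}(Y)$ for all $i$, contradicting the unboundedness of the ratios. Hence $(Y,g)$ is not interior and $\mathcal{T}(X,h)$ has empty interior. I expect this final step to be the main obstacle: it is exactly here that one must combine the uniform length-distortion bound for qc maps with the fact that the defining map is pinned down (up to homotopy) by the marking, and must arrange the perturbation so that its length ratios escape every bound while the Fenchel--Nielsen distance stays below $\varepsilon$.
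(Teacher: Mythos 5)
Your proof is correct and follows essentially the same route as the paper: your part (2) is the paper's diagram chase recast as the observation that ``$g\circ h^{-1}$ homotopic to a qc map'' is an equivalence relation, and your part (1) fleshes out, via Wolpert's length-distortion lemma, exactly the paper's argument that an open set in $H(\Sigma)$ constrains only finitely many Fenchel--Nielsen coordinates, so the infinitely many remaining coordinates can be altered to destroy quasiconformal equivalence. The only difference is level of detail, not method.
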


\begin{proof} 

 Item (1) follows from the fact that any open set in $H(\Sigma)$ is determined by restrictions on at most finitely many coordinates. Hence, arbitrary choices on the remaining (infinitely many) coordinates lead to surfaces that are not quasiconformally equivalent to $(X,h)$. For the proof of item  (2), if
$(Z,f) \in \mathcal{T}(X,h) \bigcap \mathcal{T}(Y,g)
		\ne \emptyset$ then,  by definition of  $\mathcal{T}(X,h)$ and 
		$\mathcal{T}(Y,g)$,   we have the commuting (up to isotopy)  diagram (\ref{eq: teich intersection}) where $q_1$ and $q_2$ are quasiconformal mappings.

\begin{equation}
\begin{tikzcd} \label{eq: teich intersection}
Y \arrow[leftarrow]{r}{g}\arrow[]{rd}{q_2}
                          &\Sigma  \arrow[]{d}{f} \arrow[]{r}{h}
&X \arrow[]{ld}{q_1}\\
&Z
\end{tikzcd}
\end{equation}

		 Following the outer triangle of diagram 
 (\ref{eq: teich intersection}) leads to the fact that 
 $(X,h) \in 	\mathcal{T}(Y,g)$. Similarly, $(Y,g) \in \mathcal{T}(X,h)$  and hence
 $ \mathcal{T}(X,h)= \mathcal{T}(Y,g)$. The last implication is also clear from 
 the outer triangle of the same diagram.   We have shown the forward implications of the  item (2) statement. It is straightforward  to check that these arguments are reversible  yielding the converse implications. We leave the details to the reader. 
\end{proof}

\subsection*{Geodesically complete structures and Teichm\"uller space:} 

Fix a Riemann surface $X=\mathbb{H}/G$ homeomorphic to $\Sigma$ where $G$ is of the first kind ($\Lambda (G)=\partial \mathbb{H}$). Equivalently the unique hyperbolic metric in the conformal class of $X$ has  geodesically complete convex core.  Let  $\mathcal{T}(X)$ be   the (quasiconformal) Teichm\"uller space of $X$ endowed with the Teichm\"uller metric; that is,
a point in Teichm\"uller space is a quasiconformal mapping $f: X \rightarrow Y$ with equivalence relation as in diagram 
(\ref{diagram: commuting up to homotopy})
with  the topology induced by the Teichm\"uller metric.  We denote the equivalence class of  $f: X \rightarrow Y$ by 
$[f: X \rightarrow Y]$, and  note that $\mathcal{T}(X)$
and $\mathcal{T}(Y)$ are identified if $X$ is conformally equivalent to $Y$. 
Associated to each marking $(X,h)$ of $X$ is a    map of $\mathcal{T}(X)$ into  $H(\Sigma)$ with the basepoint 
$[id:X \rightarrow X] \in \mathcal{T}(X)$ mapped to  
$(X,h) \in H(\Sigma)$.  
Namely,  each  $(X,h)  \in \text{H}(\Sigma)$ induces a  mapping,
$\Phi_h : \mathcal{T}(X) \rightarrow  H(\Sigma)$
given by 
\begin{equation}
	[f:X \rightarrow Y] \mapsto
	(Y, f \circ h).
\end{equation}

\begin{lem}  \label{lem: Image of Teich} Suppose  $(X,h) \in \mathcal{C}$. Then the  map $\Phi_h : \mathcal{T}(X) \rightarrow H(\Sigma)$ satisfies:
	
	\begin{enumerate}
	\item  The image of $\Phi_h$ is the Teichm\"uller subspace 
	$\mathcal{T} (X,h)$
		\item $\Phi_h$ is injective and continuous  
		\item 
		 $\mathcal{T}(X,h)$  is dense in 
		$\mathcal{C}$. 
		\item  $\Phi_h$ is  not a topological embedding
		
		\end{enumerate}
\end{lem}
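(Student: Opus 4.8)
The plan is to establish the four items in order, each building on the previous ones. For item (1) I would unwind the definitions: given $[f\colon X\to Y]\in\mathcal{T}(X)$, the image point $\Phi_h([f])=(Y,f\circ h)$ fits diagram (\ref{diagram: qc, commuting up to homotopy}) with $q=f$, the triangle commuting on the nose, so $\Phi_h([f])\in\mathcal{T}(X,h)$; conversely, if $(Y,g)\in\mathcal{T}(X,h)$ there is a qc map $q\colon X\to Y$ with $g\simeq q\circ h$, whence $[q\colon X\to Y]\in\mathcal{T}(X)$ and $\Phi_h([q])=(Y,q\circ h)=(Y,g)$ in $H(\Sigma)$ (take $\mathrm{id}_Y$ as the isometry in the equivalence relation of diagram (\ref{diagram: commuting up to homotopy})), giving surjectivity onto $\mathcal{T}(X,h)$. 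For item (2), injectivity comes from cancelling the marking: if $(Y_1,f_1\circ h)=(Y_2,f_2\circ h)$ there is an isometry $c\colon Y_1\to Y_2$ with $c\circ f_1\circ h\simeq f_2\circ h$, and precomposing with the homeomorphism $h^{-1}$ gives $c\circ f_1\simeq f_2$; since a hyperbolic isometry is conformal, $[f_1]=[f_2]$ in $\mathcal{T}(X)$. For continuity I would use that $H(\Sigma)$ carries the product topology, so it suffices to check that each Fenchel--Nielsen coordinate of $\Phi_h([f])$ varies continuously; if $d_{\mathcal{T}}([f_n],[f])\to 0$ then there are qc maps $Y\to Y_n$ with dilatation tending to $1$, restriction to the finite-type subsurface carrying a given curve $\gamma_i$ and its seam is dilatation non-increasing, and on a finite-type piece the length and twist functions depend continuously on the qc structure, so every coordinate converges.

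For item (3) I would approximate in the product topology using Lemma \ref{lem: change an FN-coordinate}. Given $(Z,k)\in\mathcal{C}$ and a basic neighborhood constraining only the finitely many coordinates $\gamma_1,\dots,\gamma_N$, start from $(X,h)$ and apply Lemma \ref{lem: change an FN-coordinate} once per curve to move the length and twist of each $\gamma_j$ to the values of $(Z,k)$ while fixing all other coordinates; each step is realized by a qc map, so the resulting structure lies in $\mathcal{T}(X,h)$, and because geodesic completeness is qc-invariant it also lies in $\mathcal{C}$. This structure sits inside the prescribed neighborhood, so $\mathcal{T}(X,h)$ meets every basic neighborhood of $(Z,k)$ and is dense in $\mathcal{C}$.

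For item (4) I would show the continuous bijection of items (1)--(2) is not a homeomorphism onto its image by producing a sequence converging in the product topology but not in the Teichm\"uller metric. Using Lemma \ref{lem: change an FN-coordinate}, let $(Y_n,g_n)\in\mathcal{T}(X,h)$ be obtained from $(X,h)$ by doubling the hyperbolic length of the single pants curve $\gamma_n$ and leaving every other coordinate unchanged, and set $[f_n]=\Phi_h^{-1}(Y_n,g_n)$. Since the index of the altered coordinate tends to infinity, each fixed coordinate is eventually unchanged, so $(Y_n,g_n)\to(X,h)$ in the product topology. On the other hand $[f_n]\not\to[\mathrm{id}]$: the ratio of hyperbolic lengths of $\gamma_n$ in the two structures equals $2$, and the length-distortion (Wolpert-type) inequality---equivalently the Matsuzaki dilatation estimates of \cite{Ma1}---forces $d_{\mathcal{T}}([f_n],[\mathrm{id}])\ge \frac12\log 2$ for every $n$. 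Hence $\Phi_h^{-1}$ is discontinuous and $\Phi_h$ is not a topological embedding.

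The step I expect to be the main obstacle is the uniform lower bound in item (4): one must be sure the single-coordinate deformations cost a definite amount of dilatation no matter how long or short $\gamma_n$ is in $X$, so that the collapse of infinitely many independent coordinate directions under the product topology is genuinely invisible to it yet detected by the Teichm\"uller metric. Phrasing the deformation as an exact doubling of hyperbolic length and invoking the length-ratio inequality (rather than an extremal-length estimate sensitive to the short-curve regime) is what makes the bound uniform in $n$; the other potential subtlety is the continuity of the twist coordinates in item (2), which I would handle by reducing to a finite-type subsurface where the classical continuity of Fenchel--Nielsen coordinates applies.
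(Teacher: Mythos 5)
Your proposal is correct, and items (1)--(3) follow essentially the paper's route: (1) is the same unwinding of the two commuting diagrams, (2) rests on the same quasi-invariance of length and twist parameters under quasiconformal maps of small dilatation, and your finite chain of applications of Lemma \ref{lem: change an FN-coordinate} for (3) is exactly the initial segment of the paper's zig-zag path, so the density argument is the same mechanism. Where you genuinely diverge is item (4). The paper deduces (4) directly from (3): since $\mathcal{T}(X,h)$ is a proper dense subset of $\mathcal{C}$, the map $\Phi_h$ cannot be a homeomorphism onto the \emph{closure} of its image, which is how the paper phrases ``embedding.'' You instead exhibit an explicit sequence $(Y_n,g_n)\in\mathcal{T}(X,h)$, obtained by doubling the length of the $n$-th pants curve, which converges to $(X,h)$ in the product topology while Wolpert's inequality forces $d_{\mathcal{T}}\bigl([f_n],[\mathrm{id}]\bigr)\ge\tfrac12\log 2$ uniformly in $n$. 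Your argument is stronger in a meaningful way: it proves failure of embedding in the standard sense (discontinuity of $\Phi_h^{-1}$ at the basepoint), whereas density of a proper subset by itself does not rule out a topological embedding under the usual definition (the irrationals embed densely in $\mathbb{R}$), so the paper's one-line deduction really leans on its nonstandard phrasing of ``embedding.'' Your quantitative construction is also independent of item (3), and correctly identifies why the bound is uniform in $n$: the length-ratio inequality gives $K\ge 2$ regardless of how short or long $\gamma_n$ is, so the collapse of high-index coordinate directions is invisible to the product topology but detected by the Teichm\"uller metric. The only step where you are no more rigorous than the paper is the continuity of the twist coordinate in item (2); both you and the authors ultimately appeal to quasi-invariance of Fenchel--Nielsen data under qc maps (the paper's reference for this is \cite{ALPS}), which is fine.
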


\begin{proof}  
                 For   item (1), if $(Z,g)$ is in the image of $\Phi_h$ then 
                $(Z,g)$ is equivalent to $(Y, q \circ h)$ where  $q$ is a qc mapping from     $X$ to $Y$. Therefore there is an isometry $c: Y \rightarrow Z$ so that  up to isotopy  the following diagram commutes 
                
\begin{equation}\label{diagram: image of Teich space}
	\begin{tikzcd} 
		\Sigma  \arrow{r}{h} \arrow[swap]{dr}{g} & X  \arrow{d}{c \circ q} \\
		& Z
	\end{tikzcd}
\end{equation}
But this is exactly what it means for $(Z,g)$ to be in $\mathcal{T}(X,h)$.
 Hence, the image of $\Phi_h \subseteq \mathcal{T}(X,h)$. On the other hand,  if  $(Y,g) \in \mathcal{T}(X,h)$ then there is a 
                 quasiconformal mapping $q: X \rightarrow Y$ in  $\mathcal{T}(X)$.
                 Thus $\Phi_{h}$ surjects onto $\mathcal{T}(X,h)$.
                 
		For (2),  injectivity is clear. The continuity follows from the fact that hyperbolic length and twist parameters are quasi-invariant under a 
		$K$-quasiconformal mapping, and that $\Phi_h$ is continuous if and only if the coordinate functions $\ell_{\gamma}$ and  $t_{\gamma}$ are continuous for each $\gamma$ a pants curve.  
	
Item (3) follows from the fact that  there is a zig-zag path in  
$\mathcal{T}(X,h)$ that can get arbitrarily close to any 
$(X,f) \in \mathcal{C}$.
Finally  for item (4), the failure to be an embedding (that is, homeomorphic to the closure of its image) follows from item (3). 	
\end{proof}

As a consequence of  Lemmas  \ref{lem: Teich subspaces disjoint} and \ref{lem: Image of Teich},  the space $\text{H}(\Sigma)$ decomposes into the disjoint union of Teichm\"uller subspaces--one for each qc-type of Riemann surface structure on $\Sigma$. Furthermore, this decomposition is invariant under the action of  $\Map$ on $H(\Sigma)$.  Since Teichm\"uller subspaces can be identified with Teichm\"uller spaces for geodesically complete structures, we have  that  $\mathcal{C} \subset H(\Sigma)$  is the disjoint union of Teichm\"uller spaces- one for each qc type. Moreover, these decompositions are invariant under the action of $\Map$ (see section \ref{sec: MCG action}). We have proven

\begin{cor}\label{cor: disjoint union of Teichspaces}
$H(\Sigma)$  as well as  the spaces $\mathcal{C}_{P}$,  $\mathcal{C}$, 
 and $\mathcal{C}^{\prime}$   are partitioned into the disjoint union of  Teichm\"uller subspaces. In addition,  if  a Teichm\"uller subspace is comprised of geodesically complete structures    then there is an injective  continuous  mapping of the  Teichm\"uller space to the Teichm\"uller subspace. In particular, $\mathcal{C}$ is partitioned into the disjoint union of 
 Teichm\"uller spaces- one for each qc-type of Riemann surface structure on 
 $\Sigma$.
\end{cor}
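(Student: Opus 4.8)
The plan is to assemble the corollary directly from Lemmas \ref{lem: Teich subspaces disjoint} and \ref{lem: Image of Teich}, handling the three assertions in turn. First I would establish the partition of $H(\Sigma)$ itself. I would introduce the relation $(X,h) \sim (Y,g)$ defined by requiring that $g \circ h^{-1}$ be homotopic to a quasiconformal map. By Lemma \ref{lem: Teich subspaces disjoint}(2) this is precisely the condition $\mathcal{T}(X,h) = \mathcal{T}(Y,g)$, and the same lemma shows that any two Teichm\"uller subspaces are either equal or disjoint. Reflexivity is witnessed by the identity map of $X$, while symmetry and transitivity are immediate from the chain of equivalences in that lemma; hence $\sim$ is an equivalence relation whose classes are exactly the Teichm\"uller subspaces. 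Since each $(X,h)$ lies in $\mathcal{T}(X,h)$, these classes cover $H(\Sigma)$, and being pairwise disjoint they partition it.

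Next I would pass to the subspaces $\mathcal{C}_{P}$, $\mathcal{C}$, and $\mathcal{C}^{\prime}$. The crucial point is that each of these is \emph{saturated} for $\sim$, that is, a union of entire Teichm\"uller subspaces. Indeed, if $(X,h)$ lies in one of them and $(Y,g) \in \mathcal{T}(X,h)$, then $Y$ is obtained from $X$ by a quasiconformal map; since geodesic completeness, metric completeness, and property $P$ (together with the requirement that isolated planar ends be cusps) are all quasiconformally invariant, as recorded in Section \ref{sec: property $P$  subspaces}, the structure $(Y,g)$ satisfies the same defining condition and so lies in the same subspace. Consequently the global partition of $H(\Sigma)$ restricts to a partition of each of $\mathcal{C}_{P}$, $\mathcal{C}$, and $\mathcal{C}^{\prime}$ into Teichm\"uller subspaces.

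For the second assertion, suppose a Teichm\"uller subspace $\mathcal{T}(X,h)$ is comprised of geodesically complete structures; since geodesic completeness is quasiconformally invariant, this is the same as asking that the basepoint $(X,h)$ lie in $\mathcal{C}$. Then Lemma \ref{lem: Image of Teich} applies verbatim: the map $\Phi_h : \mathcal{T}(X) \to H(\Sigma)$ is injective and continuous with image exactly $\mathcal{T}(X,h)$, which furnishes the required injective continuous mapping of the Teichm\"uller space onto the Teichm\"uller subspace. Finally, for the ``in particular'' statement about $\mathcal{C}$, I would combine the previous two steps: $\mathcal{C}$ is partitioned into Teichm\"uller subspaces, each of which is the continuous injective image of some $\mathcal{T}(X)$, and by Lemma \ref{lem: Teich subspaces disjoint}(2) two geodesically complete structures share a subspace exactly when their markings differ by a map homotopic to a quasiconformal one, i.e.\ when they determine the same quasiconformal type. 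Thus the subspaces are indexed by the qc-types of Riemann surface structures on $\Sigma$.

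I expect no serious obstacle, as the corollary is essentially a repackaging of the two preceding lemmas. The one step that genuinely demands care is the verification that the defining conditions of $\mathcal{C}_{P}$, $\mathcal{C}$, and $\mathcal{C}^{\prime}$ are quasiconformally invariant, so that these subspaces are saturated and the global partition descends to them cleanly; everything else is formal bookkeeping with the equivalence relation and an application of $\Phi_h$.
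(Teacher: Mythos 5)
Your proposal is correct and follows essentially the same route as the paper, which derives the corollary directly from Lemma \ref{lem: Teich subspaces disjoint} (disjointness of Teichm\"uller subspaces) and Lemma \ref{lem: Image of Teich} (the injective continuous map $\Phi_h$ with image $\mathcal{T}(X,h)$), together with the quasiconformal invariance of the defining properties of $\mathcal{C}_{P}$, $\mathcal{C}$, and $\mathcal{C}^{\prime}$. In fact your write-up makes explicit the equivalence-relation and saturation bookkeeping that the paper leaves implicit, so it is a faithful (and slightly more detailed) version of the paper's argument.
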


\subsection*{Zig-zag paths  in $\text{H}(\Sigma)$:}
In this subsection, we study the (path) connectivity of various subspaces of $H(\Sigma)$. 

Let  $P$ be a property that is an invariant under quasiconformal mappings,  and $z, w$  two marked hyperbolic structures  in 
$\mathcal{C}_P\subset H(\Sigma)$ having FN-coordinates 
$\vec{z}$ and $\vec{w}$, respectively. By a {\it zig-zag} path we mean a continuous path from $\vec{z}$ to $\vec{w}$ comprised of possibly infinitely many linear (in the FN coordinates) paths where each linear path is in the  $i^{th}$-coordinate plane. More precisely, the first linear path goes from $\vec{z}=(z_1,z_2,z_3,...)$ to 
$(w_1,z_2,z_3,...)$, the second from  $(w_1,z_2,z_3,...)$ to 
$(w_1,w_2,z_3,...)$, and so on. 
Putting these possibly infinitely many paths together, it is not difficult to see that we have constructed   
a  zig-zag  path $L: [0,\infty) \rightarrow  \mathcal{C}_P$ from the marked structure with coordinates $\vec{z}$ to the marked structure with coordinates $\vec{w}$ with the property that the distance  from $L(t)$ to $\vec{w}$ is less than the distance from $L(s)$ to $\vec{w}$ for $t \geq s$, and the image, $L[0,\infty)$, by Lemma \ref{lem: change an FN-coordinate}, is contained in the 
Teichm\"uller subspace $\mathcal{T}(\vec{z})$, where $\mathcal{T}(\vec{z})$ is the unique Teichm\"uller subspace passing through the marked hyperbolic structure $\vec{z}$.
Noting that  the coordinates of  $L(s)$ converge to $\vec{w}$, we have that $\lim_{s \rightarrow \infty} L(s) =\vec{w}$ in the product topology.  Finally noting that $\vec{w} \in \mathcal{C}_P$ using Lemma \ref{lem: change an FN-coordinate}, and the qc invariance of property $P$, we have that  the zig-zag path is in $\mathcal{C}_P$. See schematic Figure \ref{fig:zig-zag} for the 
subspace $\mathcal{C}$. We have proved

\begin{figure}[htb]
 \begin{center}
     \includegraphics[width=5.5in]{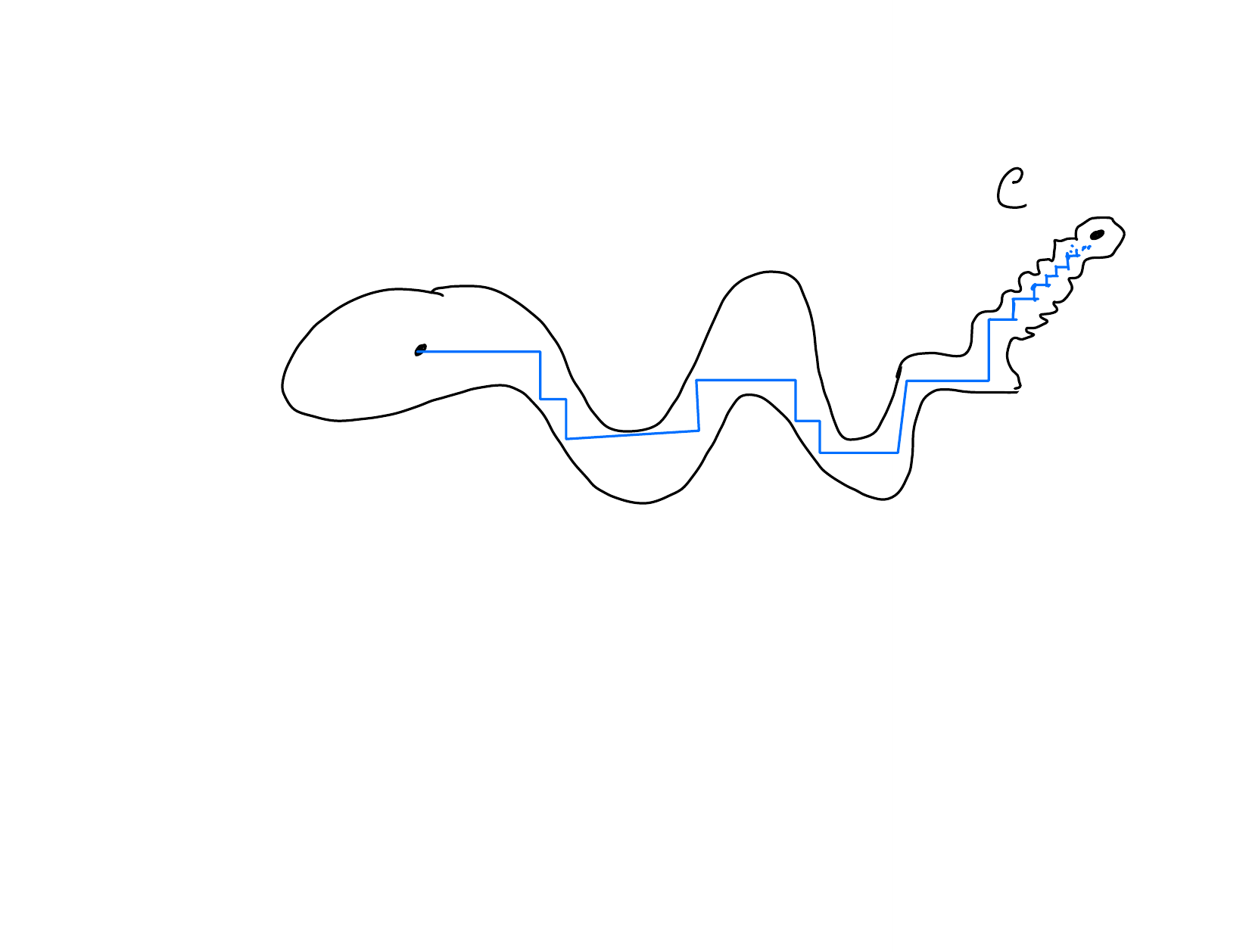}
     \vspace{-100pt}
    \caption{Zig-zag path}
    \label{fig:zig-zag}
  \end{center}
\end{figure}

\begin{lem} \label{lem: qc zig-zag path}
	Given $\vec{z}$ and $\vec{w}$ in $\mathcal{C}_{P} \subset H(\Sigma)$, there exists a zig-zag path 
	$L :[0,1] \rightarrow \mathcal{C}_{P}$ from $\vec{z}$ to $\vec{w}$, where 
	$L(t)$ for any $t \in [0,1)$ is in the same Teichm\"uller subspace as $L(0) = \vec{z}$.
\end{lem}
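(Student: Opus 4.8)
The plan is to construct $L$ by deforming the Fenchel--Nielsen coordinates one pants curve at a time, exactly along the lines sketched in the paragraph preceding the statement, and then to check continuity, membership in the correct Teichm\"uller subspace, and membership in $\mathcal{C}_P$. First I would fix the pants decomposition $\{\gamma_i\}$ and write $\vec z=(z_1,z_2,\dots)$ and $\vec w=(w_1,w_2,\dots)$ in the upper half-plane coordinates. For each $n\ge 1$ let $\sigma_n$ be the straight-line segment, in the single $n$th coordinate plane, from $(w_1,\dots,w_{n-1},z_n,z_{n+1},\dots)$ to $(w_1,\dots,w_{n-1},w_n,z_{n+1},\dots)$; only the $n$th entry moves and the others are frozen. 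Since a line segment between two points of the half-plane $\{\ell>0\}$ remains in $\{\ell>0\}$, every structure on $\sigma_n$ has positive length in the $\gamma_n$ coordinate, and the terminal point of $\sigma_n$ is the initial point of $\sigma_{n+1}$, so the segments concatenate.

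Next I would reparametrize, assigning $\sigma_n$ to the subinterval $[\,1-2^{-(n-1)},\,1-2^{-n}\,]$ of $[0,1)$ (so $\sigma_1$ runs on $[0,\tfrac12]$, $\sigma_2$ on $[\tfrac12,\tfrac34]$, and so on). This defines $L\colon[0,1)\to H(\Sigma)$, and I then set $L(1)=\vec w$. To see that $L(t)\in\mathcal{T}(\vec z)$ for every $t\in[0,1)$, observe that any point of $\sigma_n$ differs from the initial point of $\sigma_n$ in the single coordinate $\gamma_n$, with positive length in both; Lemma~\ref{lem: change an FN-coordinate} then supplies a qc map between them. Chaining the single-coordinate qc maps coming from $\sigma_1,\dots,\sigma_{n-1}$ and composing (a composition of qc maps is qc) shows that every point of $\sigma_n$ is qc-equivalent to $\vec z$, hence lies in $\mathcal{T}(\vec z)$ by Lemma~\ref{lem: Teich subspaces disjoint}. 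Because the conditions defining $\mathcal{C}_P$ are qc-invariant and $\vec z\in\mathcal{C}_P$, each such point also lies in $\mathcal{C}_P$, while $\vec w=L(1)\in\mathcal{C}_P$ by hypothesis; thus $L([0,1])\subset\mathcal{C}_P$. Note that $\vec w$ itself need not lie in $\mathcal{T}(\vec z)$, which is why the subspace conclusion is asserted only on $[0,1)$, consistent with Teichm\"uller subspaces having empty interior.

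The main obstacle is continuity of $L$ at the accumulation point $t=1$, where infinitely many segments pile up. Here I would lean on the fact that $H(\Sigma)$ carries the product topology: a basic neighborhood of $\vec w$ constrains only finitely many coordinates, say the first $N$, and for every $t\ge 1-2^{-N}$ the first $N$ coordinates of $L(t)$ have already been frozen at $w_1,\dots,w_N$. Hence $L(t)$ eventually lies in any prescribed neighborhood of $\vec w$, so $L(t)\to\vec w$ as $t\to 1^-$. Combined with the evident continuity of $L$ on each closed segment $\sigma_n$, this gives continuity of $L$ on all of $[0,1]$, and a harmless final reparametrization of $[0,1]$ produces the desired zig-zag path. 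I expect the coordinate-freezing argument for continuity at $t=1$ to be the only point requiring genuine care; the remaining assertions are immediate from Lemmas~\ref{lem: change an FN-coordinate} and~\ref{lem: Teich subspaces disjoint} together with qc-invariance of the defining property.
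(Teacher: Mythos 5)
Your proposal is correct and follows essentially the same route as the paper: coordinate-by-coordinate linear segments concatenated into a path, Lemma~\ref{lem: change an FN-coordinate} to keep each point in $\mathcal{T}(\vec z)$, qc-invariance of $P$ for membership in $\mathcal{C}_P$, and the product topology to get convergence to $\vec w$ at the endpoint. Your treatment is in fact somewhat more explicit than the paper's (the dyadic reparametrization onto $[0,1]$ and the careful continuity argument at $t=1$), but the underlying argument is the same.
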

Equivalently, the Lemma  in terms of hyperbolic structures is

\begin{lem} \label{lem: zig-zag path}
Given   $(X,f)$ and  $(Y, g)$ in  $\mathcal{C}_{P} \subset H(\Sigma)$, there exists a  zig-zag path $L:[0,1] \to  \mathcal{C}_{P}$ from  $(X,f)$ to $(Y,g)$,
where 
	$L(t)$ for any $t \in [0,1)$ is in the same Teichm\"uller subspace as $L(0) =(X,f)$.
\end{lem}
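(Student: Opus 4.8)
The plan is to deduce this statement directly from its coordinate counterpart, Lemma \ref{lem: qc zig-zag path}. Recall that assigning to each marked structure its Fenchel--Nielsen coordinates gives a homeomorphism of $H(\Sigma)$ onto $(\mathbb{R}_{+}\times\mathbb{R})^{\infty}$, and that this homeomorphism carries $\mathcal{C}_P$ onto its coordinate image and each Teichm\"uller subspace onto the corresponding set of coordinate vectors. Writing $\vec{z}$ for the coordinates of $(X,f)$ and $\vec{w}$ for those of $(Y,g)$, both lying in the coordinate image of $\mathcal{C}_P$, Lemma \ref{lem: qc zig-zag path} produces a zig-zag path from $\vec{z}$ to $\vec{w}$ with the asserted properties; transporting this path back through the homeomorphism yields the desired $L\colon[0,1]\to\mathcal{C}_P$.

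Since the two formulations are interchangeable, I would in fact record the construction directly. First I would concatenate the linear segments described before Lemma \ref{lem: qc zig-zag path}: the $n$-th segment moves only the $n$-th Fenchel--Nielsen coordinate from $z_n$ to $w_n$ while every other coordinate is held fixed at the value it has already reached. This defines a path $[0,\infty)\to H(\Sigma)$; after reparametrizing $[0,\infty)$ onto $[0,1)$ and setting $L(1)=(Y,g)$, I obtain a map $L\colon[0,1]\to H(\Sigma)$.

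Next I would verify the two required properties. For any $t\in[0,1)$ the structure $L(t)$ differs from $(X,f)$ in only finitely many coordinates, each of which has positive length; applying Lemma \ref{lem: change an FN-coordinate} once per altered coordinate produces a quasiconformal map from $X$ to the surface underlying $L(t)$, so by Lemma \ref{lem: Teich subspaces disjoint} the point $L(t)$ lies in the single Teichm\"uller subspace $\mathcal{T}(X,f)$. Because $P$ is a quasiconformal invariant and $(X,f)\in\mathcal{C}_P$, every such $L(t)$ again lies in $\mathcal{C}_P$; and $L(1)=(Y,g)\in\mathcal{C}_P$ by hypothesis. Thus the image lands in $\mathcal{C}_P$ and, for $t<1$, in the Teichm\"uller subspace through $(X,f)$, as claimed.

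The main point to check is continuity of $L$ at the accumulation parameter $t=1$. Here I would use that each coordinate of $L(t)$ is eventually constant and equal to the corresponding coordinate of $\vec{w}$, namely once the relevant segment has been completed, so that $L(t)\to\vec{w}$ in the product topology as $t\to 1$; continuity on $[0,1)$ is immediate since there each parameter has a neighborhood meeting only finitely many of the linear segments, on each of which $L$ is manifestly continuous. This product-topology convergence at the limit end is the only genuinely delicate step, and it is precisely where the choice of the product (rather than, say, the uniform) topology on $H(\Sigma)$ is essential.
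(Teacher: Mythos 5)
Your proposal is correct and follows essentially the same route as the paper: the paper likewise constructs the path segment-by-segment in Fenchel--Nielsen coordinates, invokes Lemma \ref{lem: change an FN-coordinate} to keep each partial path in the Teichm\"uller subspace $\mathcal{T}(X,f)$ (hence in $\mathcal{C}_P$ by quasiconformal invariance of $P$), and obtains continuity at the terminal point from coordinatewise convergence in the product topology, stating Lemma \ref{lem: zig-zag path} as the equivalent reformulation of Lemma \ref{lem: qc zig-zag path}. Your explicit treatment of continuity at $t=1$ and of the finitely-many-altered-coordinates argument only makes precise what the paper leaves implicit.
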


\begin{thm}  \label{thm: property P connectivity} 
 The subspace
   $\mathcal{C}_{P} \subset H(\Sigma)$ is   locally path connected,  connected and the subspace of metrically complete structures $\mathcal{C}^{\prime} \subset H(\Sigma)$ is  path connected. 
 In particular, the subspace of geodesically complete structures 
  $\mathcal{C} \subset H(\Sigma)$ is locally path connected, connected. 
\end{thm}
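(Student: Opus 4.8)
The plan is to derive every assertion from the Zig-Zag Lemma (Lemma \ref{lem: zig-zag path}), exploiting that $H(\Sigma)$ carries the product topology, whose basic open sets constrain only finitely many Fenchel--Nielsen coordinates. First I would dispatch connectivity of $\mathcal{C}_P$: given any two structures $(X,f),(Y,g)\in\mathcal{C}_P$, Lemma \ref{lem: zig-zag path} produces a path $L\co[0,1]\to\mathcal{C}_P$ joining them, so $\mathcal{C}_P$ is path connected and a fortiori connected.

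For local path connectivity I would use the fact that the sets constraining finitely many coordinates to convex (product-of-interval) open sets, while leaving the remaining coordinates free, form a basis for the topology; hence it suffices to prove that $U\cap\mathcal{C}_P$ is path connected for each such basic open $U$. Given $p,q\in U\cap\mathcal{C}_P$, I would run the coordinate-by-coordinate construction of the zig-zag from $p$ to $q$: at each stage a single coordinate is interpolated linearly, the earlier coordinates already agree with those of $q$, and the later ones still agree with those of $p$. For every coordinate constrained by $U$, the interpolated value is a convex combination of the $p$-value and the $q$-value, both of which lie in the prescribed convex open set, so it stays there; the unconstrained coordinates do not affect membership in $U$. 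Thus the path remains in $U$. By Lemma \ref{lem: change an FN-coordinate} each single-coordinate move is realized by a quasiconformal map, so every point of $L([0,1))$ lies in the Teichm\"uller subspace of $p$ and hence, by the quasiconformal invariance of property $P$ and of the condition ``all isolated planar ends are cusps'', remains in $\mathcal{C}_P$; the terminal point $q$ lies in $\mathcal{C}_P$ by hypothesis. Therefore $U\cap\mathcal{C}_P$ is path connected, which gives local path connectivity.

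For the subspace $\mathcal{C}$ of geodesically complete structures, I would observe that geodesic completeness forces the convex core to exhaust the surface, so no funnels occur and every isolated planar end is a cusp (as recorded just after the definition of $\mathcal{C}$). Consequently $\mathcal{C}=\mathcal{C}_P$ with $P$ taken to be geodesic completeness, and the previous two paragraphs apply verbatim, yielding the ``in particular'' clause. For the space $\mathcal{C}'$ of metrically complete structures, metric completeness is again quasiconformally invariant, so the same zig-zag construction---now carried out without the cusp restriction, which $\mathcal{C}'$ does not impose---connects any two metrically complete structures through metrically complete ones, proving $\mathcal{C}'$ is path connected.

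The main obstacle I anticipate is the local path connectivity step. The difficulty is that $\mathcal{C}_P$ is not open in $H(\Sigma)$: property $P$ and the cusp condition constrain the entire infinite tail of coordinates, whereas a product-topology neighborhood controls only finitely many of them. The zig-zag is precisely the device that reconciles these, since its single-coordinate moves keep one inside a fixed quasiconformal (Teichm\"uller) class and hence inside $\mathcal{C}_P$, while convexity of the finitely many active constraints keeps one inside the prescribed neighborhood. The point requiring care is to verify that the infinite concatenation, reparametrized onto $[0,1)$ with limit $q$, genuinely stays in $U$ at every time and terminates at the assigned point of $\mathcal{C}_P$.
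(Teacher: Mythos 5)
Your treatment of $\mathcal{C}_P$ and of $\mathcal{C}$ is correct and follows the paper's route: both rest entirely on the Zig-Zag Lemma, and your extraction of local path connectivity (basic open sets constraining finitely many coordinates to convex sets are preserved by the coordinatewise linear interpolation, since every coordinate of the zig-zag is a convex combination of the endpoint values) is a valid, slightly more explicit variant of the paper's observation that the distance to the terminal point decreases along the zig-zag. Your identification of $\mathcal{C}$ with a space of the form $\mathcal{C}_P$ also matches the paper.

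The gap is in the last step, for $\mathcal{C}^{\prime}$. You assert that ``the same zig-zag construction---now carried out without the cusp restriction---connects any two metrically complete structures through metrically complete ones,'' but the zig-zag machinery breaks precisely where the cusp restriction is dropped. The only reason the zig-zag stays inside the subspace is quasiconformal invariance: each single-coordinate move is realized by a qc map (Lemma \ref{lem: change an FN-coordinate}), so every point of $L([0,1))$ lies in the Teichm\"uller subspace of the initial structure and inherits the property; the terminal point is handled by hypothesis. In $\mathcal{C}^{\prime}$ an isolated planar end may be a cusp (peripheral length coordinate $0$) for $\vec{z}$ but a funnel (positive peripheral length) for $\vec{w}$. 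The move in that coordinate then goes from $0$ to a positive value: Lemma \ref{lem: change an FN-coordinate} is inapplicable (it explicitly requires the coordinate to be nonzero in both structures), and indeed no quasiconformal map takes a cusp to a funnel end, so the intermediate structures do not lie in $\mathcal{T}(\vec{z})$ and qc-invariance of metric completeness tells you nothing about them; the conclusion of the Zig-Zag Lemma that $L(t)$ stays in the Teichm\"uller subspace of $L(0)$ is false for such a pair. This is exactly the obstruction the paper flags, and its proof circumvents it by concatenating two modified paths: choose an intermediate $\vec{x} \in \mathcal{C}^{\prime}$ all of whose isolated planar ends are cusps; go from $\vec{z}$ to $\vec{x}$ by zig-zagging on the non-peripheral coordinates while continuously shrinking each peripheral coordinate (setting it to $z_\gamma/s$ for $s \in [1,\infty)$), so that the non-qc cusp/funnel transition occurs only at the limit point $\vec{x}$, which lies in $\mathcal{C}^{\prime}$ by choice; then do the same from $\vec{w}$ to $\vec{x}$ and concatenate. (This is also why the theorem claims only path connectivity for $\mathcal{C}^{\prime}$, not local path connectivity: the detour through $\vec{x}$ need not stay in a small neighborhood.) As written, your $\mathcal{C}^{\prime}$ claim is unsupported and needs this, or an equivalent, repair.
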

\begin{proof}
Lemma \ref{lem: qc zig-zag path} combined with the fact that the distance to $\vec{w}$ decreases along $L(s)$ allows us to conclude that
$\mathcal{C}_{P}$ is connected and locally path connected.  Since being 
a geodesically complete structure is a quasiconformal property, we can conclude the same for  $\mathcal{C}$.

To prove that $\mathcal{C}^{\prime}$ is  path connected we first note that 
 If  $X$ is a metrically complete hyperbolic surface then its geodesic completion   either requires attaching  nothing (in this   case that $X$ is geodesically complete)  or a union of funnels.   Here one needs a concatenation of two zig-zag paths. This is because there is an  obstruction to constructing a zig-zag path from $\vec{z}$ to $\vec{w}$. Namely,  an isolated planar end may be a cusp for the structure $\vec{z}$   but a funnel for the structure  $\vec{w}$. Thus to prove that $\mathcal{C}^{\prime}$ is path connected, we need  a concatenation of two zig-zag paths. 
 To this end, using Fenchel-Nielsen coordinates let $\vec{z}, \vec{w} \in \mathcal{C}^{\prime}$, and let $\vec{x} \in \mathcal{C}^{\prime}$ be such that the end geometry of every isolated planar end is a cusp. Construct a path $L_1: [1,\infty) \to \mathcal{C}^{\prime}$ from $\vec{z}$ to $\vec{x}$ as follows. On non-peripheral coordinates, this is just a zig-zag path as above. If $\gamma$ is a peripheral curve, then the corresponding coordinate along this path is set to $z_\gamma/s$ for $s \in [1, \infty]$. Similarly, we construct a path $L_2^{-1}$ from $\vec{w}$ to $\vec{x}$. Reparametrizing and concatenating yields the path $L_2 \ast  L_1$ from $\vec{z}$ to $\vec{w}$.
\end{proof}

\section{The space of complete structures is not convex}
\label{sec: The space of complete structures is not convex}
Let $\Sigma$ be of topologically infinite type with a topological pants 
decomposition.  Recall that  $\mathcal{C}  \subset H(\Sigma)$ ($\mathcal{C}^{\prime}   \subset H(\Sigma)$) is 
the subspace of geodesically complete (resp. metrically complete) 
hyperbolic structures. The Fenchel-Nielsen coordinates 
impose an obvious  straight line  geometry on $H(\Sigma)$, where  the pants decomposition  determines the coordinate system.

 Let  $\mathcal{D}$ be a  subspace of $H(\Sigma)$. We say that 
 $\mathcal{D}$ is {\it convex}  if for any $\vec{z}, \vec{w} \in \mathcal{D}$, the line segment 
 $$(1-s)\vec{z} +s\vec{w} \in \mathcal{D},  \text{  for all  }  0 \leq s \leq 1. $$
It is natural to ask if $\mathcal{C}$ or  $\mathcal{C}^{\prime}$ is convex.

\begin{thm} \label{thm: non-convexity} Suppose $\Sigma$ is of infinite topological type. With respect to any   pants decomposition the subspace of geodesically complete convex hyperbolic structures $\mathcal{C}  \subset H(\Sigma)$ is not convex.
The same  holds for the subspace  $\mathcal{C}^{\prime}$ of metrically complete structures.
\end{thm}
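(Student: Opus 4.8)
The plan is to reduce non-convexity to the fact that a divergence condition governing geodesic completeness at a single end is destroyed by taking midpoints of the length coordinates. First I would isolate an end $e$ of $\Sigma$ together with an escaping sequence of pants curves $\{\gamma_n\}$ drawn from the given decomposition $\Gamma$ with $\gamma_n \to e$. Since $\Sigma$ is of infinite type, $\Gamma$ is infinite, and the adjacency graph of the pairs of pants is connected, infinite, and locally finite (each pair of pants meets at most three others), so K\"onig's lemma yields an infinite ray of pants $P_1, P_2, \ldots$ and an associated sequence of separating curves $\gamma_1, \gamma_2, \ldots$ exiting toward an end $e$. I will impose zero twist on every $\gamma_n$ and fix all remaining Fenchel--Nielsen coordinates of both structures I build to be equal and to give a geodesically complete structure away from $e$ (for instance, all other ends cusps).

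Next I would recall the completeness criterion from \cite{Ba2, BaSa}: with twists controlled (here zero) along such a chain, the end $e$ carries a half-plane --- equivalently, the straightened geodesics $f(\gamma_n)$ fail to leave every compact set, in the language of the end-geometry dichotomy of Section \ref{sec: property $P$  subspaces} --- precisely when $\sum_n e^{-\ell_{\gamma_n}/2} < \infty$, while $e$ is geodesically (hence metrically) complete when this sum diverges. This single geometric input converts the problem into the behavior of a series $\sum_n e^{-c\,\ell_{\gamma_n}}$ for a fixed positive constant $c$, and the argument below is insensitive to the exact value of $c$.

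The analytic core is the choice of the two structures. I want length sequences $\{a_n\},\{b_n\}$ with $\sum_n e^{-a_n/2} = \sum_n e^{-b_n/2} = \infty$ but $\sum_n e^{-(a_n+b_n)/4} < \infty$; the point is that the midpoint length $(a_n+b_n)/2$ contributes exactly $e^{-(a_n+b_n)/4} = \bigl(e^{-a_n/2}\,e^{-b_n/2}\bigr)^{1/2}$, so this is the familiar phenomenon that a geometric-mean series can converge while both factor series diverge. Concretely I would partition $\NN$ into evens and odds and set, on the evens, $a_n = 2\log n$ and $b_n = n$, and on the odds the reverse: the even terms force $\sum_n e^{-a_n/2} \ge \sum_n 1/n = \infty$, the odd terms force $\sum_n e^{-b_n/2} = \infty$, while every term of the midpoint series is bounded by $n^{-1/2}e^{-n/4}$ and hence summable. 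Letting $\vec z,\vec w$ be the resulting structures (agreeing off $e$), the criterion places both in $\mathcal{C}\subseteq\mathcal{C}^{\prime}$, whereas the midpoint $\tfrac12(\vec z+\vec w)$ has $\sum_n e^{-\ell_{\gamma_n}/2} < \infty$ at $e$, so it grows a half-plane and lies in neither $\mathcal{C}$ nor $\mathcal{C}^{\prime}$. This exhibits the required failure of convexity for both subspaces at once.

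The main obstacle I anticipate is matching the clean flute criterion to a genuinely arbitrary pants decomposition: the pants along the K\"onig ray each carry a third boundary curve leading elsewhere, so the exiting subsurface need not be an honest flute, and I must ensure the cited series criterion still detects a half-plane at $e$ from the lengths $\ell_{\gamma_n}$ alone while the off-chain geometry is held fixed. I expect this to follow from the end-geometry description of Section \ref{sec: property $P$  subspaces} together with \cite{BaSa}, since whether the geodesics $f(\gamma_n)$ escape every compact set is a local feature of the chain; but verifying that the fixed surrounding geometry cannot prevent the half-plane from forming is the step that requires care.
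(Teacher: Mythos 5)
Your proposal is correct in outline, and it proves non-convexity by a genuinely different mechanism than the paper's.

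Both arguments begin identically: your K\"onig ray through the dual graph is exactly the paper's Lemma \ref{lem: embedded flute}. But the paper then holds the chain lengths \emph{fixed} at $4\log n$ in both endpoint structures and places opposite half-twists $\pm\tfrac{1}{2}$ on the chain curves, so the straight segment averages the \emph{twists} to zero at $s=\tfrac{1}{2}$; incompleteness of the zero-twist midpoint follows from Lemma \ref{lem: collar lemma} and $\sum n^{-2}<\infty$, while completeness of the two half-twist endpoints is the deep step, imported from Theorem 9.7 of \cite{BHS}. You instead hold all twists at zero and average \emph{lengths}, exploiting the fact that divergence of $\sum_n e^{-\ell_n/2}$ is not a convex condition on the length vector: two divergent series can have a convergent geometric-mean series, and your alternating choice $a_n,b_n\in\{2\log n,\, n\}$ realizes this. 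Completeness of your endpoints is then elementary: a geodesic exiting the end must cross the pairwise disjoint collars of the $\gamma_n^{*}$, whose widths are $\asymp e^{-\ell_n/2}$, and those sums diverge. So your route needs nothing beyond the collar lemma and the pants orthodistance formula --- i.e.\ beyond the paper's own Lemma \ref{lem: collar lemma} --- and avoids \cite{BHS} entirely. What you give up is mainly aesthetic: the paper's endpoints are isometric as unmarked surfaces, while yours are not even quasiconformally equivalent (by Wolpert's lemma, since $a_n/b_n\to 0$ along the evens).

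Two finishing points, which you partly anticipate. First, your series criterion is borrowed from cusped tight flutes, but each chain pants carries a third boundary curve $\delta_n$ glued into $\Sigma$; the orthodistance between $\gamma_n^{*}$ and $\gamma_{n+1}^{*}$ is then $\asymp e^{-\ell_n/2}+e^{-\ell_{n+1}/2}+e^{\ell_n'/4}e^{-(\ell_n+\ell_{n+1})/4}$, so the criterion as stated fails if the lengths $\ell_n'$ grow, and holds if you fix them bounded (AM--GM then dominates the cross term by the diagonal ones). Fix them, say, equal to $1$; this also forces rays that leave the chain, or cross $\partial\sigma$ infinitely often, to have infinite length --- the three-case ray analysis the paper runs for its $X_0,X_1$, which you still need because the flute criterion alone says nothing about geodesics that exit and re-enter the chain. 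Second, the chain curves need not separate $\Sigma$ (a spanning-tree edge need not be a bridge of the dual graph), so your finite-length path out the end does not instantly fail to converge; pass to the cover of the midpoint surface corresponding to $\pi_1(\sigma)$, where the $\gamma_n$ do separate, and push incompleteness down. The paper elides this same point, so it is a shared informality rather than a defect of your approach.
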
 

\begin{lem}\label{lem: embedded flute}
Suppose $\mathcal{P}$  is a topological pants decompostion of $\Sigma$.
Then there exists an embedded   flute subsurface   in $\Sigma$ having a  pants decomposition whose pants curves are in $\mathcal{P}$. 
\end{lem}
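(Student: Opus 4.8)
The plan is to translate the problem into graph theory via the adjacency graph of $\mathcal{P}$ and then extract a ray whose associated chain of pairs of pants is an embedded flute.

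First I would build the \emph{adjacency graph} $\mathcal{G}$: put one vertex for each pair of pants of $\Sigma \setminus \mathcal{P}$, and one edge for each curve of $\mathcal{P}$, joining the (at most two) pants meeting along that curve. Since a pair of pants has three cuffs and each cuff is glued to at most one other cuff, every vertex of $\mathcal{G}$ has degree at most three, so $\mathcal{G}$ is locally finite; since $\Sigma$ is connected, $\mathcal{G}$ is connected; and since $\Sigma$ has infinite topological type it is not a finite union of pants, so $\mathcal{G}$ is infinite. These are exactly the hypotheses of K\"onig's lemma, which produces a ray $P_0, P_1, P_2, \dots$ in $\mathcal{G}$; writing $\gamma_i = P_i \cap P_{i+1} \in \mathcal{P}$, the union $F = \bigcup_i P_i$ glued along the $\gamma_i$ is a half-infinite chain of pants all of whose pants curves lie in $\mathcal{P}$.

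The main obstacle is that an arbitrary such chain need not be a \emph{flute}, i.e.\ need not be planar. If two non-consecutive pants $P_i, P_j$ of the ray share an additional curve of $\mathcal{P}$ (a \emph{chord} of the ray---for instance three pants glued cyclically), then $F$ acquires genus and is no longer a flute. To rule this out I would not take an arbitrary ray but a \emph{geodesic} one: fix a root vertex, pass to a breadth-first (shortest-path) spanning tree $T$ of $\mathcal{G}$, which is again infinite, connected and locally finite, and apply K\"onig's lemma to $T$. This yields a ray $P_0, P_1, \dots$ each of whose initial segments realizes the distance in $\mathcal{G}$ to the root, so that $d_{\mathcal{G}}(P_i,P_j)=|i-j|$ for all $i,j$; a chord $P_iP_j$ with $|i-j|\geq 2$ would force $d_{\mathcal{G}}(P_i,P_j)=1$, a contradiction. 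Thus the geodesic ray is chordless.

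With a chordless geodesic ray the verification is routine. Chordlessness forces the only identifications among the $P_i$ to be the consecutive gluings along the $\gamma_i$ (one also checks, or arranges by a local adjustment, that $P_i$ and $P_{i+1}$ share no further cuff), so no handle is ever created and $F$ is genus zero. Every interior vertex $P_i$ then spends two of its three cuffs on $\gamma_{i-1}, \gamma_i$, leaving exactly one free cuff, which becomes either a boundary curve of $F$ or an isolated planar end and carries no self-gluing; a self-gluing could occur only at the initial pants $P_0$, which one simply discards and re-indexes. Hence $F$ is an embedded genus-zero half-infinite chain of pants, that is, an embedded flute, whose pants decomposition $\{\gamma_i\}$ lies in $\mathcal{P}$, as desired.
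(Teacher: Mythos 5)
Your overall architecture is the same as the paper's: the paper also forms the graph dual to $\mathcal{P}$, passes to a maximal (spanning) tree, and takes a geodesic ray in that tree, the chain of pants along the ray being the flute. Your refinement of using a breadth-first spanning tree, so that the ray is geodesic in the whole graph $\mathcal{G}$ and not merely in the tree, is a genuine improvement: it rules out chords outright, which the paper's ray does not (a ray geodesic only in $T$ can have non-consecutive pants glued along non-tree curves; the paper silently deals with this by working inside the ``maximal embedded planar domain,'' i.e.\ the open subsurface obtained by deleting all non-tree curves).

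However, one step, as written, can actually fail, and it is exactly the one you placed in parentheses: consecutive pants $P_i, P_{i+1}$ sharing a second curve of $\mathcal{P}$. Geodesicity says nothing about parallel edges (we have $d_{\mathcal{G}}(P_i,P_{i+1})=1$ either way), and the situation really occurs: build a Loch Ness monster from a linear chain of pants in which $P_{2i}$ and $P_{2i+1}$ are glued to each other along two of their cuffs, while $P_{2i+1}$ and $P_{2i+2}$ share one cuff. The dual graph is then a path with alternate doubled edges, your breadth-first ray is the whole path, and $F=\bigcup_i P_i$ is the entire infinite-genus surface --- not a flute. So ``one also checks'' is not available, and ``arranges by a local adjustment'' cannot mean re-routing the ray, since in this example every ray runs through doubled edges. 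The correct fix is the device the paper uses implicitly: do not take the closed union of the pants, but the open subsurface of $\Sigma$ obtained by deleting every curve of $\mathcal{P}$ other than the ray curves $\gamma_i$, equivalently the component of the complement of those curves containing the ray pants. That component is exactly the open pants glued along the $\gamma_i$; any doubled cuff (and likewise the possible self-gluing at $P_0$) is deleted and contributes a pair of planar ends instead of an interior curve, so planarity is restored and one gets an embedded flute whose pants curves $\{\gamma_i\}$ lie in $\mathcal{P}$. Note that once this is done, chordlessness --- and indeed the tree --- is no longer needed: any injective ray in the dual graph works; your geodesic ray (or the paper's tree) only matters if one insists that the third cuffs survive as honest boundary curves of the flute rather than ends.
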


\begin{proof}
Consider the  trivalent graph dual to the pants decomposition  
$\mathcal{P}$ with each edge of length one. This graph has a maximal  tree subgraph $T$- this corresponds to a maximal embedded planar domain.  Next choose a vertex $v \in T$, and a geodesic ray (path that minimizes  distance on $T$). This geodesic ray together with the edges emanating from it constitute a flute subsurface with pants decomposition whose curves are inherited from $\mathcal{P}$. 
\end{proof}

\vskip5pt

\begin{rem}
If $\Sigma$ is a tight flute surface (recall that all isolated planar ends are assumed to be cusps)  then we will consider  two (marked) convex  hyperbolic structures on $\Sigma$ which are half-twist flutes.  The straight line between them will not stay in $\mathcal{C}$.
\end{rem}

\noindent{\bf The Strategy:}  Now if $\Sigma$ is a flute surface we do as suggested in the remark above. Otherwise, we start with a pants decomposition of 
$\Sigma$ and an embedded flute surface with an inherited pants decomposition.

We first take a detour to prove an inequality for pairs of pants. 
Define 
$$r(x)=\text{arc}\sinh \left(\frac{1}{\sinh x}\right).$$

\begin{lem}\label{lem: collar lemma}
Consider a hyperbolic pair of pants with  geodesic boundary having lengths $\ell_{1}, \ell_{2}$,  $\ell^{\prime} \geq 0$ and let $d$ be the orthodistance between the geodesics of length $\ell_{1}$ and $\ell_{2}$. Then
$$
r\left(\frac{\ell_{1}}{2}\right)+ r\left(\frac{\ell_{2}}{2}\right)\leq d \leq   
r\left(\frac{\ell_{1}}{2}\right)+ r\left(\frac{\ell_{2}}{2}\right) +
 \frac{\ell^{\prime}}{2}.
$$
\end{lem}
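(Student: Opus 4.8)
The plan is to reduce the entire statement to the right-angled hexagon law of cosines. Cutting the pair of pants along its three seams (the common perpendiculars joining each pair of boundary geodesics) splits it into two congruent right-angled hexagons, and I would read off the sides of one such hexagon in cyclic order as $\tfrac{\ell_1}{2}, d, \tfrac{\ell_2}{2}, d_{23}, \tfrac{\ell'}{2}, d_{13}$, where $d$ is the seam joining the boundaries of lengths $\ell_1, \ell_2$. The key structural point is that in this cyclic arrangement $d$ is flanked by the two sides $\tfrac{\ell_1}{2}, \tfrac{\ell_2}{2}$ and is \emph{opposite} the side $\tfrac{\ell'}{2}$. The hexagon law of cosines (Buser \cite{Bus}) then gives
\[
\cosh \tfrac{\ell'}{2} = \sinh \tfrac{\ell_1}{2}\,\sinh \tfrac{\ell_2}{2}\,\cosh d - \cosh \tfrac{\ell_1}{2}\,\cosh \tfrac{\ell_2}{2},
\]
and solving for $\cosh d$ produces the closed form
\[
\cosh d = \frac{\cosh \tfrac{\ell'}{2} + \cosh \tfrac{\ell_1}{2}\,\cosh \tfrac{\ell_2}{2}}{\sinh \tfrac{\ell_1}{2}\,\sinh \tfrac{\ell_2}{2}}.
\]
When $\ell' = 0$ the third boundary is a cusp and the hexagon degenerates, so I would note that the identity persists in the limit because $d$ stays finite.

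The second ingredient is the pair of identities $\cosh r(x) = \coth x$ and $\sinh r(x) = 1/\sinh x$, immediate from the definition of $r$ together with $\cosh^2 - \sinh^2 = 1$. Writing $u = \ell_1/2$ and $v = \ell_2/2$, I would then expand with the hyperbolic addition formulas to obtain
\[
\cosh\big(r(u)+r(v)\big) = \frac{1 + \cosh u\,\cosh v}{\sinh u\,\sinh v}, \qquad
\sinh\big(r(u)+r(v)\big) = \frac{\cosh u + \cosh v}{\sinh u\,\sinh v}.
\]

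The lower bound then drops out: the expression for $\cosh d$ is increasing in $\ell'$, hence minimized at $\ell'=0$, where it equals $\cosh(r(u)+r(v))$ by the first identity above; since $\cosh$ is increasing on $[0,\infty)$ this yields $d \geq r(\ell_1/2)+r(\ell_2/2)$, with equality exactly in the cusp case. For the upper bound I would set $w=\ell'/2$ and compute, using the two closed forms and the addition formula,
\[
\cosh\big(r(u)+r(v)+w\big) - \cosh d = \frac{\cosh u\,\cosh v\,(\cosh w - 1) + (\cosh u + \cosh v)\sinh w}{\sinh u\,\sinh v}.
\]
Since $w \geq 0$ forces $\cosh w \geq 1$ and $\sinh w \geq 0$ while $\cosh u, \cosh v > 0$, the numerator is nonnegative, whence $\cosh d \leq \cosh(r(u)+r(v)+w)$ and therefore $d \leq r(\ell_1/2)+r(\ell_2/2)+\tfrac{\ell'}{2}$.

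The computations are routine, so the only genuine care points are bookkeeping rather than substance: getting the cyclic placement of the hexagon sides right so that $\tfrac{\ell'}{2}$ really is the side opposite $d$, and justifying the degenerate $\ell'=0$ cusp case as a limit of the hexagon formula. I expect that limiting/degenerate case to be the one spot that warrants an explicit sentence, since everything else follows mechanically from the two displayed closed forms and the sign of the numerator.
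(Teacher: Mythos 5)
Your proof is correct, and it takes a genuinely different route from the paper's. The paper splits the task: the lower bound is quoted as a consequence of the collar lemma, and the upper bound is obtained by cutting the right-angled hexagon into two right-angled \emph{pentagons} along a perpendicular dropped to the side $c=\ell'/2$ (splitting it as $c=c_1+c_2$), applying the pentagon formula to write $d=\operatorname{arcsinh}\bigl(\cosh c_1/\sinh a\bigr)+\operatorname{arcsinh}\bigl(\cosh c_2/\sinh b\bigr)$, and then chaining three estimates (an $\operatorname{arcsinh}$ shift inequality, the addition formula bound $\cosh c_1\cosh c_2\le\cosh c$, and $\log\cosh c\le c$). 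You instead work with the hexagon law of cosines, which gives the exact closed form $\cosh d=\bigl(\cosh\tfrac{\ell'}{2}+\cosh u\cosh v\bigr)/\bigl(\sinh u\sinh v\bigr)$, and compare it exactly against $\cosh\bigl(r(u)+r(v)\bigr)$ and $\cosh\bigl(r(u)+r(v)+\tfrac{\ell'}{2}\bigr)$ using the identities $\cosh r(x)=\coth x$, $\sinh r(x)=1/\sinh x$; your cyclic placement of the hexagon sides is right ($d$ flanked by $\ell_1/2,\ell_2/2$ and opposite $\ell'/2$), and your two displayed addition-formula identities and the sign analysis of the numerator are all correct. Your approach buys several things: both bounds fall out of a single identity, no appeal to the collar lemma is needed, and you get the precise equality condition (equality in the lower bound exactly when $\ell'=0$); the paper's pentagon route, by contrast, produces monotone intermediate bounds involving $c_1,c_2$ at the cost of lossier estimates. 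Both arguments share the same one remaining care point, which you correctly flag: when $\ell'=0$ the hexagon degenerates (ideal vertex at the cusp), and the formula must be justified as a limit or via the degenerate-polygon trigonometric identities — a sentence suffices, as you say.
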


\begin{figure}[htb]\label{fig: pants}
 \begin{center}
     \includegraphics[width=5.5in]{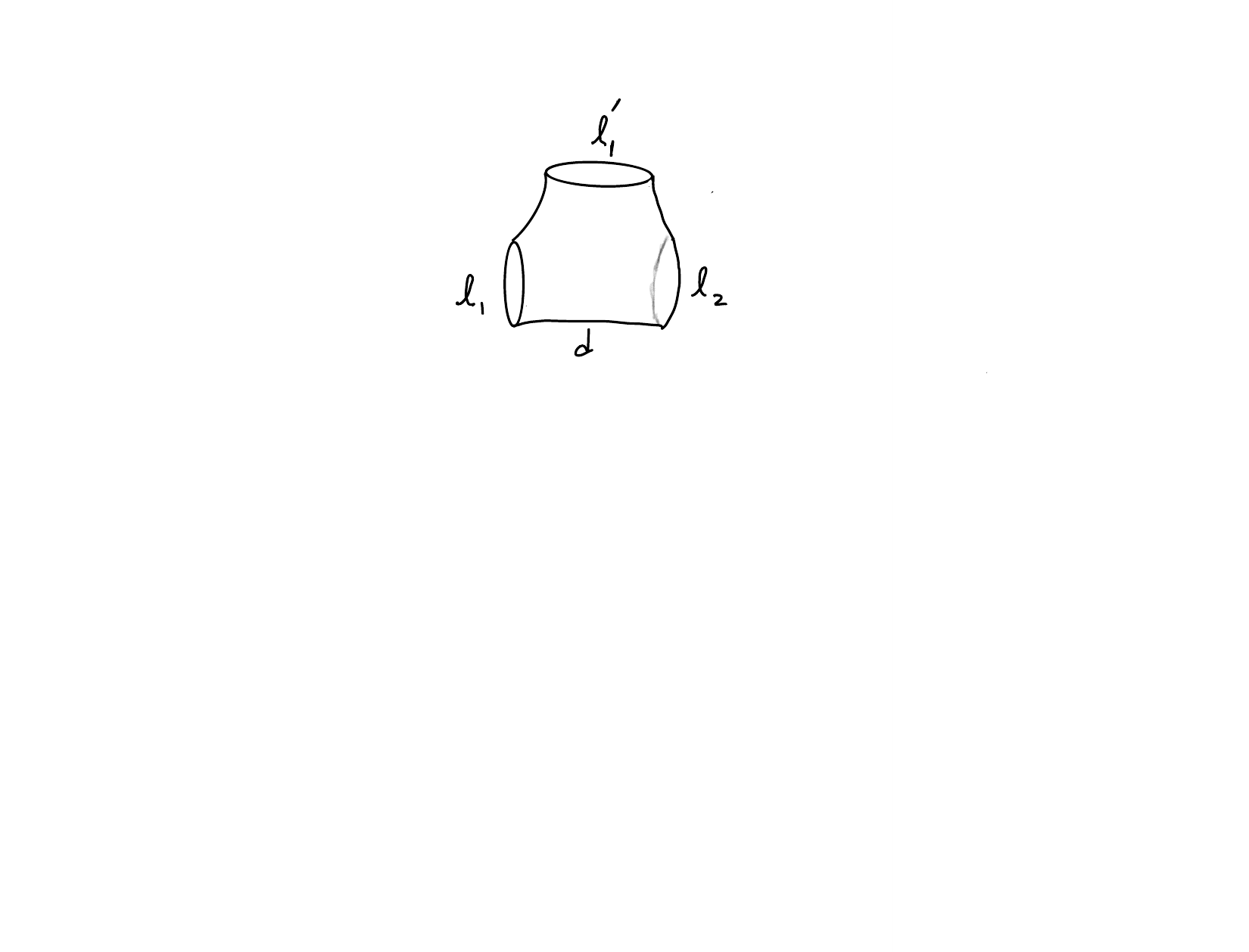}
     \vspace{-175pt}
    \caption{$a=\frac{\ell_{1}}{2},b=\frac{\ell_{2}}{2}, \text{ and } c= \frac{\ell_{1}^{\prime}}{2}$}
  \end{center}
\end{figure}

\begin{proof} 
The lower bound  follows from the collar lemma. We outline the proof of the upper bound. 

The pair of pants is the union of 
two isometric copies of a right-angled hexagon glued along three seams. This hexagon has alternating side lengths $a,b,$ and $c$, where 
$a=\frac{\ell_1}{2}, b= \frac{\ell_2}{2},$ and $c= \frac{\ell^{\prime}}{2}$.
The side joining $a$ and $b$ is denoted  $d$.  The 
hexagon  decomposes into two pentagons with the side $c$ divided into two segments $c_{1}$ and $c_{2}$. Using the pentagon formula  for hyperbolic geometry, a computation yields
\begin{displaymath}
d=\text{arc}\sinh \left(\frac{\cosh c_{1}}{\sinh a}\right)+\text{arc}\sinh 
\left(\frac{\cosh c_{2}}{\sinh b}\right)
\end{displaymath}
\begin{equation}\label{eq: arc sinh inequality}
\leq  \text{arc}\sinh \left(\frac{1}{\sinh a}\right)+\text{arc}\sinh 
\left(\frac{1}{\sinh b}\right)
+\log (\cosh c_{1}  \cosh c_{2})
\end{equation}
\begin{equation}\label{eq: summation}
\leq \text{arc}\sinh \left(\frac{1}{\sinh a}\right)+\text{arc}\sinh 
\left(\frac{1}{\sinh b}\right)
+\log (\cosh c)
\end{equation}
\begin{equation}
\leq \text{arc}\sinh \left(\frac{1}{\sinh a}\right)+\text{arc}\sinh 
\left(\frac{1}{\sinh b}\right)
+c
\end{equation}
Where inequality (\ref{eq: arc sinh inequality}) follows from the fact that
$\text{arc}\sinh x < \log x$, and inequality (\ref{eq: summation}) from the summation formula for $\cosh (c_1+c_2)$.
Finally,  replacing $a,b,$ and $c$ in terms of the  geodesic lengths of the pants curves yields the desired upper bound. 
\end{proof}

\begin{proof}[Proof of Theorem  \ref{thm: non-convexity}]
We prove the theorem for  $\mathcal{C}$. The argument for $\mathcal{C}^{\prime}$
is the same. 
Let $\Sigma$ be an infinite type surface with a topological pants decomposition.
Recall that  any isolated planar end is given a   cusp  geometry by decree.  Next  let $\sigma$ be a topologically embedded flute with boundary with an induced pants decomposition as guaranteed by  Lemma \ref{lem: embedded flute}. Call these pants curves $\{\gamma_{n}\}$, and the peripheral curves 
$\{\gamma^{\prime}_{n}\}$.  Of course,  if any of the isolated planar ends of 
$\sigma$ are also isolated planar ends  in 
$\Sigma$ then we make them cusps and designate the length of 
$\gamma^{\prime}_{n}$ to be zero. Otherwise,  we designate  the length of 
$\gamma^{\prime}_{n}$  to be $\ell_{n}^{\prime}$,  where 
$\sum \ell_{n}^{\prime} <\infty.$  These are the pants curves of $\sigma$ which are not peripheral on $\Sigma$.  We next  put a hyperbolic structure on this  flute subsurface $\sigma$. 

Let $Y_0$ and $Y_1$ be    hyperbolic structures on $\sigma$  with 
non-peripheral pants curves 
$\{\gamma_{n}\}$ having  coordinates 
$\{(4 \log n, \frac{1}{2})\}$ and $\{(4 \log n, -\frac{1}{2})\}$, 
resp., with the length of the $n^{th}$  peripheral curve being $\ell_{n}^{\prime}$
as above. These subsurfaces are called half-twist flutes. 
 The underlying hyperbolic structures of $Y_{0}$ and $Y_{1}$
are isometric, just their markings are different. It is a 
consequence of Theorem 9.7  in \cite{BHS}  that the non-isolated end of $Y_i$, $i=0,1$ is   complete.  
  Next we construct two geodesically complete hyperbolic structures $X_0$ and $X_1$. 
  
 Recall that we have a fixed pants decomposition on $\Sigma$, an embedded 
 flute surface $\sigma$, and an induced pants decomposition on $\sigma$.
  With the geometry constructed above  for $Y_1$ and $Y_2$, we isometrically embed $Y_{i}$ in $X_{i}$, for $i=0,1$.   For the  complement of the flute 
  $\Sigma -\sigma$ set the length of all the pants curves to be 1. The twist parameters in   $\Sigma -\sigma$ don't play a role and  can be arbitrarily chosen. We have now constructed our two hyperbolic structures 
  $X_0$ and $X_{1}$.
  
  We first claim that    $X_0$ and $X_{1}$ are in $\mathcal{C}$. That is, all geodesic rays are complete (do not escape in finite time). This is because 
  any geodesic ray exiting $X_i$ ($i=0,1$) either
  \begin{enumerate}
  \item the ray is eventually disjoint from $Y_i$, and thus passes through infinitely many simple closed geodesics of length 1 
  \item the ray eventually stays in the flute $Y_{i}$, and thus by construction does not escape  the non-isolated end of $Y_{i}$,  or
  \item enters and exits $Y_{i}$ infinitely often.  But then the geodesic ray passes through infinity many of the $\{\gamma_{n}^{\prime}\}$, and since their collar widths go to infinity, the ray is again complete. 
  \end{enumerate}
  
  Next consider the straight line segment in $H(\Sigma)$  from $X_{0}$ to $X_{1}$. We only focus on the flute coordinates since the other coordinates stay constant  along the line segment.  Let 
  $\vec{z_0}=(z_{0}^{1},...,z_{0}^{n},...)$ and   $\vec{z_1}=(z_{1}^{1},...,z_{1}^{n},...)$ be the Fenchel-Nielsen coordinates for flute subsurfaces $Y_0$ and $Y_1$, resp.
  Here the $n^{th}$-coordinate of $Y_0$ is 
  $z_{0}^{n}=4\log n+\frac{i}{2}$, and the $n^{th}$-coordinate of  $Y_1$
  is $z_{1}^{n}=4\log n-\frac{i}{2}$.
 For $0\leq s \leq 1$, the $n^{th}$-coordinate of  the straight line 
 $(1-s)\vec{z_0}+s \vec{z_1}$ is 
 $4\log n+\frac{i}{2}(1-2s)$. In particular, when $s=\frac{1}{2}$, the 
 coordinate is $4 \log n$. This hyperbolic structure (which we denote $X_{\frac{1}{2}}$)  is a 
 $0$-twist flute and hence, using Lemma \ref{lem: collar lemma},  the distance from $\gamma_{1}$ to   $\gamma_{N}$ equals
 $$
 \sum d_{n} \leq \sum_{n=1}^{N} \left(r \left( 2\log n \right) 
 +r \left(2\log n+1 \right)+
 \frac{\ell_{n}^{\prime}}{2}\right)
 $$
 But $\sum \ell_{n}^{\prime} < \infty$ by construction, and since
 $r(2\log n) +r(2\log n+1)$ is asymptotic to $2(\frac{1}{n^2} +\frac{1}{(n+1)^2})$
 as $n$ goes to $\infty$, we see that all the terms in the right hand side above
 converge. Hence $X_{\frac{1}{2}}$ is not complete, and thus 
 $\mathcal{C}$ is not convex. 
\end{proof}

\section{Mapping class group action on $H(\Sigma)$} \label{sec: MCG action}

Recall that $\Map\Sigma)$ acts on $H(\Sigma)$, and 
observe that it  keeps invariant the subspace  
$\mathcal{C}_{P}\subseteq H(\Sigma)$. In fact, we have 

\begin{lem}
	$\Map(\Sigma)$ acts on $H(\Sigma)$  as a group of homeomorphisms.
\end{lem}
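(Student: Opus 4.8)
The plan is to verify that $\phi\cdot(X,f):=(X,f\circ\phi^{-1})$ is a well-defined left action of $\Map(\Sigma)$ and that each element acts by a homeomorphism. Well-definedness on equivalence classes is routine: if $(X,f_1)\sim(X',f_2)$ via an isometry $c$, then the same $c$ witnesses $(X,f_1\circ\phi^{-1})\sim(X',f_2\circ\phi^{-1})$, and replacing $\phi$ by an isotopic homeomorphism only alters $f\circ\phi^{-1}$ within its homotopy class, which does not change the class $(X,f\circ\phi^{-1})$. The action axioms $\id\cdot(X,f)=(X,f)$ and $(\phi\psi)\cdot(X,f)=\phi\cdot(\psi\cdot(X,f))$ follow from $(\phi\psi)^{-1}=\psi^{-1}\phi^{-1}$. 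Since we then have a genuine group action, the map $\phi_*\colon(X,f)\mapsto\phi\cdot(X,f)$ is a bijection with inverse $(\phi^{-1})_*$, so it suffices to prove that every $\phi_*$ is continuous; applying this to $\phi^{-1}$ then shows $\phi_*$ is a homeomorphism.

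For continuity I would exploit the fact that precomposition by the marking merely relabels curves. Fix a pants decomposition $\Gamma=\{\gamma_i\}$ with seams and let $\Gamma'=\phi^{-1}(\Gamma)$, with seams transported by $\phi^{-1}$; this is again a pants decomposition with seams. For any $(X,f)$, the geodesic representing $(f\circ\phi^{-1})(\gamma_i)$ coincides with the one representing $f(\phi^{-1}(\gamma_i))$, so the $\Gamma$-coordinate of $\phi\cdot(X,f)$ along $\gamma_i$ is exactly the $\Gamma'$-coordinate of $(X,f)$ along $\gamma'_i=\phi^{-1}(\gamma_i)$. In chart notation, if $c_\Gamma,c_{\Gamma'}\colon H(\Sigma)\to(\RR_+\times\RR)^\infty$ denote the two coordinate charts, this says $c_\Gamma\circ\phi_*=c_{\Gamma'}$, whence $\phi_*=c_\Gamma^{-1}\circ c_{\Gamma'}$. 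By the earlier Lemma the product topology on $H(\Sigma)$ is independent of the pants decomposition, so both $c_\Gamma$ and $c_{\Gamma'}$ are homeomorphisms onto the coordinate space; therefore $\phi_*$ is a composition of homeomorphisms, hence a homeomorphism.

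Alternatively, and more concretely, I would check continuity coordinatewise: since the target carries the product topology, it is enough that each function $(X,f)\mapsto z_{\gamma_i}(\phi\cdot(X,f))=z_{\phi^{-1}(\gamma_i)}(X,f)$ is continuous. Here $\phi^{-1}(\gamma_i)$, together with its transported seam, is a fixed curve independent of $(X,f)$ and lies in a topologically finite subsurface of $\Sigma$, so its length and twist depend on only finitely many of the source FN-coordinates. The step to get right is continuity of the twist parameter under the change of pants curve; this is precisely what is established in the change-of-pants-decomposition lemma, so I would reduce to that argument rather than recompute it. Granting it, every coordinate of $\phi_*$ is continuous, so $\phi_*$ is continuous, and the action is by homeomorphisms.
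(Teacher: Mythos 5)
Your proof is correct and follows essentially the same route as the paper: both arguments come down to the observation that $\phi_*$ is a bijection whose effect on Fenchel--Nielsen coordinates is a relabeling by the pants decomposition $\phi^{-1}(\Gamma)$, so that continuity (equivalently, openness on basic product-open sets) reduces to the earlier lemma that the topology on $H(\Sigma)$ is independent of the choice of pants decomposition. Your write-up merely makes explicit what the paper leaves implicit -- well-definedness of the action on equivalence classes and the chart identity $c_\Gamma\circ\phi_*=c_{\Gamma'}$ -- which is a welcome clarification but not a different method.
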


\begin{proof}
	Any $\phi \in \Map (\Sigma)$ induces a bijection from $H(\Sigma)$ to itself defined by $(X,f) \mapsto (X, f \circ \phi^{-1})$. Consider sets of the form $V = (V_1, V_2, \cdots ) \subset H(\Sigma)$ where each $V_i$ is an open subset of the upper half-plane, $\text{U}$, and $V_i = \text{U}$ for all but finitely many $i \in \NN$. These form a basis for the topology on $H(\Sigma)$. 
Each such set $V$ gets mapped to an open set of the same form. The same argument applied to  
$\phi^{-1}$ allows us to conclude  that $\phi$ acts as a homeomorphism on 
$H(\Sigma)$.
\end{proof}

\subsection*{Mapping classes and  their action on the Teichm\"uller subspaces:}

Suppose $g \in \Map$. We say that $g$  is a {\it qc mapping class with respect to the hyperbolic structure}  $(X, h)$ if there exists a qc-mapping
$q:X \rightarrow X$  so that the  diagram below commutes up to isotopy,

\[ \begin{tikzcd} \label{fig: commutative diagram}
	\Sigma \arrow{r}{g} \arrow[swap]{d}{h} & \Sigma \arrow{d}{h} \\%
	X \arrow{r}{q}& X
\end{tikzcd}
\]

We have the following as a consequence of Lemma 
\ref{lem: Teich subspaces disjoint},

\begin{prop} Let $g \in MCG(\Sigma)$. The following are equivalent.
	\begin{enumerate}
		\item $g$  is a  qc mapping class with respect to $(X,h)$,
		\item  $g$ keeps invariant $\mathcal{T} (X,h)$
	\end{enumerate}
\end{prop}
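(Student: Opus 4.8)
The plan is to translate both conditions into the single statement that $h\circ g\circ h^{-1}\colon X\to X$ is homotopic to a quasiconformal map, and to pass between them using the action formula $g\cdot(X,h)=(X,\,h\circ g^{-1})$ together with part (2) of Lemma \ref{lem: Teich subspaces disjoint}. Condition (1) is, by definition, precisely the assertion that there is a qc map $q\colon X\to X$ with $q\circ h\simeq h\circ g$, i.e. that $h\circ g\circ h^{-1}$ is homotopic to a qc self-map of $X$; so the content of the proposition is that this is equivalent to the invariance of $\mathcal{T}(X,h)$ under $g$.

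The key step is to identify the image of a Teichm\"uller subspace under the action. First I would check that for any $g\in\Map(\Sigma)$ one has
\[
g\cdot \mathcal{T}(X,h)=\mathcal{T}\bigl(X,\,h\circ g^{-1}\bigr).
\]
This is a direct unwinding of the definitions: if $(Y,f)\in\mathcal{T}(X,h)$, witnessed by a qc map $p\colon X\to Y$ with $p\circ h\simeq f$, then $g\cdot(Y,f)=(Y,\,f\circ g^{-1})$ and the same $p$ satisfies $p\circ(h\circ g^{-1})\simeq f\circ g^{-1}$, so $g\cdot(Y,f)\in\mathcal{T}(X,\,h\circ g^{-1})$. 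Applying the resulting inclusion to $g^{-1}$ in place of $g$ and to the structure $(X,\,h\circ g^{-1})$ gives the reverse inclusion, hence equality. In particular $g$ carries Teichm\"uller subspaces to Teichm\"uller subspaces, and since distinct subspaces are disjoint by Lemma \ref{lem: Teich subspaces disjoint}, invariance of $\mathcal{T}(X,h)$ under $g$ is equivalent to $\mathcal{T}(X,\,h\circ g^{-1})=\mathcal{T}(X,h)$.

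Now I would apply part (2) of Lemma \ref{lem: Teich subspaces disjoint} to the two markings $h$ and $h\circ g^{-1}$ of $X$: the equality $\mathcal{T}(X,h)=\mathcal{T}(X,\,h\circ g^{-1})$ holds if and only if $(h\circ g^{-1})\circ h^{-1}=h\circ g^{-1}\circ h^{-1}$ is homotopic to a qc map. Finally, since a homeomorphism is quasiconformal exactly when its inverse is, $h\circ g^{-1}\circ h^{-1}$ is homotopic to a qc map precisely when its inverse $h\circ g\circ h^{-1}$ is, and the latter is condition (1). Chaining these equivalences yields (1) $\Leftrightarrow$ (2).

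The only genuine subtlety, rather than a serious obstacle, is bookkeeping the difference between ``commutes up to isotopy'' (used in the definition of a qc mapping class) and ``commutes up to homotopy'' (used in the definition of $\mathcal{T}(X,h)$ and in Lemma \ref{lem: Teich subspaces disjoint}); for homeomorphisms of surfaces these notions agree, so the translation is harmless, but I would state this explicitly. The remaining care is simply to keep the compositions in the correct order when precomposing with $g^{\pm1}$, and to note that the equivariance computation above is exactly what legitimizes replacing ``invariance of the whole subspace'' by ``the image of the basepoint lands back in the subspace,'' since $g$ permutes the pairwise-disjoint Teichm\"uller subspaces.
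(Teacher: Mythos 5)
Your proof is correct and follows essentially the same route as the paper, which states this proposition as a direct consequence of Lemma \ref{lem: Teich subspaces disjoint}: your equivariance computation $g\cdot\mathcal{T}(X,h)=\mathcal{T}\bigl(X,\,h\circ g^{-1}\bigr)$ combined with part (2) of that lemma (and the fact that inverses of qc maps are qc) is exactly the intended argument, written out in full. Your remark reconciling ``up to isotopy'' with ``up to homotopy'' is a harmless and sensible addition.
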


We have seen that $H(\Sigma)$ is the disjoint union of Teichm\"uller subspaces, and the mapping class group permutes the Teichm\"uller subspaces. We have a similar statement for the complete structures in $H(\Sigma)$. 

Not every surface $\Sigma$ admits a hyperbolic structure whose mapping class group orbit in $H(\Sigma)$ is dense. A \textit{nondisplaceable subsurface} is a compact subsurface $S \subset \Sigma$ such that $\phi(S) \cap S \neq \emptyset$ for all $\phi \in \Map(\Sigma)$.

\begin{lem} Let $\Sigma$ be an infinite type surface.
	\begin{enumerate}
		\item  Let $g \in \Map(\Sigma)$. The element $g$ either keeps invariant a Teichm\"uller subspace or moves it away from itself.	
		\item For any quasiconformally invariant property $P$, the mapping class group acts invariantly on the subspace $\mathcal{C}_P \subset H(\Sigma)$.
		\item  $\Sigma$ carries a convex hyperbolic structure with dense orbit if and only if there are no nondisplaceable subsurfaces.
	\end{enumerate}

\end{lem}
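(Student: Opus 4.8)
These follow from what is already in place. For (1), since $g\cdot(X,h)=(X,h\circ g^{-1})$, the image $g\cdot\mathcal{T}(X,h)$ is again a Teichm\"uller subspace, namely $\mathcal{T}(X,h\circ g^{-1})$. By Lemma~\ref{lem: Teich subspaces disjoint}(2) any two Teichm\"uller subspaces either coincide or are disjoint, so $g\cdot\mathcal{T}(X,h)$ is either equal to $\mathcal{T}(X,h)$ (kept invariant) or disjoint from it (moved off itself). For (2), the action fixes the underlying hyperbolic surface $X$ and only changes the marking, so the quasiconformally invariant property $P$, being intrinsic to $X$, is preserved. Moreover $\phi$ induces a homeomorphism $\phi_{*}$ of the end space that permutes the isolated planar ends among themselves; since the geometry of the end $e$ under the new marking $h\circ\phi^{-1}$ is the geometry of the isolated planar end $(\phi_{*})^{-1}(e)$ under $(X,h)$, every isolated planar end remains a cusp. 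Hence $\phi\cdot(X,h)\in\mathcal{C}_{P}$.

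\textbf{Reduction and the forward direction of (3).} The useful observation is that because $\phi\cdot(X,h)=(X,h\circ\phi^{-1})$ always carries the \emph{same} underlying surface $X$, and because every marking $g$ of $X$ equals $h\circ\psi^{-1}$ with $\psi=g^{-1}\circ h\in\Map(\Sigma)$, the orbit $\Map(\Sigma)\cdot(X,h)$ is precisely the set of all marked structures whose underlying surface is $X$. In the product topology, density of this set means: for every finite collection of pants curves and every realizable target of length and twist data there is a re-marking of $X$ approximately realizing it. With this in hand I would prove the forward implication by contraposition. Given a compact nondisplaceable $S$, extend $\partial S$ to a pants decomposition and choose an interior pants curve $b\subset S$ (its core if $S$ is an annulus). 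The nondisplaceability of $S$ is used to produce a fixed essential curve $\delta$ that must be crossed by every mapping-class image of $b$; granting this, the collar lemma (Lemma~\ref{lem: collar lemma}) forces $\ell_{h\circ\phi^{-1}(b)}(X)$ to be large whenever the $\delta$-coordinate is pinched. Since $\delta$ and $b$ can be taken disjoint, the basic open set ``$\delta$ short and $b$ short'' is nonempty in $H(\Sigma)$ yet avoided by the entire orbit, so the orbit is not dense.

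\textbf{Backward direction of (3).} If no compact subsurface is nondisplaceable, then iterating a single displacement pushes any finite subsurface into infinitely many pairwise disjoint copies of itself. Fixing a compact exhaustion of $\Sigma$ and a countable dense family of realizable finite configurations, I would construct a marked structure $X$ that is ``self-similar'': each target configuration appears, to within a prescribed error, as the geometry of one of these disjoint displaced copies, and a mapping class carrying the standard curves onto that copy realizes the target on the relevant coordinates. A diagonal argument over the exhaustion, combined with continuity of the length and twist functions and the completeness/metrizability of $H(\Sigma)$, should then yield one structure whose orbit meets every basic open set; the zig-zag technology of Lemma~\ref{lem: qc zig-zag path} is the natural device for making the successive coordinate adjustments while staying in the space.

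\textbf{Main obstacle.} The hard part is the backward construction: producing a \emph{single} $X$ all of whose re-markings are simultaneously dense, i.e.\ arranging the diagonal limit to converge in the Fenchel--Nielsen product coordinates to a genuine convex hyperbolic structure (and, if density is required inside $\mathcal{C}$, a geodesically complete one) rather than degenerating. A secondary difficulty is the topological input of the forward direction, namely extracting from nondisplaceability the fixed transversal $\delta$ crossed by every mapping-class image of the interior of $S$, which is exactly what lets the collar lemma convert the topological obstruction into a geometric one.
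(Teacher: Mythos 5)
Your items (1) and (2) are fine; the paper dismisses both as immediate, and your justification (disjointness of Teichm\"uller subspaces via Lemma \ref{lem: Teich subspaces disjoint}, plus the fact that the action changes only the marking and permutes isolated planar ends) is exactly what "immediate" means here. The genuine gaps are the two you flagged in item (3), and in both cases the paper's proof shows the obstacle is not where you think it is. For the direction "nondisplaceable $\Rightarrow$ no dense orbit," your collar-lemma strategy requires a fixed curve $\delta$ crossed by \emph{every} mapping-class image of $b$; nondisplaceability only gives $\phi(S)\cap S\neq\emptyset$, and no such $\delta$ is produced. (As written the mechanism is also internally inconsistent: you take $b$ and $\delta$ disjoint, but pinching the $\delta$-coordinate of an orbit point $(X,h\circ\phi^{-1})$ can only force its $b$-coordinate up if $b$ crosses $\delta$. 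What your hypothesis on $\delta$ actually yields is a uniform positive lower bound on $\ell_X(h\phi^{-1}(b))$ --- twice the collar width of the fixed geodesic $h(\delta)$ in $X$ --- with no pinching involved.) The paper needs no such $\delta$: it takes a pants decomposition of $\Sigma$ restricting to one of $S$, notes that by nondisplaceability every orbit image of a pants curve $\gamma\subset S$ meets the compact set $S$, and then invokes the discreteness of the set of lengths of simple closed geodesics meeting a fixed compact subsurface (Lemma 3.1 of \cite{BaKim}). Density of the orbit would force the orbit values of $\ell_\gamma$ to be dense in $\mathbb{R}_{>0}$, a contradiction. That discreteness lemma is the missing tool: it converts "meets a compact set," which nondisplaceability does give you, into a length obstruction, whereas the collar lemma demands "crosses a specific curve," which nondisplaceability does not give you.

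For the converse, your sketch has the right germ (disjoint displaced copies carrying a dense family of finite-type geometries), but the diagonal limit you call the main obstacle is unnecessary, and the paper's proof contains no limiting process at all. Since every compact subsurface is displaceable, each term $\Sigma_i$ of a compact exhaustion admits countably many translates $\Sigma_i^j$, pairwise disjoint across \emph{all} pairs $(i,j)$, with mapping classes $\phi_i^j$ satisfying $\phi_i^j(\Sigma_i^j)=\Sigma_i$. One installs on the $\Sigma_i^j$ hyperbolic structures whose push-forwards under $\phi_i^j$ run through a countable dense subset of $\mathcal{T}(\Sigma_i)$, and extends this data \emph{arbitrarily} to a single marked structure $(X,f)$: all the approximating geometries coexist in one surface precisely because the copies are disjoint, so nothing has to converge and no degeneration can occur. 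Density then follows since a basic open set in the product topology constrains only finitely many coordinates, hence is detected inside some $\Sigma_i$, and the orbit points $(X,f\circ(\phi_i^j)^{-1})$ realize a dense set of geometries on those coordinates. In short, your two obstacles are resolved by (a) the discrete-length-spectrum lemma and (b) the observation that disjointness of the translates replaces any diagonal argument; neither the zig-zag lemma nor completeness of $H(\Sigma)$ is needed.
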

\begin{proof}
	The proofs of items (1) and (2) are immediate. 
	
	Now to prove item (3), suppose $\Sigma$ has no nondisplaceable subsurfaces. Let $\{\Sigma_i\}$ be a compact exhaustion. Each $\Sigma_i$ has countable many translates $\{\Sigma^j_i\}$ that are pairwise disjoint and $\Sigma_i^j \cap \Sigma_k^l = \emptyset$ if $(i,j) \neq (k,l)$. We now build a hyperbolic structure on $\Sigma$ whose mapping class group orbit is dense in $H(\Sigma)$. For each $(i,j)$ there is a mapping class $\phi _i^j$ such that $\phi_i^j(\Sigma_i^j) = \Sigma_i$. We put a hyperbolic structure on each $\Sigma_i^j$ such that their push forward under the $\phi_i^j$ maps are dense in $\mathcal{T}(\Sigma_i)$. Extend this arbitrarily to a marked hyperbolic structure on $\Sigma$ say, $(X,f)$. Now let $(Y, g) \in H(\Sigma)$, for each $i \in I$ there is a sequence of maps $\phi_i^j$ such that $f((\phi_i^j)^{-1} (\Sigma_i))$ converges to $g(\Sigma_i)$. Since $(Y,g)$ is arbitrary, we conclude that $(X,f)$ has a dense mapping class group orbit. 
	
	For the other direction, assume $\Sigma$ has a nondisplaceable subsurface $S$. We can pick a pants decomposition $\Gamma$ of $\Sigma$ that restricts to a pants decomposition of S. If there is a hyperbolic structure $f:\Sigma \to X$ that has a dense orbit, then for any pants curve $\gamma$ in $S$, we have that the lengths in $X$ of the mapping class group orbit of $\gamma$ are  dense in $\mathbb{R}_{>0}$. Since $S$ is non-displaceable, every curve in the orbit of $\gamma$ intersects $S$. Since $S$ is compact, the lengths (with respect to $X$) of simple closed curves intersecting $S$ is discrete (see Lemma 3.1 in \cite{BaKim}). This is a contradiction. 
\end{proof}

\section{The trichotomy} \label{sec: trichotomy}

In the finite type setting, all mapping classes admit quasiconformal representatives. In the infinite type setting, there is a distinguished collection of {\it never quasiconformal mapping classes};  that is, they admit no quasiconformal representatives for any hyperbolic structure on the surface. 

Let  $\mathcal{D} \subset H(\Sigma)$ be a subspace which is path connected and invariant under the mapping class group.  Then for $\phi \in \Map(\Sigma)$ we say that $\phi$ relative to $\mathcal{D}$ is
	\begin{itemize}
	\item  {\it always quasiconformal} if for any hyperbolic structure $(X,f) \in \mathcal{D}$, $\phi$ admits a quasiconformal representative.
	\item {\it sometimes quasiconformal} if there exists hyperbolic structures $(X,f), (Y,g) \in \mathcal{D}$ such that $\phi$ admits a quasiconformal representative with respect to $(X,f)$ but not $(Y,g)$.
	\item {\it never quasiconformal} if there does not exist a hyperbolic structure $(X,f) \in \mathcal{D}$ for which $\phi$ admits a quasiconformal representative. 
	\end{itemize}

If there is no mention of  relative to a subspace then it is understood that the subspace is the full space $H(\Sigma)$.

Note that the three classes above are mutually exclusive and each $\phi \in \Map(\Sigma)$ falls into exactly one of the classes and therefore defines a trichotomy. In fact, this trichotomy is invariant under conjugation and therefore descends to any quotient of the mapping class group. In this section, we discuss this trichotomy from several perspectives. 
\vskip5pt

\noindent{\bf Analytic:}
The definition is given in terms of the existence of quasiconformal representatives which is an analytic notion. Equivalently, we can instead consider the existence of a hyperbolic structure $X$ such that $\phi$ acts with finite translation length on $\mathcal{T}(X)$. This view point is more geometric and directly analogous to Bers' proof in \cite{Bers}.
\vskip5pt 

\noindent{\bf Dynamical:}
The mapping class group also acts as permutations of the Teichm\"uller subspaces in $H(\Sigma)$. From this perspective, the trichotomy can be stated in terms of the existences of fixed points (ie invariant Teichm\"uller subspaces). In fact this perspective extends to a trichotomy for mapping classes acting on many objects associated to surfaces and in many cases the breakdown is the same. For example: $\mathcal{C}_P$, Pants graph, marking graph, and the flip graph.
\vskip5pt

\noindent{\bf Topological:}
For finite type surfaces, the Nielsen-Thurston classification, can be stated (and proved) in terms of the action of the mapping classes on simple closed curves. In this subsection, we give a topological characterization of always quasiconformal mapping classes.    
  
\vskip5pt
\noindent{\bf Definition.} 
	A mapping class $\phi \in \Map(\Sigma)$ is said to be {\it finitely supported} if outside some finite type subsurface, $\phi$ is isotopic to the identity. 
\vskip5pt
\noindent{\bf Definition.}
	We say that a mapping class, $\phi$ {\it acts non-trivially} on a simple closed curve $\alpha$ if either of the following two conditions hold: 
	\begin{enumerate}
		\item $\phi(\alpha) \neq \alpha$ 
		\item $\phi$ does not acts as the identity on an annular neighborhood of $\alpha$.
	\end{enumerate}

\begin{thm}\label{thm: finite support}
	A mapping class $\phi$ is always quasiconformal if and only if $\phi$ is finitely supported. 
\end{thm}

\begin{proof}
	If $\phi$ has finite support then it is supported on a finite type (possibly disconnected) subsurface $S\subset \Sigma$. Since $S$ is finite type, there is a quasiconformal map between any two complex structures on $S$. Therefore $\phi$ is always quasiconformal since it has a representative that acts as a quasiconformal map on $S$ and as the identity on $\Sigma \setminus S$.
	
	We will prove the other direction by showing that if $\phi$ is not finitely supported, then there exists a hyperbolic structure $X$ whose Teichm\"uller subspace is not fixed by $\phi$. Assume $\phi$ is not finitely supported. Then there exists a disjoint sequence of simple closed curves, $\{\gamma_i\}$, leaving every compact set such that $\phi$ acts non-trivially on each curve.
	
	Since $\phi$ acts non-trivially on each $\gamma_i$, by passing to an infinite subsequence we can assume that at least one of the following is true:
	\begin{enumerate}
		\item $i(\phi(\gamma_i) , \gamma_i) > 0$ for all $i \in \ZZ$ 
		\item $i(\phi(\gamma_i) , \gamma_i) = 0$ and $\phi$ does not acts as the identity on an annular neighborhood of $\gamma_i$ for all $i \in \ZZ$.
	\end{enumerate} 
	
	If the first condition holds, by passing to a subsequence, we can further assume that $i(\phi(\gamma_i) , \gamma_j ) =0$ for all $i\neq j$. We can pick a hyperbolic structure $(X,f)$ on $\Sigma$ where lengths of $\{\gamma_i\}$ go to $0$,  as $i$ goes to $\infty$. Since the $\phi(\gamma_i)$ intersects $\gamma_i$, the length of $\phi(\gamma_i)$ is bounded below by the width of the collar around $\gamma_i$. By the collar lemma, the ratio of lengths 
	$\frac{\ell_{X}(\phi(\gamma_i))}{\ell_{X}(\gamma_i)}$ goes to infinity as 
	$\ell_{X}(\gamma_i) \to 0$ and Wolpert's lemma tells us that $\Phi$ does not have a quasiconformal representative fixing $\mathcal{T}(X,f)$.
	
	Now assume that instead, the second condition holds. We can again pass to an infinite subsequence so that $i(\phi(\gamma_i) , \gamma_j ) =0$ for all $i\neq j$. For each $\gamma_i$, either $\phi(\gamma_i) = \gamma_i$ or $\phi(\gamma_i) \neq \gamma_i$. 
	
	If infinitely many $\gamma_i$ are fixed by $\phi$, we specify with Frenchel-Nielsen coordinates a hyperbolic structure, $X$, by expanding $\{\gamma_i\}$ to a pants decomposition $\Gamma$ and choosing length parameters that go to $\infty$ while all twists parameters are trivial. In $X$, let $\alpha_i$ be the shortest curve intersecting $\gamma_i$. The lengths of $\alpha_i$ go to $0$ as the length of $\gamma_i$ goes to $\infty$. Since $\phi(\gamma_i) = \gamma_i$, then $\phi$ restricted to an annular neighborhood of $\gamma_i$ is a Dehn twist. The triangle inequality then tells us that the length of $\phi(\alpha_i)$ is bounded below by the length of $\gamma_i$. Since the ratio of the lengths $\frac{\ell_{X}(\phi(\alpha_i))}{\ell_{X}(\alpha_i)}$ can be made arbitrarily large, $\phi$ can not fix such a $\mathcal{T}(X,f)$.
	
	If instead, infinitely many $\gamma_i$ are not fixed by $\phi$, we can extend the set $\{\gamma_i\} \cup \phi(\{\gamma_i\})$ to a pants decomposition. Again, we specify a hyperbolic structure, $X$ by choosing lengths of $\gamma_i$ and $\phi(\gamma_i)$ such that the ratio of the lengths $\phi(\alpha_i)/\alpha_i$ become arbitrarily large, $\phi$ can not fix such a $\Teich(X)$.
\end{proof}

Since any compactly supported mapping class is a finite composition of Dehn twists and there are only countably many simple closed curves on $\Sigma$, we get the following corollary.

\begin{cor}
	The set of always quasiconformal mapping classes is countable. 
\end{cor}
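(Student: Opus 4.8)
The plan is to reduce the statement, via Theorem~\ref{thm: finite support}, to a counting argument. By that theorem the always quasiconformal mapping classes are exactly the finitely supported ones, so it suffices to show that the set of finitely supported elements of $\Map(\Sigma)$ is countable. I would establish this by realizing every such element as a finite word in Dehn twists drawn from a fixed countable collection of simple closed curves on $\Sigma$.

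First I would unwind the definition: a finitely supported $\phi$ is isotopic to the identity outside some finite type subsurface $S \subset \Sigma$, hence is represented by a homeomorphism supported on $S$ and so determines a class in $\Map(S)$. Since $S$ has finite type, the Dehn--Lickorish theorem (see \cite{F-M}) guarantees that $\Map(S)$ is generated by finitely many Dehn twists about essential simple closed curves contained in $S$. As $S$ embeds in $\Sigma$, each such curve is in particular a simple closed curve on $\Sigma$, so $\phi$ is a finite composition $T_{\gamma_{1}}^{\pm 1}\cdots T_{\gamma_{k}}^{\pm 1}$ of Dehn twists about simple closed curves in $\Sigma$.

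Second, I would verify that the set $\mathcal{S}$ of isotopy classes of essential simple closed curves on $\Sigma$ is countable. Fixing the countable topological pants decomposition $\Gamma = \{\gamma_{i}\}$, any simple closed curve is determined up to isotopy by its finite pattern of intersection data with the $\gamma_{i}$, and there are only countably many such finite patterns. Granting this, the alphabet $\{T_{\gamma}^{\pm 1} : \gamma \in \mathcal{S}\}$ is countable; the set of finite words over a countable alphabet is a countable union over word length of countable sets, hence countable; and the evaluation map from finite words to $\Map(\Sigma)$ carries this countable set onto a set containing every finitely supported class. Therefore the finitely supported classes, and with them the always quasiconformal classes, form a countable set.

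The argument is largely routine once Theorem~\ref{thm: finite support} is in hand; the only points demanding care are the countability of $\mathcal{S}$ and the observation that the finite generating set for $\Map(S)$ may be taken to consist of Dehn twists about curves genuinely contained in $\Sigma$ rather than in an abstract model surface. An alternative, sidestepping an explicit appeal to Dehn--Lickorish, would instead note that there are only countably many finite type subsurfaces up to isotopy (each bounded by a multicurve from the countable set $\mathcal{S}$) and that $\Map(S)$ is countable for each, being finitely generated; the finitely supported classes are then the countable union of the images of these countably many countable groups.
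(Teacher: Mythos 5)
Your argument is correct and is essentially the paper's own proof: the paper likewise deduces the corollary from Theorem~\ref{thm: finite support} by noting that a finitely (compactly) supported class is a finite composition of Dehn twists and that $\Sigma$ carries only countably many simple closed curves up to isotopy. Your write-up simply makes explicit the details (Dehn--Lickorish, countability of the curve set) that the paper leaves as a one-line remark.
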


 \subsection{Sometimes quasiconformal maps.} This class contains all isometries, shifts, infinite multi-twists, and more. This class is closed under taking powers but not closed under composition.
 
 We next  use a  result of  Matsuzaki to show that the set of sometimes quasiconformal mapping classes is uncountable.  Let $\Sigma$ be an infinite type surface and $\{\gamma_{i}\}$ a disjoint set of 
 homotopically distinct  oriented simple closed curves.  Let $D_i$ be Dehn twist about $\gamma_{i}$.
 
 \begin{prop}
 For any sequence    $\{n_i \in \ZZ\}$ with infinitely many $n_i \neq 0$,
  the infinite multi-twist $ \prod_{i \in I} D_i ^{n_i}$ is sometimes quasiconformal. 
 \end{prop}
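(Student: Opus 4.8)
The plan is to exhibit two marked hyperbolic structures in $H(\Sigma)$, one for which the multi-twist $\phi := \prod_{i\in I} D_i^{n_i}$ admits a quasiconformal representative and one for which it does not. By the definition of the trichotomy (with $\mathcal{D} = H(\Sigma)$), producing such a pair is precisely what it means for $\phi$ to be sometimes quasiconformal. Throughout I would fix a topological pants decomposition $\Gamma$ of $\Sigma$ containing all the $\gamma_i$ (the $\gamma_i$ are disjoint and homotopically distinct, so such a completion exists) and work in the associated Fenchel--Nielsen coordinates, prescribing the length coordinate $\ell(\gamma_i)$ of each $\gamma_i$ independently. Since disjoint simple closed geodesics have disjoint standard collars, $\phi$ can be realized by a homeomorphism supported in these disjoint collars, so its dilatation is controlled curve by curve.

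For the quasiconformal structure $(X,f)$ I would set $\ell_X(\gamma_i) = 1/(1+|n_i|)$ and choose the remaining coordinates arbitrarily, so that $|n_i|\,\ell_X(\gamma_i) \le 1$ for every $i$. The collar of $\gamma_i$ has modulus comparable to $1/\ell_X(\gamma_i)$, so realizing the $n_i$-fold twist inside it requires a shear comparable to $|n_i|\,\ell_X(\gamma_i)\le 1$; the corresponding affine twist has dilatation bounded by a universal constant $K_0$ independent of $i$. Carrying out these twists simultaneously in the disjoint collars, and the identity elsewhere, yields a $K_0$-quasiconformal representative of $\phi$. Equivalently, this is Matsuzaki's upper estimate on the dilatation of a multi-twist from \cite{Ma1}, which gives that $\phi$ is quasiconformal with respect to $X$ precisely because $\sup_i |n_i|\,\ell_X(\gamma_i) < \infty$. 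Thus $\phi$ keeps $\mathcal{T}(X,f)$ invariant.

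For the non-quasiconformal structure $(Y,g)$ I would force a length-ratio blow-up exactly as in the proof of Theorem \ref{thm: finite support}. Choosing the length coordinates along $\Gamma$ so that $\ell_Y(\gamma_i)\to\infty$ along the indices with $n_i\neq 0$ (with trivial twists), the orthodistance estimate of Lemma \ref{lem: collar lemma} shows that as the boundary lengths grow the seams shrink, so the shortest curve $\alpha_i$ crossing $\gamma_i$ satisfies $\ell_Y(\alpha_i)\to 0$. In contrast, the $n_i$-fold twist forces $\phi(\alpha_i)$ to wind $|n_i|\ge 1$ times around $\gamma_i$, so $\ell_Y(\phi(\alpha_i))$ grows at least linearly in $|n_i|\,\ell_Y(\gamma_i)$ and in particular tends to $\infty$. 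Hence the ratios $\ell_Y(\phi(\alpha_i))/\ell_Y(\alpha_i)\to\infty$. Were $\phi$ to have a $K$-quasiconformal representative fixing $\mathcal{T}(Y,g)$, Wolpert's inequality would bound every such ratio by $K$, a contradiction; so $\phi$ admits no quasiconformal representative with respect to $(Y,g)$, and combining the two constructions shows $\phi$ is sometimes quasiconformal.

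The main obstacle is the non-quasiconformal direction: one must verify that the curves $\alpha_i$ genuinely detect the twisting while having lengths tending to $0$, and that no cleverer (non--collar-supported) quasiconformal representative can evade the blow-up. This is exactly where the collar geometry (long boundaries force short seams) combined with Wolpert's length-ratio bound does the work, matching Matsuzaki's lower estimate on multi-twist dilatations. A secondary point worth checking is that both $(X,f)$ and $(Y,g)$ can be arranged to satisfy any completeness constraint (for instance, to lie in $\mathcal{C}$) by a suitable choice of the coordinates away from the $\gamma_i$, so that the same conclusion holds relative to $\mathcal{C}$ as well.
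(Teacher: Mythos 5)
Your overall route is the same as the paper's: exhibit one structure where the twisting is quasiconformally tame and one where it blows up, and the paper does exactly this by citing Matsuzaki's Theorem~1 in \cite{Ma1} twice (lengths of the $\gamma_i$ tending to infinity kills any qc representative by the lower bound; lengths shrinking fast relative to $|n_i|$ gives a qc representative by the upper bound). Your quasiconformal direction, with $\ell_X(\gamma_i)=1/(1+|n_i|)$ and shears in disjoint collars, is a correct hands-on version of the upper bound. The problem is in your hands-on version of the non-quasiconformal direction. The claim that choosing $\ell_Y(\gamma_i)\to\infty$ with trivial twists forces the shortest transverse curve $\alpha_i$ to satisfy $\ell_Y(\alpha_i)\to 0$ is false in general. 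It is true only when $\gamma_i$ has the \emph{same} pair of pants on both of its sides, so that a transverse curve crosses $\gamma_i$ once and its length is comparable to the orthodistance between the two boundary copies, which the hexagon formulas do send to $0$. But the $\gamma_i$ in the proposition are arbitrary disjoint curves; if $\gamma_i$ is separating (or merely has distinct pants on its two sides in your completion $\Gamma$), every curve crossing $\gamma_i$ crosses it at least twice, and each of its two complementary arcs must encircle a neighboring pants curve. An arc from $\gamma_i$ back to itself around a boundary $\beta$ has length roughly $\ell(\beta)$ once the orthodistances are small (the arc together with a subarc of $\gamma_i$ is freely homotopic to $\beta$), so $\ell_Y(\alpha_i)$ does not tend to $0$: it is bounded away from $0$, and it even tends to infinity if the unspecified coordinates of $\Gamma$ are also sent to infinity. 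In that last configuration your test genuinely fails: for a bounded sequence, say all $n_i=1$, one gets $\ell_Y(\alpha_i)\asymp \ell_Y(\gamma_i)$ and $\ell_Y(\phi(\alpha_i))\asymp \ell_Y(\gamma_i)$ as well, so the Wolpert ratios stay bounded and detect nothing, even though (by Matsuzaki's lower bound) the mapping class is still not qc there.

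The repair is easy but must be said: fix the remaining curves of $\Gamma$ at bounded length (say $1$) and the twists at $0$, so that $\ell_Y(\alpha_i)=O(1)$ uniformly (boundedness, not smallness, is all the ratio argument needs), while $\ell_Y(\phi(\alpha_i))\gtrsim |n_i|\,i(\alpha_i,\gamma_i)\,\ell_Y(\gamma_i)\to\infty$; then Wolpert's inequality rules out any qc representative, with no need for the curves to be ``short seams.'' Alternatively, do what the paper does and simply invoke Matsuzaki's lower bound for the mapping class itself, which also disposes of your worry about ``cleverer'' non--collar-supported representatives in one stroke. As written, though, the step you yourself single out as carrying the weight of the proof rests on a geometric claim that fails for separating $\gamma_i$, so the argument has a genuine gap.
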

 
 \begin{proof}   The proof is an application of Theorem 1 in   \cite{Ma1}. Namely,
 given the sequence  $\{n_i\}$ choose the hyperbolic structure $X$  so  that  the lengths  of the $\gamma_{i}$ go to infinity.  Now the lower bound in 
 Theorem 1 of \cite{Ma1} implies that there is not qc representative. On the other hand by choosing the lengths of the $\gamma_{i}$ to go to zero faster than the $n_{i}$ go to infinity then the right-hand inequality of   Theorem 1 of \cite{Ma1}   shows that the dilatation is bounded above and hence $\phi$ does have a qc representative. Thus we conclude that the mapping class is sometimes qc.
  \end{proof}

 As a consequence there are uncountably many sometimes qc mappings.

  \begin{rem}
  In the proof above, using  \cite{BaSa},  each   choice of $X$ can be made so that  $X$ is either  in $\mathcal{C}$ or $\mathcal{C}^{\prime}$. This is accomplished by choosing appropriate twists. 
  \end{rem}

 In the next section we construct  uncountably many never quasiconformal mapping classes.  
  
 \begin{prop}
 The class types   $\{\text{Always qc classes}\},\{\text{Sometimes qc  classes}\},$ and $\{\text{Never qc  classes}\}$ in $\Map(\Sigma)$ are invariant  under conjugation. Moreover, since always qc mapping classes form a group, this group is normal in $\Map(\Sigma)$.
 \end{prop}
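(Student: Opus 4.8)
The plan is to reduce everything to one compatibility statement between conjugation in $\Map(\Sigma)$ and the action of $\Map(\Sigma)$ on $H(\Sigma)$ by remarking. Concretely, I would first establish the key lemma: for all $\phi,\psi\in\Map(\Sigma)$ and all $(X,f)\in H(\Sigma)$,
\[
\phi \text{ is qc with respect to } (X,f)
\quad\Longleftrightarrow\quad
\psi\phi\psi^{-1} \text{ is qc with respect to } \psi\cdot(X,f).
\]
For the forward direction, suppose $q\colon X\to X$ is a quasiconformal map with $f\circ\phi\simeq q\circ f$ (commuting up to isotopy). Writing $g=f\circ\psi^{-1}$, so that $\psi\cdot(X,f)=(X,g)$, I would compute
\[
g\circ(\psi\phi\psi^{-1})=f\circ\phi\circ\psi^{-1}\simeq q\circ f\circ\psi^{-1}=q\circ g,
\]
which exhibits the \emph{same} map $q$ as a quasiconformal representative of $\psi\phi\psi^{-1}$ with respect to $(X,g)$. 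The reverse implication follows by applying this with $\phi$ replaced by $\psi\phi\psi^{-1}$ and $\psi$ by $\psi^{-1}$, so the statement is genuinely a biconditional.

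Next I would transfer this to the trichotomy relative to any path-connected, $\Map$-invariant subspace $\mathcal{D}$. Fix $\psi$ and set $S(\phi)=\{(X,f)\in\mathcal{D}: \phi \text{ is qc w.r.t. } (X,f)\}$. Because $\mathcal{D}$ is $\Map$-invariant, the homeomorphism $(X,f)\mapsto\psi\cdot(X,f)$ restricts to a bijection of $\mathcal{D}$ onto itself, and the key lemma says precisely that this bijection carries $S(\phi)$ onto $S(\psi\phi\psi^{-1})$. Consequently $S(\phi)=\mathcal{D}$ iff $S(\psi\phi\psi^{-1})=\mathcal{D}$, and $S(\phi)=\emptyset$ iff $S(\psi\phi\psi^{-1})=\emptyset$; the remaining case, in which $S(\phi)$ is a nonempty proper subset, is preserved as well. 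By the definitions of the three types this shows that conjugation by $\psi$ sends always qc to always qc, never qc to never qc, and sometimes qc to sometimes qc, relative to $\mathcal{D}$. Hence all three classes are conjugation invariant, and (as noted in the text) the trichotomy descends to any quotient of $\Map(\Sigma)$.

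For the normality statement I would verify that the always qc classes form a subgroup directly from the diagram definition: the identity is qc with respect to every structure; if $q$ is a qc representative of $\phi$ then $q^{-1}$ is a qc representative of $\phi^{-1}$ with respect to the same structure; and if $q_\phi,q_{\phi'}$ represent $\phi,\phi'$ with respect to $(X,f)$ then $q_\phi\circ q_{\phi'}$ represents $\phi\phi'$, since compositions and inverses of quasiconformal maps are quasiconformal. Thus, for each fixed $(X,f)\in\mathcal{D}$, the classes that are qc with respect to $(X,f)$ form a subgroup, and the always qc classes, being the intersection of these subgroups over all $(X,f)\in\mathcal{D}$, form a subgroup $N\le\Map(\Sigma)$. (Alternatively one may simply invoke Theorem \ref{thm: finite support}, since the finitely supported mapping classes obviously form a group.) Combining this with the conjugation invariance from the previous paragraph gives $\psi N\psi^{-1}=N$ for every $\psi$, so $N$ is normal.

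The main obstacle I anticipate is not conceptual but bookkeeping: one must pin down the handedness of the $\Map$-action (precomposition with $\psi^{-1}$) and chase the commuting-up-to-isotopy diagrams so that the same qc map $q$ reappears after conjugation. A direction slip in that computation is essentially the only place the argument can go wrong, and once it is set up correctly the three invariance statements and the normality of $N$ all drop out uniformly, for $\mathcal{D}$ equal to $H(\Sigma)$, $\mathcal{C}_P$, $\mathcal{C}$, or $\mathcal{C}^{\prime}$.
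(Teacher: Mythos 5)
Your proposal is correct, and it supplies precisely the argument the paper leaves implicit: the proposition is stated in the paper without any proof (it is treated as immediate from the definitions, as in the earlier remark that the trichotomy ``is invariant under conjugation and therefore descends to any quotient''). Your key lemma --- that $\phi$ is qc with respect to $(X,f)$ if and only if $\psi\phi\psi^{-1}$ is qc with respect to $\psi\cdot(X,f)$, witnessed by the \emph{same} quasiconformal map $q$ --- is the right equivariance statement, your handedness bookkeeping with the action $\psi\cdot(X,f)=(X,f\circ\psi^{-1})$ checks out, and both the subgroup argument for the always qc classes and the alternative appeal to Theorem \ref{thm: finite support} are valid routes to normality.
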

 
 We have proven

\begin{thm}  \label{thm: type classification}
 $\Map(\Sigma)$ acts faithfully as a group of homeomorphisms of 
$H(\Sigma)$,  keeps invariant $\mathcal{C}_{P} \subset H(\Sigma)$, and 
permutes  the set of Teichm\"uller subspaces of $\mathcal{C}_{P}$. 
Moreover, for $\phi \in \Map(\Sigma)$ 
\begin{itemize}
\item $\phi$ is always quasiconformal rel. $\mathcal{C}_{P}$  if and only if  $\phi$ keeps invariant  every Teichm\"uller subspace in $\mathcal{C}_{P}$  if and only if $\phi$ is finitely  supported. 

\item $\phi$ is sometimes  quasiconformal rel. $\mathcal{C}_{P}$ if and only if there exists at least one Teichm\"uller subspace  in $\mathcal{C}_{P}$  that is kept invariant and at least one in $\mathcal{C}_{P}$ that is not kept invariant. 

\item $\phi$ is never quasiconformal rel. $\mathcal{C}_{P}$ if and only if   there is no Teichm\"uller subspace in  $\mathcal{C}_{P}$ kept invariant by 
 $\phi$.
\end{itemize}
\end{thm}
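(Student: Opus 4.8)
The plan is to establish the three structural assertions first, and then to collapse the entire trichotomy onto a single dictionary between the analytic notion ``$\phi$ admits a quasiconformal representative with respect to $(X,f)$'' and the dynamical notion ``$\phi$ keeps the Teichm\"uller subspace $\mathcal{T}(X,f)$ invariant.''

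For the structural part, the action by homeomorphisms and the invariance of $\mathcal{C}_P$ are already in hand (the lemma that $\Map(\Sigma)$ acts by homeomorphisms, and item~(2) of the subsequent lemma). To see the action is \emph{faithful}, I would fix a marked structure $(X,f)$ whose underlying surface carries no nontrivial orientation-preserving isometry — a generic choice of Fenchel--Nielsen coordinates yields such a surface. If $\phi$ acted trivially on $H(\Sigma)$ then $(X,f\circ\phi^{-1})=(X,f)$, and the absence of isometries forces the defining isometry in diagram \eqref{diagram: commuting up to homotopy} to be the identity, whence $f\circ\phi^{-1}\simeq f$ and so $\phi\simeq\id$ in $\Map(\Sigma)$. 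For the \emph{permutation} statement, I would compute directly from Lemma \ref{lem: Teich subspaces disjoint}(2): if $(Y,g)\in\mathcal{T}(X,h)$ then $g\circ h^{-1}$ is homotopic to a qc map, and since $(g\circ\phi^{-1})\circ(h\circ\phi^{-1})^{-1}=g\circ h^{-1}$, the images $\phi\cdot(Y,g)$ and $\phi\cdot(X,h)$ again lie in a common Teichm\"uller subspace. Thus $\phi\cdot\mathcal{T}(X,h)=\mathcal{T}(\phi\cdot(X,h))$, and applying the same to $\phi^{-1}$ shows $\phi$ permutes the Teichm\"uller subspaces; since $\mathcal{C}_P$ is $\Map$-invariant and a disjoint union of full Teichm\"uller subspaces (Corollary \ref{cor: disjoint union of Teichspaces}), this permutation restricts to the subspaces inside $\mathcal{C}_P$.

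The heart of the trichotomy is the proposition (deduced earlier from Lemma \ref{lem: Teich subspaces disjoint}) that $\phi$ is a qc mapping class with respect to $(X,h)$ — equivalently, admits a qc representative there — if and only if $\phi$ keeps $\mathcal{T}(X,h)$ invariant. Reading the three defining clauses through this equivalence is then immediate: ``never qc rel $\mathcal{C}_P$'' says no $(X,f)\in\mathcal{C}_P$ admits a qc representative, i.e. no Teichm\"uller subspace in $\mathcal{C}_P$ is kept invariant; ``sometimes qc rel $\mathcal{C}_P$'' says some $(X,f)$ admits one while some $(Y,g)$ does not, i.e. at least one subspace is kept invariant and at least one is not; and the first half of the ``always qc'' clause says exactly that every Teichm\"uller subspace in $\mathcal{C}_P$ is kept invariant. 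Mutual exclusivity and exhaustiveness of the three mapping-class types then match the evident trichotomy ``every / some-but-not-all / no subspace invariant.''

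It remains to identify ``keeps every Teichm\"uller subspace in $\mathcal{C}_P$ invariant'' with ``finitely supported.'' One direction is free: if $\phi$ is finitely supported then by Theorem \ref{thm: finite support} it is always quasiconformal on all of $H(\Sigma)$, hence a fortiori on $\mathcal{C}_P$, so it fixes every Teichm\"uller subspace there. The substantive converse is where the main obstacle lies, and I would argue it contrapositively. The construction in the proof of Theorem \ref{thm: finite support} produces, from an escaping sequence of curves on which $\phi$ acts nontrivially, a structure where a ratio $\ell_X(\phi(\alpha_i))/\ell_X(\alpha_i)$ is unbounded, so that (by the collar lemma and Wolpert's inequality, which bound length distortion under a qc map) no qc representative can fix its Teichm\"uller subspace. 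The delicate point — the step I expect to be hardest — is that this witnessing structure must be placed \emph{inside} $\mathcal{C}_P$, that is, it must satisfy the qc-invariant property $P$ and have all isolated planar ends cusped. I would handle this by exploiting that $\mathcal{C}_P$ is nonempty, path connected, and a disjoint union of full Teichm\"uller subspaces: starting from any $(X_0,f_0)\in\mathcal{C}_P$ and using the zig-zag Lemma \ref{lem: zig-zag path} together with the qc-invariance of $P$, one realizes the prescribed length behavior on the curves $\gamma_i$ and their shortest transversals while remaining in $\mathcal{C}_P$. Since the obstruction to being a qc mapping class is detected entirely by these length ratios, the resulting structure witnesses a Teichm\"uller subspace in $\mathcal{C}_P$ that $\phi$ fails to keep invariant, completing the equivalence and hence the theorem.
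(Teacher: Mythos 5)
Your overall architecture matches the paper's: the theorem is assembled from the lemma that $\Map(\Sigma)$ acts by homeomorphisms, the $\Map$-invariance of $\mathcal{C}_P$, the proposition that $\phi$ admits a qc representative with respect to $(X,h)$ if and only if $\phi$ keeps $\mathcal{T}(X,h)$ invariant, and Theorem~\ref{thm: finite support}. Your structural additions are correct and in places more complete than the paper itself: the faithfulness argument via a structure with trivial isometry group, and the computation $(g\circ\phi^{-1})\circ(h\circ\phi^{-1})^{-1}=g\circ h^{-1}$ showing that $\phi\cdot\mathcal{T}(X,h)=\mathcal{T}\bigl(\phi\cdot(X,h)\bigr)$, are both sound and are steps the paper leaves implicit. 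The reading of the three bullet points through the invariance dictionary is likewise fine.

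The genuine gap is exactly at the step you flagged as hardest, and your proposed repair does not close it. Lemma~\ref{lem: zig-zag path} takes as \emph{input} two points already known to lie in $\mathcal{C}_P$ and connects them; it cannot manufacture a point of $\mathcal{C}_P$ with prescribed Fenchel--Nielsen data. Worse, for a concrete qc-invariant property the prescription coming from the proof of Theorem~\ref{thm: finite support} can be flatly incompatible with $P$: take $P$ to be ``systole bounded below by a positive constant'' (the property the paper itself uses in Section~\ref{sec: rel. trichotomies}). Case~(1) of that proof demands a structure with $\ell_X(f(\gamma_i))\to 0$, and \emph{no} structure in $\mathcal{C}_P$ satisfies this, so no appeal to path connectivity or qc-invariance can ``realize the prescribed length behavior while remaining in $\mathcal{C}_P$''; the prescription itself must change. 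The robust fix is to move the marking rather than the geometry: fix any $(X_0,h_0)\in\mathcal{C}_P$; by $\Map$-invariance $(X_0,h_0\psi)\in\mathcal{C}_P$ for every $\psi\in\Map(\Sigma)$, and $\phi$ is qc with respect to $(X_0,h_0\psi)$ if and only if $\psi\phi\psi^{-1}$ is qc with respect to $(X_0,h_0)$. So it suffices to show that a non-finitely-supported $\phi$ has some conjugate that is not qc with respect to the \emph{fixed} structure. Conjugating by infinite multi-twists does this: in case~(1), with $\psi=\prod_j T_{\gamma_j}^{m_j}$ one has $\psi(\gamma_i)=\gamma_i$ while $\psi\phi\psi^{-1}(\gamma_i)=T_{\gamma_i}^{m_i}(\phi(\gamma_i))$ has $X_0$-length growing linearly in $m_i$, so rapidly growing $m_i$ make the ratios $\ell_{X_0}(h_0\psi\phi(\gamma_i))/\ell_{X_0}(h_0(\gamma_i))$ unbounded and Wolpert's lemma excludes any qc representative; in case~(2) one conjugates by twists along transversals so that the curves carrying the annular twisting become arbitrarily long in $X_0$ and invokes Matsuzaki's lower bound \cite{Ma1}. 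I note that the paper's own write-up is silent on this point --- Theorem~\ref{thm: finite support} chooses its witnessing structures freely in $H(\Sigma)$ --- so you have correctly isolated a real subtlety; but your mechanism for resolving it would fail, whereas the conjugation argument uses only nonemptiness and $\Map$-invariance of $\mathcal{C}_P$, which is all one may assume for an arbitrary property $P$.
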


\section{Constructing never qc maps} \label{sec: Constructing never qc maps}
In this section we apply a theorem of Bers (Theorem 2 in \cite{Bers}) to construct an uncountable class of never quasiconformal mapping classes on any infinite type surface $\Sigma$. This class is constructed by taking high powers of partial pseudo-Anosov maps on disjoint finite subsurfaces. Similar constructions of such mapping classes have been used in other studies such as \cite{DFH}.

Let $\{\Sigma_i\}_{i \in I}$ be an infinite collection of pairwise disjoint finite type subsurfaces of $\Sigma$. Let $\phi_i \in \Map{(\Sigma_i)} \subset \Map{(\Sigma)}$ be a pseudo-Anosov mapping class such that $\inf{\tau(\phi_i)} > 0$ where $\tau(\phi_i)$ is translation length of $\phi_i$ in $\mathcal{T}(\Sigma_i)$ with the Teichm\"uller metric. 

\begin{thm}\label{thm: never qc}
	The infinite direct product $\phi = \prod_{i \in I} \phi_i ^{n_i}$ is never quasiconformal for any unbounded sequence $\{n_i \in \ZZ\}$.
\end{thm}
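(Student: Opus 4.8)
The plan is to argue by contradiction, reducing to the finite type pieces $\Sigma_i$ and applying Bers' lower bound on the dilatation of a map homotopic to a pseudo-Anosov. Suppose that some $(X,f)\in H(\Sigma)$ admits a quasiconformal representative $q\colon X\to X$ of $\phi$, with dilatation $K(q)=K<\infty$, so that $q\circ f\simeq f\circ\phi$ up to isotopy. Because the $\Sigma_i$ are pairwise disjoint and $\phi_i^{n_i}$ is supported on $\Sigma_i$, the class $\phi$ preserves the isotopy class of each $\Sigma_i$ and restricts there to $\phi_i^{n_i}$. My first step is to extract from $q$, for each $i$, a quasiconformal representative of $\phi_i^{n_i}$ on a finite type hyperbolic surface, with dilatation no larger than $K$.

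To carry this out I would pass to covers. Let $X_i\to X$ be the cover corresponding to the subgroup $f_*\pi_1(\Sigma_i)\le\pi_1(X)$. Since $\pi_1(\Sigma_i)$ is finitely generated, $X_i=\HH/f_*\pi_1(\Sigma_i)$ is a topologically finite type hyperbolic surface marked by $f$ as a point of $\mathcal{T}(\Sigma_i)$. As $q$ preserves the isotopy class of $\Sigma_i$, one has $q_*\big(f_*\pi_1(\Sigma_i)\big)=f_*\pi_1(\Sigma_i)$ up to conjugacy, so $q$ lifts to a quasiconformal self-map $q_i\colon X_i\to X_i$ lying in the homotopy class of $\phi_i^{n_i}$. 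Lifting to a cover pulls back the Beltrami coefficient and hence does not increase the dilatation, so $K(q_i)\le K$. This reduction is the crux of the argument and the step I expect to be the main obstacle: it is what transfers the single global hypothesis of quasiconformality into a uniform finite type bound on every piece, where the pseudo-Anosov theory is available, and it must be done without inflating the dilatation.

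Finally I would invoke Bers (Theorem 2 of \cite{Bers}). Since $\phi_i$ is pseudo-Anosov, $\tau(\phi_i)=\log\lambda_i$ for its stretch factor $\lambda_i>1$, this translation length is realized along the Teichm\"uller axis, and powers translate along that same axis, so $\tau(\phi_i^{n_i})=|n_i|\,\tau(\phi_i)$. Bers' theorem then bounds the dilatation of any quasiconformal map homotopic to $\phi_i^{n_i}$, with respect to any conformal structure, below in terms of the translation length, giving $\tfrac{1}{2}\log K(q_i)\ge \tau(\phi_i^{n_i})$. Combining this with $K(q_i)\le K$ yields $\tfrac{1}{2}\log K\ge \tau(\phi_i^{n_i})=|n_i|\,\tau(\phi_i)\ge |n_i|\,\tau_0$, where $\tau_0:=\inf_i\tau(\phi_i)>0$ by hypothesis. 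As $\{n_i\}$ is unbounded, $\sup_i|n_i|=\infty$, forcing $\log K=\infty$ and contradicting $K<\infty$. Since $(X,f)$ was arbitrary, no convex hyperbolic structure admits a quasiconformal representative of $\phi$, that is, $\phi$ is never quasiconformal.
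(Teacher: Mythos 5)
Your argument is correct and is essentially the paper's own proof: assume a $K$-quasiconformal representative exists, transfer it to each finite type piece $\Sigma_i$ without increasing dilatation, and contradict Bers' lower bound $\tfrac{1}{2}\log K \ge \tau(\phi_i^{n_i}) = |n_i|\,\tau(\phi_i) \ge |n_i|\,\inf_j \tau(\phi_j)$ using the unboundedness of $\{n_i\}$. The only real difference is that where the paper simply asserts that the quasiconformal constant of $\phi$ restricted to a subsurface is at most that of the global map, you justify this step by lifting $q$ to the cover $X_i$ corresponding to $f_*\pi_1(\Sigma_i)$, which is the standard rigorous way to make that restriction inequality precise.
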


\begin{proof}
	By Bers' theorem, $\tau(\phi_i)$ is realized by a conformal structure of the first kind. Since each $\phi_i$ is a pseudo-Anosov, $\tau(\phi_i^n) = n\tau(\phi_i)$ for any $n \in \ZZ$. Let $X$ be a conformal structure on $\Sigma$ and $X_i$ the induced conformal structure on $\Sigma_i$. The quasiconformal constant of $\phi$ with respect to $X$ is bounded below by the quasiconformal constant of $\phi$ restricted to any subsurface, in particular the dilatation of $\phi_i^{n_i}$ is less than or equal to the dilatation of $\phi$.
	
	Since we chose $\phi$ such that $\{\tau(\phi_i)\}$ is bounded away from $0$, the dilatation of $\phi_i^{n_i}$ goes to $\infty$ as $i$ goes to $\infty$ and we conclude that $\phi$ is not quasiconformal with respect to any conformal structure $X$.
\end{proof}

We remark that the never qc mapping classes of Theorem \ref{thm: never qc}   are never qc relative to any  subspace that is path connected and $\Map$ invariant. In particular, they are never qc relative to the geodesically complete structures $\mathcal{C}$. In section  \ref{sec: rel. trichotomies}, we construct mapping classes that are never qc relative to certain subspaces but not others. 


\section{\text{MCG}  action on Teichm\"uller spaces}\label{sec: MCG action 
on Teichmuller spaces}
First we discuss some basics.

\begin{definition}
Let $G_1$ and $G_2$ be actions by homeomorphisms on the topological space $M$. We say that $G_1$ is  orbit eqiuivalent to $G_{2}$  if
for every $p \in M$, the $G_1$ orbit of $p$ equals the $G_2$ orbit of 
$p$.
\end{definition}

Let $X$ be a Riemann surface, $\mathcal{T}(X)$  its associated  Teichm\"uller space, and  $\Map{(X)}$ the  mapping class group of the underlying topological surface $X$. 

\begin{definition}
The {\it modular group} of $X$, denoted $\text{Mod}(X)$, is the  subgroup of 
$\Map(X)$ of isotopy classes which contain a quasiconformal homeomorphism. The automorphism  group of $X$,  denoted $\text{Aut}(X)$,
is the group of conformal self-maps of $X$. 
\end{definition}

Since a conformal self-map  isotopic to the identity is the identity, there is a natural injection  of  $\text{Aut}(X)$ in $\text{Mod}(X)$. So we have the containments,
$$
 \text{Aut}(X) \subset  \text{Mod}(X) \subset \Map(X).
 $$
 
 Now, suppose $G$ is an abstract group, and 
 $\rho : G \rightarrow \Map(X)$  is a representation that  induces an action, denoted $\ast$,  of $G$ on $\mathcal{T}(X)$. Namely, 
 $$
 G \times \mathcal{T}(X) \rightarrow \mathcal{T}(X) \text{ given by}
 $$
 $$
 g \ast [q: X \rightarrow Y] \mapsto [q \circ \rho (g^{-1}) : X \rightarrow Y] 
 $$
 
 We emphasize  that $G$  can be any abstract group. We have
 
 \begin{lem}\label{lem: group action on Teich}
 Suppose the $G$-action induced by $\rho$ on $\mathcal{T}(X)$ is orbit equivalent to the action of $\text{Mod}(X)$.  Then 
 $\rho (G) =  \text{Mod}(X).$
 \end{lem}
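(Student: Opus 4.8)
The plan is to reduce the statement to a computation of point stabilizers for the action of $\text{Mod}(X)$ on $\mathcal{T}(X)$, combined with the existence of a single fixed-point-free orbit. First I would observe that the hypothesis that $\rho$ induces an action on $\mathcal{T}(X)$ already forces $\rho(G) \subseteq \text{Mod}(X)$: for the class $g \ast [q\colon X \to Y] = [q \circ \rho(g^{-1})\colon X \to Y]$ to be a genuine point of $\mathcal{T}(X)$, its representative $q \circ \rho(g^{-1})$ must be quasiconformal, and since $q$ is quasiconformal this holds if and only if $\rho(g^{-1})$ admits a quasiconformal representative, i.e. $\rho(g^{-1}) \in \text{Mod}(X)$. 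Writing $H := \rho(G)$ and $\Gamma := \text{Mod}(X)$, we thus have $H \le \Gamma$ (the image of a homomorphism is a subgroup), and the orbit-equivalence hypothesis reads $H \cdot p = \Gamma \cdot p$ for every $p \in \mathcal{T}(X)$. The goal is then to upgrade this equality of orbits to the equality $H = \Gamma$.

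Next I would identify point stabilizers. For $p = [f\colon X \to Y] \in \mathcal{T}(X)$, an element $\gamma \in \Gamma$ fixes $p$ precisely when $f \circ \gamma^{-1}$ is equivalent to $f$ in the sense of diagram \eqref{diagram: commuting up to homotopy}, i.e. when there is a conformal $c\colon Y \to Y$ with $c \circ f \circ \gamma^{-1}$ homotopic to $f$. Thus $\gamma$ corresponds, via the marking $f$, to a conformal automorphism of $Y$; and since a conformal self-map homotopic to the identity is the identity, the natural map $\text{Aut}(Y) \hookrightarrow \text{Mod}(Y)$ is injective and $\text{Stab}_\Gamma(p) \cong \text{Aut}(Y)$. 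In particular the $\Gamma$-action on $\mathcal{T}(X)$ is free at $p$ exactly when $Y$ carries no nontrivial conformal automorphism. With this in hand the deduction is immediate once we produce one automorphism-free point $p^{\ast} = [f\colon X \to Y^{\ast}]$: for arbitrary $\gamma \in \Gamma$, orbit equivalence at $p^{\ast}$ gives $h \in H$ with $h \cdot p^{\ast} = \gamma \cdot p^{\ast}$, whence $h^{-1}\gamma \in \text{Stab}_\Gamma(p^{\ast}) = \{\id\}$ and so $\gamma = h \in H$. This yields $\Gamma \subseteq H$, and therefore $\rho(G) = H = \Gamma = \text{Mod}(X)$.

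The one substantive point — and the step I expect to be the main obstacle — is the existence of such a $Y^{\ast}$, that is, a Riemann surface quasiconformally equivalent to $X$ admitting no nontrivial conformal automorphism. I would establish this in Fenchel-Nielsen coordinates: fix a pants decomposition of $X$ and choose the length parameters $\{\ell_i\}$ to be pairwise distinct and sufficiently generic that no two distinct simple closed geodesics share a length. Since any conformal automorphism is an isometry of the hyperbolic metric, it permutes the simple closed geodesics while preserving their lengths, and the genericity of $\{\ell_i\}$ then forces it to fix each pants geodesic; a further generic choice of the remaining coordinates rules out the residual length-preserving symmetries, leaving only $\id$. Because $\mathcal{T}(X)$ is infinite-dimensional, such generic coordinates are available (the automorphism-bearing locus being a thin subset), which produces the required fixed-point-free point and completes the argument. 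The care needed to verify that genericity truly kills all symmetries — rather than merely fixing the pants curves setwise — is where the real work lies, and I would isolate it as a preliminary claim before invoking the stabilizer computation above.
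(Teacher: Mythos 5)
Your argument follows essentially the same route as the paper's. The containment $\rho(G)\le \Mod(X)$ is in both cases a one-point computation: you extract it from well-definedness of the $\ast$-action (a legitimate reading of ``induces an action on $\mathcal{T}(X)$''), while the paper derives it from orbit equivalence at the basepoint $[\id\co X\to X]$; the two are interchangeable. The reverse containment is the paper's argument verbatim: choose $p^{*}=[q\co X\to Y]$ whose $\Mod(X)$-stabilizer is trivial, equivalently $\Aut(Y)$ trivial, and apply orbit equivalence at $p^{*}$; your bookkeeping $h^{-1}\gamma\in\Stab_{\Mod(X)}(p^{*})=\{\id\}$ is exactly the paper's commutative-diagram step.

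Where you genuinely depart from the paper is in trying to \emph{prove} the existence of the automorphism-free point $Y^{*}$, which the paper simply asserts (``choose a point \dots for which the stabilizer \dots is trivial''). You are right that this is the substantive step, but your genericity argument does not cover the lemma's stated generality: the lemma allows $X$ to be an arbitrary Riemann surface, whereas your argument invokes infinite-dimensionality of $\mathcal{T}(X)$. For finite-type $X$ of high complexity this is repairable (finitely many Fenchel-Nielsen coordinates, Baire category). But for the exceptional low-complexity surfaces no such $Y^{*}$ exists at all: every once-punctured torus and every closed genus-$2$ surface carries a conformal (hyper)elliptic involution, so \emph{every} point of $\mathcal{T}(X)$ has nontrivial stabilizer, and your claim that the automorphism-bearing locus is thin is false there. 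Worse, for the thrice-punctured sphere $\mathcal{T}(X)$ is a single point and the statement itself fails as written (take $G$ trivial: orbit equivalence is vacuous, yet $\Mod(X)\cong S_{3}$). So there is a genuine gap here --- but it is one you share with, and indeed inherit from, the paper's own proof, which silently assumes the trivial-stabilizer point exists. Your writeup at least makes the assumption visible; closing it fully requires either restricting $X$ (high-complexity finite type or infinite type) or treating the exceptional surfaces separately, e.g.\ using the fact that their hyperelliptic involutions act trivially on Teichm\"uller space.
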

 
 \begin{proof} 
 Suppose $g \in G$. Then there exists $q : X \rightarrow X \in 
 \text{Mod}(X)$ so that 
 $$
 [\rho(g^{-1}): X \rightarrow X]:= g \ast [id: X \rightarrow X] = [id \circ q^{-1}]
  $$
  where the last equality is  by  orbit equivalence of the  two actions.  
   But this implies there exists a conformal self-map
 $c: X \rightarrow  X$ so that $\rho(g)$ is isotopic to 
 $q^{-1}  \circ c^{-1}  \in \text{Mod}(X)$. Therefore, 
 $\rho(G) \leq \text{Mod}(X)$. 
 
 To prove the other containment,   
 choose a point $[q : X \rightarrow Y] \in \mathcal{T}(X)$ for which 
   the stabilizer of $[q : X \rightarrow Y]$ in  $\text{Mod}(X)$  is trivial, equivalently $\text{Aut}(Y)$ is trivial.  
    Now, for  $\phi \in \text{Mod(X)}$,   there
 exists $g \in G$ so that 
  $$
 [q \circ \rho(g^{-1}): X \rightarrow Y]:= g \ast [q: X \rightarrow Y] =
  [q \circ \phi^{-1}:X \rightarrow Y]
  $$
  where the last equality is by the orbit equivalence of the $G$ and 
 $\text{Mod}(X)$-actions. Hence, since $\text{Aut}(Y)$ is trivial we have the diagram, 
 
 \begin{equation}\label{diagram: image of Teich space}
	\begin{tikzcd} 
	X   \arrow{r}{q \circ \phi^{-1}} \arrow[swap]{dr}{q \circ \rho (g^{-1}) } & Y  \arrow{d}{id} \\
		& Y
	\end{tikzcd}
\end{equation}
Equivalently, $\rho (g)=\phi$, and thus  $\rho(G) \geq \text{Mod}(X)$ finishing the proof. 
  \end{proof}
  
\noindent{\bf Algebraic Characterization of Dehn Twists:}
  In \cite{BRD} the authors prove that a mapping class has compact support if and only if its conjugacy class is countable. They then combine this with Ivanov's algebraic description of Dehn twists in terms of their centralizers for finite type mapping class groups \cite{Iv} to get an algebraic characterization of Dehn twists in big mapping class groups briefly summarized below.
 
Let $G \leq \Map(\Sigma)$ be a subgroup containing all finitely supported mapping classes. We denote by $\mathcal{F}_G$ the set of elements in $G$ with countable conjugacy class.
We say that an element $f \in G$ is in $\mathcal{M}_G$ if:
\begin{enumerate}
\item $f$ has a countable conjugacy class i.e. $f \in \mathcal{F}_G$,
\item  $Z(\mathcal{F}_G \cap C_G(f))$ is infinite cyclic and generated by $f$ and,
\item $C_G(f) = C_G(f^k)$ for all $k \geq 1$.
\end{enumerate}
 
For $f \in \mathcal{M}_G$, define $(\mathcal{M}_G)_f := \{g \in \mathcal{M}_G \vert fg = gf\}$.
 
We say that $f \in G$ satisfies the \textit{algebraic twist condition} if $f \in \mathcal{M}_G$ and $(\mathcal{M}_G)_f$ is maximal with respect to inclusion. It is shown in \cite{BRD} that if $G = \Map(\Sigma)$ then $f \in G$ satisfies the algebraic twist condition if and only if $f$ is a Dehn twist. In general, if $G$ contains all finitely supported mapping classes then Dehn twists in $G$ are a subset of the elements satisfying the algebraic twist condition.

  \begin{lem}\label{lem: no isomorphism} 
  Let $\Sigma$ be an infinite type surface and $X$ a hyperbolic surface. Then there does not exist a group  isomorphism $\rho: MCG(\Sigma) \to Mod(X)$.
  \end{lem}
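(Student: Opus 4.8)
The plan is to argue by contradiction: suppose $\rho \colon \Map(\Sigma) \to \Mod(X)$ is an isomorphism with $\Sigma$ of infinite type, and derive a contradiction. First I would dispose of the case that $X$ has finite topological type. In that case every mapping class of $X$ admits a quasiconformal (indeed Teichm\"uller) representative, so $\Mod(X) = \Map(X)$, which is finitely generated and hence countable. On the other hand, since $\Sigma$ is of infinite type it carries infinitely many disjoint essential simple closed curves $\{\gamma_i\}$, and the infinite multitwists $\prod_i D_{\gamma_i}^{n_i}$ for $(n_i) \in \ZZ^{\NN}$ are pairwise distinct; thus $\Map(\Sigma)$ is uncountable. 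An isomorphism between a countable and an uncountable group is impossible, so $X$ must also be of infinite type.

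With $X$ infinite type, I would exploit the algebraic characterization of Dehn twists recalled above. Both $\Map(\Sigma)$ and $\Mod(X)$ contain all finitely supported mapping classes (finite support implies always quasiconformal by Theorem~\ref{thm: finite support}, hence in particular quasiconformal with respect to $X$), so the framework of \cite{BRD} applies to each, and the algebraic twist condition is a purely group-theoretic property preserved by $\rho$. In $\Map(\Sigma)$ this condition singles out exactly the Dehn twists, so $\rho$ carries the set of Dehn twists of $\Sigma$ bijectively onto the set of algebraic-twist elements of $\Mod(X)$ --- a set that contains every Dehn twist of $X$. Since two Dehn twists commute precisely when their defining curves can be realized disjointly, $\rho$ respects the incidence structure of curves, and by the rigidity of the curve graph for infinite-type surfaces I would upgrade this to a homeomorphism $h \colon \Sigma \to X$ with $\rho(D_\gamma) = D_{h(\gamma)}^{\pm 1}$ for every essential simple closed curve $\gamma$.

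Once $\rho$ is pinned down on Dehn twists, I would recover it completely. For any $g \in \Map(\Sigma)$ and any curve $\gamma$, applying $\rho$ to $g D_\gamma g^{-1} = D_{g(\gamma)}$ gives $D_{h(g(\gamma))}^{\pm 1} = \rho(g)\, D_{h(\gamma)}^{\pm 1}\, \rho(g)^{-1}$, so $\rho(g)$ acts on the isotopy classes of curves of $X$ exactly as $h g h^{-1}$ does. Because the action of $\Map(X)$ on isotopy classes of simple closed curves is faithful, this forces $\rho(g) = h g h^{-1}$; that is, $\rho$ is the geometric isomorphism $\Map(\Sigma) \to \Map(X)$ induced by $h$, whose image is all of $\Map(X)$. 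But $X$ is of infinite type, so by Theorem~\ref{thm: never qc} it supports never quasiconformal mapping classes, which in particular are not quasiconformal with respect to $X$ and therefore lie in $\Map(X) \setminus \Mod(X)$. Hence $\Mod(X) \subsetneq \Map(X) = \rho(\Map(\Sigma))$, contradicting that $\rho$ is onto $\Mod(X)$.

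The main obstacle is the middle step: promoting the inclusion ``Dehn twists $\subseteq$ algebraic-twist elements'' (all that \cite{BRD} guarantees for a proper subgroup such as $\Mod(X)$) to an exact identification, so that $\rho$ genuinely sends Dehn twists to Dehn twists of $X$, and then invoking curve-graph rigidity in the infinite-type setting to realize the resulting curve bijection by a homeomorphism. The faithfulness of the action on curves used at the end is standard for infinite-type surfaces, but the identification of the algebraic-twist elements inside $\Mod(X)$ with honest Dehn twists is the delicate point on which the whole argument turns.
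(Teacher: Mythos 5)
Your opening reduction (if $X$ has finite type then $\Mod(X)$ is countable while $\Map(\Sigma)$ is uncountable) is exactly the paper's first step. But the core of your argument rests on a step that does not hold, and you correctly sense this yourself: you need $\rho$ to carry Dehn twists of $\Sigma$ to (powers of) Dehn twists of $X$, i.e.\ you need the algebraic-twist elements of $\Mod(X)$ to be precisely the Dehn twists. The result of \cite{BRD} gives this identification only when the ambient group is the \emph{full} mapping class group; for a proper subgroup $G$ containing all finitely supported classes, such as $\Mod(X)$, it gives only the inclusion $\{\text{Dehn twists}\} \subseteq \{\text{algebraic-twist elements}\}$, because the maximality condition defining the algebraic twist property is relative to the ambient group and can be satisfied by more elements when the group is smaller. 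The paper is explicit about this asymmetry: ``the image of a Dehn twist is not necessarily a Dehn twist.'' Without the exact identification, your curve-graph rigidity step cannot even begin --- there is no induced bijection on curves to which rigidity could be applied --- so the homeomorphism $h$, and with it your entire endgame ($\rho$ induced by $h$, hence surjecting onto $\Map(X) \supsetneq \Mod(X)$), is unsupported.

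The paper's proof avoids this trap by only ever using the direction that is actually available: a Dehn twist $T_\alpha \in \Mod(X)$ satisfies the algebraic twist condition in $\Mod(X)$, hence its preimage $\rho^{-1}(T_\alpha)$ satisfies this purely group-theoretic condition in $\Map(\Sigma)$, where the condition characterizes twists exactly; so \emph{preimages} of Dehn twists of $X$ are honest Dehn twists of $\Sigma$. It then works entirely on the $X$ side: choose pairs $(\alpha_i,\beta_i)$ filling pairwise disjoint finite-type subsurfaces $S_i$ of $X$, build partial pseudo-Anosovs $\phi_i \in \langle T_{\alpha_i}, T_{\beta_i}\rangle$ whose infinite composition is never quasiconformal as in Theorem \ref{thm: never qc}, and use the commutation criterion for Dehn twists (applied to the preimages, which \emph{are} twists) to conclude that the $\rho^{-1}(\phi_i)$ have pairwise disjoint supports. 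The infinite composition $g$ of the $\rho^{-1}(\phi_i)$ is then a well-defined element of $\Map(\Sigma)$, and $\rho(g)$ produces a never-qc class inside $\Mod(X)$, a contradiction. So the fix for your proposal is not to push Dehn twists forward and invoke curve-graph rigidity, but to pull specific configurations back; that is both weaker (no homeomorphism between $\Sigma$ and $X$ is produced, and none is needed) and sufficient.
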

  
  \begin{proof}
  Suppose for the sake of contradiction there exists an isomorphism. If $X$ is finite type, then we are done since $MCG(\Sigma)$ is uncountable and $Mod(X)$ is countable. Now we proceed with the case that $X$ is infinite type.

 Now, by the remark above any Dehn twist $T_\alpha \in \Mod(X)$ also satisfies the algebraic twist property. In particular, we see that the pre-image under $\rho$ of a Dehn twist in $\Mod(X)$ is a Dehn twist in $\Map(\Sigma)$. Note that the image of a Dehn twist is not necessarily a Dehn twist. Two Dehn twists commute if and only if they have disjoint or coinciding support. Let $(\alpha_i, \beta_i)$ be a collection of pairs of curves on $X$ such that each pair fills a finite type subsurface $S_i$ with $\lvert \chi (S_i) \rvert > 3$ and $S_i \cap S_j = \emptyset$ for $i \neq j$. We use each pair to build a partial pseudo-Anosov $\phi_i \in \langle T_{\alpha_i}, T_{\beta_i}\rangle$ 
(using e.g. Thurston's construction) such that their composition is never qc as in Theorem \ref{thm: never qc}.
We now check that the preimages $\{\rho^{-1}(\phi_i)\}$ have pairwise disjoint support. This is a consequence of the following: the preimage of any Dehn twist under $\rho$ is a Dehn twist in $\Map(\Sigma)$, two Dehn twists commute if and only if they have disjoint or coinciding support, and our choice of disjoint pairs of intersecting curves $(\alpha_i, \beta_i)$.
Now the infinite composition, say $g$, of the maps $\rho^{-1}(\phi_i)$ is well defined in $\Map(\Sigma)$ and $\rho(g)$ is a never qc map. This contradicts $\Mod(X)$ being the image of $\rho$.
  \end{proof}

  \begin{thm}\label{thm: MCG of finite type} 
  Let $\Sigma$ be any topological surface.  If $\Map(\Sigma)$ acts faithfully on the  Teichm\"uller space $\mathcal{T}(X)$  and is  orbit equivalent to 
  $\text{Mod}(X)$ for some Riemann surface $X$, then $\Sigma$ is of finite type. 
  \end{thm}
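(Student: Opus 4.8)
The plan is to show that the hypotheses force the representation underlying the action to be a group isomorphism $\Map(\Sigma) \cong \Mod(X)$, and then to invoke Lemma \ref{lem: no isomorphism} to rule out infinite type.

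First I would make explicit that a faithful action of $\Map(\Sigma)$ on $\mathcal{T}(X)$ that is orbit equivalent to $\Mod(X)$ is exactly an action of the form treated in Lemma \ref{lem: group action on Teich}: it is induced by a representation $\rho\colon \Map(\Sigma) \to \Map(X)$ via $g \ast [q\colon X \to Y] = [q \circ \rho(g^{-1})]$, and for this formula to land back in $\mathcal{T}(X)$ each $\rho(g)$ must preserve the quasiconformal class, i.e. $\rho(\Map(\Sigma)) \subseteq \Mod(X)$. Applying Lemma \ref{lem: group action on Teich} with $G = \Map(\Sigma)$ then gives $\rho(\Map(\Sigma)) = \Mod(X)$, so $\rho$ is surjective onto $\Mod(X)$.

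Next I would extract injectivity of $\rho$ from faithfulness. The kernel of the action homomorphism $\Map(\Sigma) \to \Homeo(\mathcal{T}(X))$ equals $\rho^{-1}(K)$, where $K$ is the (a priori possibly nontrivial) kernel of the standard $\Mod(X)$-action on $\mathcal{T}(X)$; since $e \in K$ we have $\ker \rho \subseteq \rho^{-1}(K)$. The faithfulness hypothesis says precisely that $\rho^{-1}(K) = \{e\}$, whence $\ker \rho = \{e\}$. Thus $\rho\colon \Map(\Sigma) \to \Mod(X)$ is both injective and surjective, i.e. a group isomorphism. Note this step sidesteps any worry about $K$ being nontrivial in small-complexity cases: we are handed faithfulness of the composite action, not of the $\Mod(X)$-action alone.

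Finally I would argue by contradiction. If $\Sigma$ were of infinite type and $X$ is hyperbolic, Lemma \ref{lem: no isomorphism} asserts that no group isomorphism $\Map(\Sigma) \to \Mod(X)$ exists, contradicting the isomorphism just produced; and if $X$ is non-hyperbolic (so $\Mod(X)$ is countable) no isomorphism is possible either, since $\Map(\Sigma)$ is uncountable for infinite-type $\Sigma$. Hence $\Sigma$ must be of finite type. I expect the only genuine subtlety to lie in the first step --- confirming that an abstract faithful action that is merely orbit equivalent to $\Mod(X)$ really does arise from a representation into $\Mod(X)$, so that Lemma \ref{lem: group action on Teich} is applicable --- rather than in any hard geometry, since the deep input (the algebraic characterization of Dehn twists together with the never-qc construction) has already been packaged into Lemma \ref{lem: no isomorphism}.
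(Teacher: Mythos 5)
Your proof is correct and takes essentially the same route as the paper: apply Lemma \ref{lem: group action on Teich} to upgrade orbit equivalence to $\rho(\Map(\Sigma)) = \Mod(X)$, use faithfulness to get injectivity of $\rho$, and conclude from the resulting isomorphism via Lemma \ref{lem: no isomorphism}. You are somewhat more explicit than the paper in checking that $\rho$ must land in $\Mod(X)$ and in disposing of non-hyperbolic $X$ by a countability argument, but these are details the paper leaves implicit rather than a different approach.
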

  
  \begin{proof}
  Set $G=\Map(\Sigma)$, and let $\rho : G \rightarrow \Map(\Sigma)$  be a faithful representation. If $\rho(G)$ acts on $\mathcal{T}(X)$ then by Lemma  \ref{lem: group action on Teich},
   $\rho(G)=\text{Mod}(X)$ and hence $\Map(\Sigma)$ is isomorphic to 
   $\text{Mod}(X)$. Finally,  by Lemma \ref{lem: no isomorphism}, $\Sigma$ must be of finite  topological type. 
   \end{proof}
   
   As an immediate corollary we have 
   \begin{cor}\label{cor: MCG does not act}
   Suppose $\Sigma$ is a  general topological surface of negative Euler characteristic.  If $\Map(\Sigma)$ is isomorphic to $\Mod(X)$ for some Riemann surface
   $X$ then $\Sigma$ has finite topological type and X is homeomorphic to $\Sigma$.  Moreover, a big mapping class group can not act on any Teichm\"uller
   space with orbits equivalent to modular group orbits. 
   \end{cor}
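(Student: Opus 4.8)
The plan is to read off both assertions from the machinery already assembled, namely Theorem \ref{thm: MCG of finite type} and Lemma \ref{lem: no isomorphism}, supplementing them with one rank computation to constrain $X$ and one appeal to the rigidity of finite type mapping class groups. First I would settle the finite type conclusion for $\Sigma$. Assume $\Map(\Sigma)\cong\Mod(X)$; since $\Sigma$ has negative Euler characteristic it carries a hyperbolic structure, so Lemma \ref{lem: no isomorphism} forbids such an isomorphism whenever $\Sigma$ is of infinite type. Hence $\Sigma$ is finite type, and in particular $\Map(\Sigma)$ is finitely generated and its abelian subgroups have rank at most $3g-3+n$.

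Next I would pin down $X$. Every compactly supported mapping class is quasiconformal, so $\Mod(X)$ contains the group generated by Dehn twists along any disjoint family of essential simple closed curves; finite products of such twists are compactly supported, hence lie in $\Mod(X)$. If $X$ were infinite type this family can be taken infinite, producing a free abelian subgroup of infinite rank inside $\Mod(X)\cong\Map(\Sigma)$, contradicting the finite rank bound just noted. Therefore $X$ is finite type as well, whence $\Mod(X)=\Map(X)$ and the hypothesis becomes an abstract isomorphism $\Map(\Sigma)\cong\Map(X)$ between finite type mapping class groups. I would then invoke Ivanov-type rigidity, that an abstract isomorphism of mapping class groups of finite type surfaces is geometric, to conclude that $X$ is homeomorphic to $\Sigma$, handling the finitely many low-complexity surfaces separately.

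For the ``moreover'' clause I would argue by contraposition from Theorem \ref{thm: MCG of finite type}. Suppose a big mapping class group $\Map(\Sigma)$ acted faithfully on some Teichm\"uller space $\mathcal{T}(X)$ with orbits coinciding with the $\Mod(X)$-orbits; by Lemma \ref{lem: group action on Teich} the image is exactly $\Mod(X)$, and then Theorem \ref{thm: MCG of finite type} forces $\Sigma$ to be of finite type. Since a big mapping class group is by definition of infinite type, no such action can exist.

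The step I expect to be the main obstacle is not the finite type conclusion, which is essentially immediate, but the upgrade to \emph{homeomorphic}: one must both exclude the possibility that $X$ is of infinite type (the rank argument above) and pass from a purely algebraic isomorphism of mapping class groups to a topological equivalence of surfaces. This last passage is where the genuine content sits, as it requires importing the rigidity theory of finite type mapping class groups and correctly dispatching the exceptional low-complexity cases of negative Euler characteristic.
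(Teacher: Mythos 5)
Your proposal is correct and follows the same skeleton as the paper's proof: Lemma \ref{lem: no isomorphism} rules out infinite-type $\Sigma$, rigidity of finite-type mapping class groups identifies $X$ with $\Sigma$, and the ``moreover'' clause is read off from Theorem \ref{thm: MCG of finite type} (via Lemma \ref{lem: group action on Teich}), exactly as the paper does. Where you diverge is the middle step. The paper notes that for finite-type $\Sigma$ one has $\Map(\Sigma)=\Mod(Y)$ for a Riemann surface $Y$ homeomorphic to $\Sigma$, and then applies the algebraic rigidity of finitely generated modular groups \cite{Iv} directly to the isomorphism $\Mod(X)\cong\Mod(Y)$, leaving implicit how a possibly infinite-type $X$ is excluded. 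You instead rule out infinite-type $X$ first, by an explicit rank computation: an infinite disjoint family of essential curves yields an infinite-rank free abelian subgroup of compactly supported (hence always quasiconformal) twists inside $\Mod(X)$, which cannot embed in the finite-type group $\Map(\Sigma)$ by the Birman--Lubotzky--McCarthy bound on abelian subgroups. This makes the reduction to finite-type rigidity self-contained, and it is arguably more careful than the paper's one-line citation. One caution on your subsequent step: the identity $\Mod(X)=\Map(X)$ for finite-type $X$ is automatic only when $X$ is analytically finite (all ends punctures); if $X$ has ideal boundary, a quasiconformal self-map must preserve the partition of ends into cusps and flares, so $\Mod(X)$ may be a proper finite-index subgroup of $\Map(X)$. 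This does not derail your argument---Ivanov's rigidity also covers isomorphisms involving finite-index subgroups of finite-type mapping class groups---but that step should be phrased accordingly rather than as an equality.
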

   
   \begin{proof}
   	Let $\rho$ be an isomorphism from $\Map(\Sigma)$ to $\Mod(X)$ for some Riemann surface $X$. By lemma \ref{lem: no isomorphism} $\Sigma$ can not have infinite topological type. On the other hand,  if $\Sigma$ has finite topological type then $\Map(\Sigma)$ is equal to (and hence)  isomorphic to  $\Mod(Y)$ for a Riemann surface  $Y$  homeomorphic to $ \Sigma$.  But $\rho$ being an isomorphism implies $\Mod(X)$ is isomorphic to $\Mod(Y)$ and the algebraic rigidity of finitely generated modular groups \cite{Iv} implies $X$ is homeomorphic to $Y$.

   	The failiure to act on any Teichm\"uller space follows immediately from \ref{thm: MCG of finite type}.

   \end{proof}

  The question of whether a big mapping class group can act on
  a Teichm\"uller space appears      in \cite{OpQs}.


\section{Relative trichotomy} \label{sec: rel. trichotomies}

In this section we  construct a number of subspaces in $H(\Sigma)$ and illustrate how 
 mapping classes in the trichotomy can change type depending on the subspace. Throughout we make essential use of the lower and upper  bounds of   Theorem 1 in    \cite{Ma1}.  

\subsection{Systoles and multi-twists:}  We define the {\it systole} of a hyperbolic surface to be 
$$
\inf \{\ell_{\gamma}(X): \gamma  \text{ is a closed geodesic} \}.
$$ 
This infimum may or may not exist depending on the surface.  
\label{sec: relative tri}
\begin{prop} \label{prop: multi-twists}

Let $P$ be the property that the hyperbolic surface  $X$ has systole bounded from below by a positive constant.  Consider the subspace 
$\mathcal{C}_{P}  \subset H(\Sigma)$. 
\begin{itemize}
\item For any unbounded sequence    $\{n_i \in \ZZ\}$, the infinite multi-twist 
$ \prod_{i \in I} D_i ^{n_i}$  is never qc relative to $\mathcal{C}_{P}$.
\item  For any bounded sequence    $\{n_i \in \ZZ\}$ with infinitely many $n_i \neq 0$,  the infinite multi-twist 
$ \prod_{i \in I} D_i ^{n_i}$  is sometimes  qc relative to $\mathcal{C}_{P}$.
\end{itemize}
\end{prop}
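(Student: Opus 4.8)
The plan is to feed the systole constraint defining $\mathcal{C}_P$ directly into the two-sided dilatation estimates of Theorem 1 of \cite{Ma1}. Those estimates say that the infinite multi-twist $\prod_i D_i^{n_i}$ admits a quasiconformal representative with respect to a structure $(X,f)$ if and only if the quantity $\sup_i |n_i|\,\ell_{\gamma_i}(X)$ is finite, and they bound the dilatation above and below in terms of this quantity. The key observation driving both bullets is that property $P$ --- systole bounded below by some $\epsilon_X > 0$ --- forces $\ell_{\gamma_i}(X) \geq \epsilon_X$ for \emph{every} $i$, since each $\gamma_i$ is a closed geodesic. Thus the ``make the curves short'' escape route available in the unrestricted setting of Section \ref{sec: trichotomy} is closed off inside $\mathcal{C}_P$, and this is exactly what shifts the type of the multi-twist.

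For the first bullet (unbounded $\{n_i\}$), I would argue never qc relative to $\mathcal{C}_P$ by a direct computation. Fix any $(X,f) \in \mathcal{C}_P$ with systole bound $\epsilon_X$; then
\[
\sup_i |n_i|\,\ell_{\gamma_i}(X) \;\geq\; \epsilon_X \sup_i |n_i| \;=\; \infty ,
\]
since $\{n_i\}$ is unbounded. The lower bound of \cite{Ma1} then forces infinite dilatation, so no qc representative exists with respect to $(X,f)$. As $(X,f) \in \mathcal{C}_P$ was arbitrary, the multi-twist is never qc relative to $\mathcal{C}_P$.

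For the second bullet (bounded $\{n_i\}$, say $|n_i| \leq N$, with infinitely many $n_i \neq 0$), I would exhibit one structure in $\mathcal{C}_P$ on which the map is qc and one on which it is not. For the qc structure, choose $(X,f) \in \mathcal{C}_P$ with all relevant pants curves of bounded length (e.g.\ $\ell_{\gamma_i} = 1$) and twists chosen so the systole is bounded below; then $\sup_i |n_i|\,\ell_{\gamma_i}(X) \leq N < \infty$, and the upper bound of \cite{Ma1} produces a qc representative. For the non-qc structure I want $(Y,g) \in \mathcal{C}_P$ with $\sup_i |n_i|\,\ell_{\gamma_i}(Y) = \infty$; since $\{n_i\}$ is bounded, this requires sending the lengths of curves carrying a nonzero twist to infinity. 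This is the delicate point, and the step I expect to be the main obstacle: lengthening cuffs can create \emph{short} closed geodesics whenever two long cuffs bound a common pair of pants (the seam joining them degenerates, cf.\ Lemma \ref{lem: collar lemma}), which would violate property $P$. To evade this I would lengthen only a carefully chosen infinite subfamily of $\{\gamma_i : n_i \neq 0\}$. Each pants curve shares a pair of pants with at most four others, so the adjacency graph on $\{\gamma_i : n_i \neq 0\}$ has bounded degree, and a greedy selection yields an infinite independent (pairwise non-adjacent) set. Lengthening only these curves, while keeping all remaining coordinates bounded and twists controlled, gives $\sup_i |n_i|\,\ell_{\gamma_i}(Y) = \infty$ (each such $|n_i| \geq 1$) yet keeps the systole bounded below, so the lower bound of \cite{Ma1} rules out a qc representative.

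The remaining verifications are routine: that the bounded-geometry structures written in Fenchel--Nielsen coordinates genuinely have systole bounded below, and that all isolated planar ends may be taken to be cusps, so that both structures lie in $\mathcal{C}_P$. I would dispatch these with the collar lemma, noting that with all relevant cuffs bounded below and the only long cuffs pairwise non-adjacent, no short closed geodesic can form. Together the two structures show the multi-twist is sometimes qc relative to $\mathcal{C}_P$.
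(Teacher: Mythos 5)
Your strategy is the same as the paper's: both bullets are fed into Matsuzaki's two-sided dilatation estimates (Theorem 1 of \cite{Ma1}) after observing that property $P$ forces $\ell_{\gamma_i}(X)\geq \epsilon_X$ for every $i$. Your first bullet coincides with the paper's argument and is correct, as is the qc half of the second bullet (there, in fact, no care with twists is needed: if every pants curve has length $1$, then every essential simple closed geodesic is either a cuff or crosses the definite-width collar of a cuff, and non-simple closed geodesics have a universal length lower bound, so the systole bound is automatic). You also deserve credit for noticing what the paper's own proof glosses over: the paper merely asserts that structures in $\mathcal{C}_P$ with $\ell_{\gamma_i}\to\infty$ exist, whereas you correctly flag that lengthening cuffs threatens the systole bound, and your independent-set trick does eliminate the degeneration coming from two adjacent long cuffs.

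The gap is in the step you declare routine. The claim that ``with all relevant cuffs bounded below and the only long cuffs pairwise non-adjacent, no short closed geodesic can form,'' justified by the collar lemma, is false without a hypothesis on the twists of the lengthened curves. Concretely, let $P$ be a pair of pants with cuff lengths $(L,1,1)$. The geodesic arc from the long cuff back to itself that wraps around one unit cuff has length $h(L)$ satisfying $\cosh\bigl(h(L)/2\bigr)=\sinh(1/2)\,\sinh s$, where $s$ is the seam joining the long cuff to the other unit cuff; since $\cosh s\to\coth(1/2)$ as $L\to\infty$, we get $\sinh s \to 1/\sinh(1/2)$, hence $\cosh\bigl(h(L)/2\bigr)\to 1$ and $h(L)\to 0$. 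Now glue two isometric copies of $P$ along their long cuffs with the twist chosen so that the feet of these two perpendicular arcs coincide. Each arc meets the cuff at right angles, so their union is a smooth \emph{simple closed geodesic} of length $2h(L)\to 0$ that crosses only the long cuff. This curve is invisible to your argument: it crosses no bounded cuff, and the collar of the long cuff it does cross is itself degenerating, so the collar lemma gives no lower bound. Thus a structure built exactly as you describe can fail to lie in $\mathcal{C}_P$ for bad twist choices; the verification genuinely requires choosing the twists on the lengthened curves (e.g.\ half twists, as for the structures in Section \ref{sec: The space of complete structures is not convex} and in \cite{BHS}) and then bounding below the length of every geodesic crossing a long cuff. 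That twist-dependent estimate is the real content of the step, and it is missing from your proposal (and, to be fair, from the paper's proof as well).
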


\begin{proof}
We use  Theorem 1 inequalities of \cite{Ma1} throughout this proof. 
For an unbounded sequence, since the length of any closed geodesic is bounded from below, the Matsuzaki  lower bound 
goes to infinity.  

For a bounded sequence,  there exists 
$X \in \mathcal{C}_{P}$ with  the lengths of the $\gamma_{i}$ equal to  $1$. In this case, the  Matsuzaki  upper bound  is finite.  On the other hand, if 
the lengths of the $\gamma_{i}$ go to infinity then there is no qc representative 
for the multi-twist. 
\end{proof}

\begin{lem} \label{lem: uncountably many}
Let $(X,f)$ be a hyperbolic structure. If there is an infinite multi-twist
$\phi$ that is qc with respect to $(X,f)$, then there are uncountably many 
multi-twists which are qc with respect to $(X,f)$.
\end{lem}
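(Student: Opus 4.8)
The plan is to exploit the monotonicity of Matsuzaki's dilatation estimate under ``thinning out'' the twisting data. Write $\phi = \prod_{i \in I} D_i^{n_i}$, where $D_i$ is the Dehn twist about a disjoint, pairwise non-homotopic family of essential curves $\{\gamma_i\}$, and set $I_0 = \{i \in I : n_i \neq 0\}$, which is infinite by the standing hypothesis on infinite multi-twists. Since $\phi$ is qc with respect to $(X,f)$, the upper bound in Theorem 1 of \cite{Ma1}, applied to the lengths $\ell_{\gamma_i}(X)$ together with the powers $n_i$, is finite.

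First I would observe that for every subset $J \subseteq I_0$ the sub-multi-twist $\phi_J := \prod_{i \in J} D_i^{n_i}$ has twisting coefficients $m_i$ with $|m_i| \leq |n_i|$ for all $i$ (namely $m_i = n_i$ for $i \in J$ and $m_i = 0$ otherwise). Because Matsuzaki's upper bound on the dilatation is increasing in each $|n_i|$, zeroing out some of the coefficients cannot increase it; hence the upper bound for $\phi_J$ is no larger than that for $\phi$, and in particular finite. Thus each $\phi_J$ admits a qc representative with respect to $(X,f)$.

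Next I would verify that distinct subsets give distinct mapping classes. Since the $\gamma_i$ are disjoint, essential, and pairwise non-homotopic, the Dehn twists $D_i$ generate a free abelian group with the $D_i$ as a basis, so a multi-twist along $\{\gamma_i\}$ is determined by its vector of twisting coefficients and two such agree iff their coefficient vectors agree. As $n_i \neq 0$ for every $i \in I_0$, the coefficient vectors of $\phi_J$ and $\phi_{J'}$ differ whenever $J \neq J'$. (If one insists that each qc element be an \emph{infinite} multi-twist in the sense of the definition, one restricts to infinite subsets $J \subseteq I_0$; since the finite subsets of $I_0$ are only countably many, uncountably many infinite subsets remain.)

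Finally, because $I_0$ is infinite its power set is uncountable, so the family $\{\phi_J : J \subseteq I_0\}$ consists of uncountably many distinct multi-twists, each qc with respect to $(X,f)$, which is the assertion. The only step requiring genuine care is the monotonicity of the upper bound in Matsuzaki's estimate under reducing the $|n_i|$; once that is in hand, the cardinality count is immediate, so I expect the bulk of the work to be confirming that the relevant quantity controlling the upper bound (e.g.\ the supremum of the $|n_i|\,\ell_{\gamma_i}(X)$) can only decrease when coefficients are set to zero.
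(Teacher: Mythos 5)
The paper states Lemma \ref{lem: uncountably many} without proof, so there is no in-paper argument to compare against; your proposal is correct and supplies exactly the argument the lemma needs. Passing to sub-multi-twists $\phi_J = \prod_{i \in J} D_i^{n_i}$ over subsets $J$ of the infinite support, checking that each remains qc via monotonicity of the upper bound in Theorem 1 of \cite{Ma1}, and then counting subsets is also precisely what the surrounding text relies on: immediately after the lemma the authors use it to conclude that $\mathcal{C}_{Q} \subset \mathcal{C}_{P}$, which requires exactly that sub-multi-twists of a qc multi-twist remain qc. Your one flagged point of care --- that the quantity controlling the upper bound is $\sup_i |n_i|\,\ell_{X}(f(\gamma_i))$, which can only decrease when coefficients are set to zero --- is consistent with every application of Matsuzaki's estimate made elsewhere in the paper (the sometimes-quasiconformal proposition in Section \ref{sec: trichotomy} and Proposition \ref{prop: multi-twists}), where finiteness of this supremum yields a qc representative and its divergence obstructs one. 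The two remaining ingredients are standard and you handle them correctly: distinct coefficient vectors along a disjoint, pairwise non-homotopic family of essential curves give distinct mapping classes (apply the two multi-twists to a curve meeting only one $\gamma_i$ with $i$ in the symmetric difference, or invoke the Alexander method for infinite-type surfaces), and an infinite index set has uncountably many infinite subsets, so the count survives even under the stricter reading in which each exhibited class must itself be an infinite multi-twist.
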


Next, let $Q$ be the property that the Teichm\"uller modular group is countable.
Such hyperbolic surfaces exist (see \cite{Ma2}, \cite{Her}) for  any  topological type 
$\Sigma$.  Consider the subspace $\mathcal{C}_{Q} \subset H(\Sigma)$, and note that  $\mathcal{C}_{Q} \subset \mathcal{C}_{P}$ by Lemma  
\ref{lem: uncountably many}.

Any infinite  multi-twist
$ \prod_{i \in I} D_i ^{n_i}$   is  never qc relative to $\mathcal{C}_{Q}$. 
In particular, all  of the bounded sequences  from Proposition 
\ref{prop: multi-twists}  that were sometimes qc 
relative to  $\mathcal{C}_{P}$ have become never qc with respect to 
$\mathcal{C}_{Q}$. 
   
   These multi-twist examples underscore the principle that if  we have nested subspaces $\mathcal{C}_{1}  \subset \mathcal{C}_{2}  \subset H(\Sigma)$
   then 
 $$\{\phi  \in \Map (\Sigma): \text{sometimes qc rel. to } \mathcal{C}_{1}\}
 \subset \{\phi  \in \Map (\Sigma): \text{sometimes qc rel. to } \mathcal{C}_{2}\}
 $$
and 
$$\{\phi  \in \Map (\Sigma): \text{never  qc rel. to } \mathcal{C}_{2}\}
 \subset \{\phi  \in \Map (\Sigma): \text{never  qc rel. to } \mathcal{C}_{1}\}
 $$

 \subsection{Nested $\Map$-invariant subspaces:} 
 In this section, we construct a nested decreasing sequence of subspaces 
 in $H(\Sigma)$ so that  for each adjacent pair subspaces 
  there is a mapping class  which is sometimes qc relative to the bigger subspace but  never qc with respect to the smaller one. 
 
 Fix $\Sigma$ an infinite type topological surface. We say that 
 a hyperbolic surface has a {\it   bounded pants decomposition}  if it has a pants decomposition for which the pants curves have length bounded above and below    by  positive constants. If there is only a bound from above we say the pants decomposition is  {\it upper bounded}.   Recall that $\mathcal{C}$ denotes the geodesically complete structures on $\Sigma$.

 For $r \in \mathbb{N} \cup \{\frac{1}{n}\}_{n=1}^{\infty}$, consider the  subspace   
 $\mathcal{D}_{r} \subset   \mathcal{C}$ of hyperbolic structures  
 $(X,f)$ where   $\Sigma$ has  a  pants decomposition $\{\gamma_{m}\}_{m=1}^{\infty}$ and there are constants $c,C>0$ so that   
$$
c \frac{1}{m^{1/r}} \leq \ell_{X}(f(\gamma_m)) \leq C
$$
for  all $m$. 
 
 Note that such a pants curve condition is a quasiconformal invariant  by Wolpert's lemma (see the references in \cite{Bus}) and these subspaces form a nested decreasing sequence
 $$
 \mathcal{C} \supset  \mathcal{D}_{0} \supset  \cdots  \supset
 \mathcal{D}_{\frac{1}{n}} \supset \cdots \supset   
 \mathcal{D}_{\frac{1}{2}}   \supset 
   \mathcal{D}_{1}    \supset  \cdots  \supset \mathcal{D}_{n} \supset \cdots
 $$

 \begin{thm}  For $r \in \mathbb{N} \cup \{\frac{1}{n}\}_{n=1}^{\infty}$,  the subspace $\mathcal{D}_{r}$ is  a locally path connected, connected  $\Map$-invariant subspace  where  
 \begin{enumerate}
 \item $\mathcal{D}_{\infty}:=\cap_{n=1}^{\infty} \mathcal{D}_{n}$ 
  is  locally path connected, connected and 
  $$\mathcal{D}_{\infty}=\{(X,f) \in \mathcal{C}: X 
 \text{ has a  bounded pants decomposition} \}
 $$

  \item $\mathcal{D}_{0}:=\cup_{n=1}^{\infty} \mathcal{D}_{\frac{1}{n}}$
  is  locally path connected, connected  and 
 $$\mathcal{D}_{0}=\{(X,f) \in \mathcal{C}: X 
 \text{ has an upper   bounded pants decomposition} \}
 $$
 \item For each $n=1,2,3,...$, there exists an element  $\phi$ in 
 $\Map$ so that $\phi$ relative to  $\mathcal{D}_{n}$ is sometimes qc but relative to $\mathcal{D}_{n+1}$ is never qc. 
 Similarly,   there exists an element  $\phi$ in 
 $\Map$ so that $\phi$ relative to  $\mathcal{D}_{\frac{1}{n+1}}$ is sometimes qc but relative to $\mathcal{D}_{\frac{1}{n}}$ is never qc. 

 \end{enumerate}
 \end{thm}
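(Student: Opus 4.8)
The plan is to treat the three claims in turn, obtaining connectivity and $\Map$-invariance from the general apparatus already developed and concentrating the real work on (3). For the connectivity and invariance of each $\mathcal{D}_r$, I would first note that its defining condition---existence of a geodesic pants decomposition of the underlying surface with lengths obeying $c\,m^{-1/r}\le \ell\le C$---is a property of the hyperbolic surface $X$ alone and is preserved by quasiconformal maps, since by Wolpert's inequality (\cite{Bus}) a $K$-qc map distorts lengths by at most a bounded factor and hence merely rescales the constants $c,C$. Writing $P_r$ for this qc-invariant property, one has $\mathcal{D}_r=\mathcal{C}_{P_r}$, so Theorem \ref{thm: property P connectivity} gives local path connectedness and connectedness at once; the identical reasoning applies to $\mathcal{D}_0$ and $\mathcal{D}_\infty$ after they are identified (below) with $\mathcal{C}_P$ for the qc-invariant properties ``upper bounded'' and ``bounded'' pants decomposition. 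Finally, because the action $\phi\cdot(X,f)=(X,f\circ\phi^{-1})$ leaves the surface $X$ unchanged while $P_r$ depends only on $X$, every $\mathcal{D}_r$ is automatically $\Map$-invariant.

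For the identifications in (1) and (2) I would argue by comparing the polynomial exponents. The inclusions $\supseteq$ are immediate: a decomposition with $a\le \ell\le C$ lies in every $\mathcal{D}_n$ (take $c=a$, using $m^{-1/n}\le 1$), and an upper-bounded one lies in some $\mathcal{D}_{1/n}$. For the reverse inclusions the tool is the classical collar lemma (\cite{Bus}): a simple closed geodesic of length below $2\,\mathrm{arcsinh}\!\big(1/\sinh(C/2)\big)$ cannot cross any curve of length $\le C$, hence is isotopic to a decomposition curve. Thus in any upper-bounded decomposition every sufficiently short geodesic is forced to be one of its curves, which pins the decomposition down up to finitely many curves and makes the optimal lower-bound exponent intrinsic; letting the exponent $1/r$ tend to $0$ (resp. stay bounded) then yields the bounded (resp. upper-bounded) description. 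The one delicate point is reconciling the index $m$ in $m^{-1/r}$ with the enumeration of the forced curves, which I would handle by sorting the lengths in decreasing order and comparing counting functions.

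The substance of the theorem is (3), and here the plan is to take $\phi$ to be an infinite multi-twist $\phi=\prod_m D_{\gamma_m}^{\,k_m}$ along the curves of a fixed (full) pants decomposition $\{\gamma_m\}$ of $\Sigma$, with $k_m=\lfloor m^{1/n}\rfloor$ in the integer case and $k_m=m^{\,n+1}$ in the fractional case. The analytic engine is Matsuzaki's Theorem 1 in \cite{Ma1}, whose two-sided dilatation bounds show that, for length coordinates bounded above, $\phi$ has a qc representative with respect to a structure with lengths $\{\ell_m\}$ if and only if $\sup_m|k_m|\,\ell_m<\infty$. The \emph{sometimes} direction on the larger subspace is then exhibited by two explicit choices: assigning the minimal allowed lengths $\ell_m\asymp m^{-1/n}$ (resp. $m^{-(n+1)}$) makes $|k_m|\ell_m$ bounded and, with half-twists, yields a geodesically complete structure in that subspace (short curves force the ends infinitely far, cf. \cite{BHS},\cite{BaSa}), giving a qc instance; while the constant assignment $\ell_m\equiv 1$ makes $|k_m|\ell_m\to\infty$, giving a non-qc instance.

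The \emph{never} direction on the smaller subspace is the step I expect to be hardest. Suppose some $X$ in $\mathcal{D}_{n+1}$ (resp. $\mathcal{D}_{1/n}$) admitted a qc representative of $\phi$; then $\sup_m|k_m|\,\ell_X(\gamma_m)<\infty$, so $\ell_X(\gamma_m)\lesssim m^{-1/n}$ (resp. $m^{-(n+1)}$) and in particular all but finitely many $\gamma_m$ are short. By the collar-forcing argument each such short $\gamma_m$ lies in any upper-bounded decomposition of $X$; since $\{\gamma_m\}$ is itself a maximal decomposition, the witnessing decomposition for $X\in\mathcal{D}_{n+1}$ must coincide with $\{\gamma_m\}$ up to finitely many curves. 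Hence the sorted lengths of that decomposition decay like $i^{-1/n}$ (resp. $i^{-(n+1)}$), whereas membership in $\mathcal{D}_{n+1}$ (resp. $\mathcal{D}_{1/n}$) forces them to satisfy $\ell_{(i)}\gtrsim i^{-1/(n+1)}$ (resp. $i^{-n}$); as $\tfrac1n>\tfrac1{n+1}$ (resp. $n+1>n$) these decay rates are incompatible for large $i$, the desired contradiction. The crux is making this forcing uniform over \emph{all} structures of the subspace and, as in (2), aligning the enumeration of the twisting curves with that of the witnessing decomposition; sorting lengths decreasingly and comparing counting functions is the mechanism I would use to complete the argument.
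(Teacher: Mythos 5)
Your proof of item (3), together with the connectivity and $\Map$-invariance claims, is essentially the paper's own argument: the same infinite multi-twist $\phi=\prod_m D_{\gamma_m}^{k_m}$ with $k_m\approx m^{1/n}$ (resp.\ $m^{n+1}$), the same two test structures in the larger subspace ($\ell_m=m^{-1/n}$, so $k_m\ell_m$ is bounded and Matsuzaki's upper bound yields a qc representative; $\ell_m\equiv\mathrm{const}$, so $k_m\ell_m\to\infty$ and his lower bound forbids one), and the same collar-lemma input for the never-qc direction. Your sorted-lengths/counting-function device handles the re-indexing between $\{\gamma_m\}$ and the witnessing decomposition more carefully than the paper's dichotomy (``eventually the same'' versus infinitely many transverse curves), and $\lfloor m^{1/n}\rfloor$ repairs the paper's non-integral twist exponents; these are improvements, not departures.

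The genuine gap is in items (1) and (2). The inclusion you call immediate --- that an upper-bounded decomposition lies in some $\mathcal{D}_{1/n}$ --- is false: take decomposition lengths $\ell_m=e^{-e^m}$. By your own collar-forcing observation, any upper-bounded decomposition of such a surface contains these curves up to finitely many others, and for fixed $c>0$ and $n$ the number of its curves of length at least $c\,i^{-n}$ grows like $\log\log i$, whereas an enumeration satisfying $\ell_i\ge c\,i^{-n}$ forces at least $i$ curves of length at least $c\,i^{-n}$; no re-indexing can help, so this surface lies in no $\mathcal{D}_{1/n}$ although it has an upper-bounded decomposition. Dually, the reverse inclusion in (1) also fails as literally stated: the structure with decomposition lengths $\ell_m=1/\log(m+2)$ lies in every $\mathcal{D}_n$ (the same decomposition works for each $n$ separately, with $c(n)=\inf_m m^{1/n}/\log(m+2)>0$), yet collar forcing shows it has no bounded pants decomposition, since any upper-bounded decomposition must contain the $\gamma_m$, whose lengths tend to $0$. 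So the two set equalities cannot be proved by your route --- they are false as unquantified statements --- and the sorting/counting mechanism you invoke to close (1) and (2) is precisely what produces these counterexamples rather than the desired identities.

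What is true, and evidently intended, is the uniform reading: if a single decomposition and a single pair of constants $(c,C)$ witness membership in $\mathcal{D}_n$ for every $n$ simultaneously, then letting $n\to\infty$ in $\ell_m\ge c\,m^{-1/n}$ gives $\ell_m\ge c$, i.e.\ a bounded decomposition; and $\mathcal{D}_0$ must simply be defined either as the union or as $\mathcal{C}_P$ for $P$ the property of having an upper-bounded decomposition, since these two sets differ. To be fair, the paper's own proof of (1) and (2) is only the assertion that they ``follow from the construction,'' so your proposal inherits a defect of the statement itself; but since you committed to a concrete argument, the failure point should be flagged: either restate (1)--(2) with constants uniform in $n$, or weaken the equalities to the inclusions that do hold.
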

 
 \begin{proof} The local path connected, connectedness of each of the subspaces follows from Theorem \ref{thm: property P connectivity} since the 
 pants decomposition condition  for each subspace is a quasi-invariant quantity, and the characterization of $\mathcal{D}_{0}$ and 
 $\mathcal{D}_{\infty}$ in items (1) and (2) follow from the construction. 
 
 To prove item (3) fix a positive integer $n$ and a pants decomposition 
 $\{\gamma_{m}\}$ of $\Sigma$.  Define $\phi \in \Map (\Sigma)$ to be  the multi-twist about the pants curves  where the twist about $\gamma_m$ is
 $n_{\gamma_m}:=m^{\frac{1}{n}}$. We consider the action of $\phi$ on 
 $\mathcal{D}_{n}$. Let the hyperbolic structure $(X,f) \in  \mathcal{D}_{n}$,   be such  that 
 $$
 \ell_{X}(f(\gamma_{m})) = \frac{1}{m^{\frac{1}{n}}}, \text{ for each } m.
 $$
 Then since  $n_{\gamma_m} \ell_{X}(f(\gamma_{m}))=1$ for each $m$,
  by Matsuzaki's upper bound condition (Theorem 1 in    \cite{Ma1})
  we see that $\phi$ is qc with respect to the hyperbolic structure $(X,f)$.  On the other hand, for the hyperbolic structure $(Y,g) \in  \mathcal{D}_{n}$, where $\ell_{Y}(g(\gamma_{m}))=C$ $\phi$ is qc since 
  $$
  n_{\gamma_m} \ell_{Y}(g(\gamma_{m}))=m^{\frac{1}{n}} C
  \rightarrow \infty, \text{ as } m \rightarrow \infty
  $$
  by Matsuzaki's lower bound inequality (Theorem 1 in    \cite{Ma1})
  we have that $\phi$ is not qc with respect to $(Y,g)$. Hence we have shown $\phi$ relative to $\mathcal{D}_{n}$
  is sometimes qc.
  
  Next consider the action of $\phi$ on $\mathcal{D}_{n+1}$.  If 
  $(X,f) \in \mathcal{D}_{n+1}$, then by definition there exists a pants decomposition $\{\beta_{m}\}$ of $\Sigma$ so that 
  $$
   c \frac{1}{m^{\frac{1}{n+1}}}  \leq \ell_{X} (f(\beta_{m})) \leq C
  $$
  
 There are two cases to consider:
 \begin{enumerate}
 \item  The multi-twist curves $\{\gamma_m \}$ are eventually the same as 
  the pants curves $\{\beta_{m}\}$, then we have 
  $$
  n_{\gamma_{m}} \ell_{X} (f(\gamma_{m})) 
  \geq          \frac{c m^{\frac{1}{n}}}{m^{\frac{1}{n+1}}} \rightarrow \infty,
  \text{ as } m \rightarrow \infty.
  $$
  Hence by Matsuzaki (Theorem 1 in    \cite{Ma1})  $\phi$  is not qc with respect to $(X,f)$. The other possibility is that the pants decomposition $\{\beta_{m}\}$ 
 \item  The multi-twist curves $\{\gamma_m \}$ are not  the same as 
  the pants curves $\{\beta_{m}\}$ for $m$ large enough. But  then there infinitely many  pants curves $\{\beta_{m} \}$  transverse to the $\{\gamma_{m}\}$. Then since $\ell_{X} (f(\beta_{m})) \leq C$, we have 
  $\ell_{X} (f(\gamma_{m})) \geq K$, for some $K$.  Then as in the previous case, 
  $$
    m^{\frac{1}{m}} K  \leq n_{\gamma_{m}} \ell_{X} (f(\gamma_{m})) 
    \rightarrow  \infty, \text{ as } m \rightarrow \infty. 
  $$
 \end{enumerate}
 
 Thus $\phi$ relative to $\mathcal{D}_{n+1}$ is never qc. 
 
 The argument for the reciprocal integers is the same. We leave the details to the reader. 
 \end{proof}
 
 \begin{rem}
 The lower bound functions in the above theorem have been chosen for convenience as there are many other functions that could have been used 
 to achieve  the  construction. 
 \end{rem}

%
%
%

\bibliographystyle{plain} 
\bibliography{references} 

\end{document}